\newtheorem{thm}{Theorem}[section]
\newtheorem{rmk}{Remark}[section]
\newtheorem{example}{Example}[section]
\newtheorem{proposition}{Proposition}[section]
\newtheorem{definition}{Definition}[section]
\newproof{pf}{Proof}
\numberwithin{equation}{section}
\numberwithin{figure}{section}
\numberwithin{table}{section}
\newcommand\diag{\mathrm{diag}}
\newcommand\dd{\mathrm{d}}
\newcommand\abs[1]{\lvert #1 \rvert}
\newcommand\pro[2]{\langle{#1},{#2}\rangle}
\newcommand\pd[2]{\dfrac{\partial {#1}}{\partial {#2}}}
\newcommand\be{\bm{e}}
\newcommand\bF{\bm{F}}
\newcommand\bG{\bm{G}}
\newcommand\ux{u^x}
\newcommand\uy{u^y}
\newcommand\uz{u^z}
\newcommand\vx{v_1}
\newcommand\vy{v_2}
\newcommand\vz{v_3}
\newcommand\mx{m_1}
\newcommand\my{m_2}
\newcommand\mz{m_3}
\newcommand\bx{b^x}
\newcommand\by{b^y}
\newcommand\bz{b^z}
\newcommand\Bx{B_1}
\newcommand\By{B_2}
\newcommand\Bz{B_3}
\newcommand\bs{\bm{s}}
\newcommand\br{\bm{r}}
\newcommand\bv{\bm{v}}
\newcommand\bw{\bm{w}}
\newcommand\bW{\bm{W}}
\newcommand\bV{\bm{V}}
\newcommand\bU{\bm{U}}
\newcommand\pt{p_\text{tot}}
\newcommand\bb{\abs{b}^2}
\newcommand\BB{\abs{\bm{B}}^2}
\newcommand\divB{\nabla\cdot{\bm{B}}}
\newcommand\vB{\bv\cdot\bm{B}}
\newcommand\parc{\nabla_{\alpha}}
\newcommand\xl{{i-\frac12}}
\newcommand\xr{{i+\frac12}}
\newcommand\yl{{j-\frac12}}
\newcommand\yr{{j+\frac12}}
\newcommand\jump[1]{\llbracket #1 \rrbracket}
\newcommand\mean[1]{\{\!\!\{ #1 \}\!\!\}}
\newcommand\meanln[1]{\{\!\!\{ #1 \}\!\!\}^{\text{ln}}}
\begin{document}

\begin{frontmatter}

  \title{High-order accurate entropy stable nodal discontinuous Galerkin schemes
  for the ideal special relativistic magnetohydrodynamics}

  \author{Junming Duan}
  \ead{duanjm@pku.edu.cn}
  \author{Huazhong Tang\corref{cor1}}
  \ead{hztang@math.pku.edu.cn}
  \address{Center for Applied Physics and Technology, HEDPS and LMAM,
    School of Mathematical Sciences, Peking University, Beijing 100871, P.R. China}
  \cortext[cor1]{Corresponding author. Fax:~+86-10-62751801.}

  \begin{abstract}
    This paper studies  high-order accurate entropy stable nodal discontinuous Galerkin (DG)
    schemes for the ideal special relativistic magnetohydrodynamics (RMHD).
    It is built on the  modified RMHD equations with a   particular source term,
    which is analogous to the Powell's eight-wave formulation and can be  symmetrized so that
   an ``entropy   pair''  is obtained.
    We design an affordable ``fully
    consistent'' two-point entropy conservative flux, which is not only consistent
    with the physical flux, but also maintains the zero parallel magnetic component,
    and then construct high-order accurate semi-discrete
    entropy stable DG schemes   based on the quadrature rules and the entropy conservative and stable fluxes.
   They satisfy the semi-discrete ``entropy inequality''
    for the given ``entropy pair'' and are integrated in time by using the high-order
    explicit strong stability preserving  Runge-Kutta schemes to get further the fully-discrete nodal DG schemes.
 %
    Extensive numerical tests are conducted to validate the accuracy and the
    ability to capture discontinuities of our schemes.
    Moreover, our entropy conservative flux is compared to an existing flux
    through some numerical tests. The results show that the zero parallel magnetic
    component in the numerical flux can help to decrease the error in the parallel magnetic component
    in one-dimensional tests, but  two entropy conservative fluxes give similar
    results since the error in  the magnetic field divergence
   seems dominated  in the two-dimensional tests.
  \end{abstract}

  \begin{keyword}
    Entropy conservative flux\sep entropy stable scheme\sep discontinuous Galerkin scheme
    \sep high-order accuracy\sep special relativistic magnetohydrodynamics
  \end{keyword}

\end{frontmatter}

\section{Introduction}
This paper is concerned with the high-order accurate numerical schemes for the
ideal special relativistic magnetohydrodynamic (RMHD) equations. In the
covariant form, the four-dimensional space-time RMHD equations can be written as
follows \cite{Anile1987}
\begin{equation}\label{eq:RMHD}
    \begin{alignedat}{2}
       \parc(\rho u^\alpha)=0,           
      \ \
       \parc \mathrm{T}^{\alpha\beta}=0, 
      \ \
       \parc \Psi^{\alpha\beta}=0,       
  \end{alignedat}
\end{equation}
where the Einstein summation convention has been used,  $\rho$ and $u^\alpha$ denote the rest-mass density and   the
four-velocity vector, respectively,
$\parc$ denotes the covariant derivative with respect to
the four-dimensional space-time coordinates $(t,x^1,x^2,x^3)$,
the Greek indices run from $0$ to $3$ (or $t,x,y,z$),
\begin{equation}
  \Psi^{\alpha\beta}=u^\alpha b^\beta-u^\beta b^\alpha,
\end{equation}
and $\mathrm{T}^{\alpha\beta}$ is  the energy-momentum tensor and can be decomposed into
the fluid part $\mathrm{T}^{\alpha\beta}_{f}$ and the electromagnetic part
$\mathrm{T}^{\alpha\beta}_{m}$,   defined by
\begin{align}
  \mathrm{T}^{\alpha\beta}_{f}&=(e+p)u^\alpha u^\beta +pg^{\alpha\beta}, \\
  \mathrm{T}^{\alpha\beta}_{m}&=\bb(u^\alpha u^\beta+g^{\alpha\beta}/2)-b^\alpha b^\beta. \label{eq:RMHD2}
\end{align}
%
Here $b^\alpha$ and $e$ are the covariant magnetic field  and the total
energy density, respectively.

Throughout this paper, the metric tensor $g^{\alpha\beta}$ is taken as the
Minkowski tensor, i.e. $g^{\alpha\beta}=\diag\{-1,1,1,1\}$, and units in which the speed of light is equal to one will be used.
The relations between the four-vectors $u^\alpha$ and $b^\alpha$, and the spatial components
of the velocity $\bv=(\vx,\vy,\vz)$ and the laboratory magnetic field
$\bm{B}=(\Bx,\By,\Bz)$ are
\begin{align}
  &u^\alpha=W(1,\bv),\\
  &b^\alpha=W\left(\vB,\frac{\bm{B}}{W^2}+\bv(\vB)\right),
\end{align}
with
\begin{align}
  u^\alpha u_\alpha=-1,\quad u^\alpha b_\alpha=0,\quad \bb\equiv b^\alpha b_\alpha=\frac{\BB}{W^2}+(\vB)^2,
\end{align}
where $W=1/\sqrt{1-\abs{\bv}^2}$ is the Lorentz factor.
To close the system \eqref{eq:RMHD}-\eqref{eq:RMHD2}, this paper
  considers   the equation of state (EOS) for the perfect gas
\begin{equation*}
  p=(\Gamma-1)\rho \epsilon,
\end{equation*}
with the adiabatic index $\Gamma\in(1,2]$, where $\epsilon=e/\rho-1$ is the specific
internal energy.

For the computational purpose, the system \eqref{eq:RMHD}-\eqref{eq:RMHD2} should be rewritten in a
  lab frame as follows
\begin{equation}\label{eq:RMHDdiv1}
  \pd{\bU}{t}+\sum_k \pd{\bF_k(\bU)}{x^k}=0, 
\end{equation}
with the divergence-free constraint on the magnetic field
\begin{equation}
 \sum_k \pd{B_k}{x^k}=0, \label{eq:divB}
\end{equation}
where $\bU$ and $\bF_k$ are respectively
the conservative variables vector and the flux vector along the $x^k$-direction, and defined by
\begin{align}\label{eq:RMHDdiv2}\begin{aligned}
  &\bU=(D,\bm{m},E,\bm{B})^\mathrm{T},\\
  &\bF_k=(Dv_k,\bm{m}v_k-B_k(\bm{B}/W^2+(\vB)\bv)+\pt\be_k,m_k,v_k\bm{B}-B_k\bv)^\mathrm{T},
\end{aligned}\end{align}
with the mass density $D=\rho W$, the momentum density
$\bm{m}=(\rho hW^2+\BB)\bv-(\bv\cdot\bm{B})\bm{B}$,
the energy density $E=DhW-\pt+\BB$, and $\be_k$ denotes the $k$-th row of the
$3\times 3$ unit matrix.
Here, $\pt$ denotes the total pressure containing the gas pressure $p$ and
the magnetic pressure $p_m=\frac12\bb$,
and $h$ is the specific enthalpy defined by $h=(e+p)/\rho$.
Because there is no explicit expression for the primitive variables
$(\rho,\bv,p,\bm{B})^\mathrm{T}$ and the flux $\bF_k$ in terms of $\bU$,
a nonlinear algebraic equation should be solved (the approach \cite{Koldoba2002}
is used by us), in order to recover the values of the primitive variables and the flux from the given $\bU$.

The RMHD system  \eqref{eq:RMHD}-\eqref{eq:RMHD2} considers the relativistic description for the dynamics of
the fluid (gas) at nearly the speed of light when  the astrophysical
phenomena are investigated from stellar to galactic scales, e.g. coalescing neutron stars, core
collapse supernovae,  active galactic nuclei, superluminal jets, the formation of black holes,
and gamma-ray bursts, etc.
It is obvious that its nonlinearity   becomes much
stronger than the non-relativistic case because of the relativistic effect,
thus it is very difficult and challenging to treat it analytically.
Numerical simulation is a useful way leading us to a better understanding of the physical
mechanisms in the relativistic hydrodynamics (RHD) and the  RMHDs.
The first numerical work may date back to the artificial viscosity method
for the RHD equations in the Lagrangian coordinates \cite{may1,may2}
and the Eulerian coordinates \cite{wilson}.
Since the early 1990s, the modern shock-capturing methods were extended to the
RHDs and   RMHDs, such as
the Roe-type scheme \cite{Anton2010,eulderink},
the Harten-Lax-van Leer (HLL) method \cite{Del2002,Del2003,Mignone2009,schneider},
the Harten-Lax-van Leer Contact (HLLC) method \cite{Ling2019,Mignone2005,Mignone2006},
the essentially non-oscillatory (ENO) and the weighted ENO (WENO) methods
\cite{dolezal,Del2002,Del2003},
the piecewise parabolic methods \cite{marti3,Mignone2005PPM},
the Runge-Kutta discontinuous Galerkin (DG) methods with WENO limiter
\cite{Zhao2013,Zhao2017},
the direct Eulerian generalized Riemann problem schemes
\cite{wu2016,wu2014,yz1,yz2},
the two-stage fourth-order time discretization \cite{Yuan2019},
the adaptive moving mesh methods \cite{he1,he2}, and so on.
The readers are  referred to the early review articles
\cite{font,marti2,marti2015} for more references.
Recently, the properties of the admissible state set and the
physical-constraints-preserving (PCP) numerical schemes were well studied for
the RHDs and   RMHDs, see \cite{wu2017a,wu2017b,wu2015,wu2018,wu2017}.
For the numerical solutions of the  RMHD equations, we need to  deal carefully with
the   divergence-free constraint \eqref{eq:divB}.
In the non-relativistic case, many works have focused on this issue, for
example, the projection method \cite{Brackbill1980}, the constrained transport
(CT) method \cite{Evans1988}, the eight-wave formulation of the MHD equations
\cite{Powell1994}, the hyperbolic divergence cleaning method \cite{Dedner2002},
the locally divergence-free DG method \cite{LiFY2005}, the ``exactly''
divergence-free central DG method \cite{LiFY2011}. Some of those works have
been extended to the relativistic case, such as
\cite{Beckwith2011,Del2003,Nunez2016,Zhao2017}.

For the RMHD equations, the entropy condition is an
important property which must be satisfied according to the second law of
thermodynamics.
On the other hand, it is well known that the weak solution of the
quasi-linear hyperbolic conservation laws might not be unique so that the
entropy condition is needed to single out the unique physical relevant solution among
all the weak solutions. 
Thus it is a matter of cardinal significance to seek the entropy stable schemes (satisfying some discrete or semi-discrete  entropy conditions) for the quasi-linear system of hyperbolic conservation laws.

\begin{definition}
\label{Def:001}
  A strictly convex scalar function $\eta(\bU)$ is called an {\em entropy function} for the
  system \eqref{eq:RMHDdiv1}-\eqref{eq:RMHDdiv2} if there are associated entropy
  fluxes $q_k(\bU)$ such that
  \begin{equation}\label{eq:entropy}
    q_k'(\bU)=\bV^\mathrm{T}\bF_k'(\bU),
  \end{equation}
  where $\bV=\eta'(\bU)^\mathrm{T}$ is called the {\em entropy variables}, and
  $(\eta,q_k)$ is an {\em entropy pair}.
\end{definition}
For the smooth solutions of \eqref{eq:RMHDdiv1}-\eqref{eq:RMHDdiv2},
multiplying \eqref{eq:RMHDdiv1} by $\bV^\mathrm{T}$ gives the entropy identity
\begin{equation*}
  \pd{\eta(\bU)}{t}+\sum_k\pd{q_k(\bU)}{x^k} = 0.
\end{equation*}
However, if the solution  contains a discontinuity, then the above identity
does not hold.
\begin{definition}
  A weak solution $\bU$ of \eqref{eq:RMHDdiv1}-\eqref{eq:RMHDdiv2} is called an {\em entropy solution} if for
  all entropy functions $\eta$, the inequality
  \begin{equation}\label{eq:entropyineq}
    \pd{\eta(\bU)}{t}+\sum_k\pd{q_k(\bU)}{x^k} \leqslant 0,
  \end{equation}
  holds in the sense of distributions.
\end{definition}

For the scalar conservation laws, the conservative monotone schemes
  were nonlinearly stable and satisfied the discrete entropy conditions,
thus they could converge to the entropy solution \cite{Crandall1980,Harten}.
A class of so-called E-schemes satisfying the entropy condition for any convex
entropy was studied in \cite{Osher1984,Osher1988}, but they were only first-order accurate.
Generally, it is hard to show that the high-order schemes of
the scalar conservation laws and the schemes for the system of hyperbolic conservation laws
satisfy the entropy inequality for any convex entropy function.
Two relative works were presented in 
\cite{Bouchut1996}  and 
\cite{Hughes1986}.
The former is second-order accurate and not in the standard finite volume
form, while the latter approximates the entropy variables and needs solving
nonlinear equations at each time step.
A lot of people are trying  to study the high-order accurate
entropy stable schemes, which satisfy the entropy inequality for a given entropy pair.
The second-order entropy conservative schemes (satisfying the discrete entropy
identity) were studied in \cite{Tadmor1987,Tadmor2004}, and their higher-order
extension was considered in \cite{Lefloch2002}.
It is known that the entropy conservative schemes may become oscillatory near the
shock waves so that some additional dissipation term  has to be added
to obtain the entropy stable schemes (satisfying the
discrete entropy inequality).
Combining the entropy conservative flux 
with the ``sign'' property of the ENO reconstruction, the
arbitrary high-order entropy stable schemes were constructed by using high-order
dissipation terms \cite{Fjordholm2012}.
The entropy stable schemes based on summation-by-parts (SBP) operators were
developed for the Navier-Stokes equations \cite{Fisher2013}.
The semi-discrete DG schemes for scalar conservation laws were proved to satisfy a discrete entropy
inequality for the square entropy  \cite{Jiang1994},
and some entropy stable DG schemes   were also
studied, such as the space-time DG
formulation \cite{Barth1999,Hiltebrand2014} and the entropy stable nodal DG schemes using suitable quadrature rules
for the conservation laws \cite{Chen2017} and the MHD equations \cite{Liu2018}.
As a base of those works, constructing the affordable two-point entropy
conservative flux is very important, and has been
extended to the shallow water equations \cite{Gassner2016},
the RHD equations \cite{Duan2019},
the MHD equations \cite{Chandrashekar2016,Winters2016,Derigs2018},
very recently the RMHD equations \cite{Wu2019}, and so on.
Because it can be verified that the original MHD equations cannot be symmetrized,
which is also true for the RMHD equations,
most of the above mentioned MHD works are based on
the modified MHD equations with a non-conservative source term,
which is first introduced by Godunov \cite{Godunov1972} in one-dimensional case,
and then extended to multi-dimensional cases by Powell \cite{Powell1994}.
Due to  introducing the source term, the sufficient condition proposed in
 \cite{Tadmor1987} for a finite difference   or   volume
scheme to satisfy an entropy identity should be modified, see
\cite{Chandrashekar2016,Winters2016}, and in the DG framework, the
non-conservative source term should also be carefully discretized \cite{Liu2018}.

This paper aims at studying the high-order accurate entropy stable nodal DG schemes for the RMHD
equations. Because the conservative RMHD equations cannot be symmetrized,
a suitable non-conservative source term should be added to obtain the
modified RMHD equations, which can be symmetrized by the entropy pair following
  \cite{Godunov1972}.
By using the modified RMHD equations and suitable quadrature rules as well as the SBP,
high-order accurate entropy stable DG
schemes   satisfying the entropy inequality for the given entropy pair  are constructed, where
the so-called two-point entropy conservative flux is used in the integral over the cell,
while the entropy stable fluxes are used at the cell interfaces, e.g., the
Godunov flux, the HLL flux with suitably chosen wave speeds, the Lax-Friedrichs
flux, and so on. It can be shown that the temporal change of the total entropy in each cell
for such semi-discrete scheme is only effected from the entropy stable fluxes at the cell interfaces. 
One of our main tasks is to technically design the affordable
two-point entropy conservative flux with zero parallel magnetic component,
which  is ``fully consistent'' with the physical flux
 (Note that the parallel magnetic component of the physical flux is
always zero).
It is worth noting that in a very recent and independent work
\cite{Wu2019},
an entropy conservative scheme is constructed with a suitable discretization of the source term,
but its parallel magnetic component of the entropy conservative flux  does not always vanish.
This paper will give a  comparison of those entropy conservative fluxes by some numerical tests to
validate that our newly derived entropy conservative flux may serve as a better
base of the entropy conservative or   stable schemes for the RMHD
equations. 

The paper is organized as follows.
Section \ref{section:Symm} introduces the modified symmetrizable RMHD equations.
Section \ref{section:OneD} presents the 1D entropy stable nodal DG schemes
and constructs the affordable two-point entropy conservative flux for the
one-dimensional RMHD equations.
Section \ref{section:MultiD} extends the results to the two-dimensional cases.
Extensive numerical tests are conducted in Section \ref{section:Num}
to validate the effectiveness of our schemes.
Section \ref{section:Conclusion} gives some conclusions.

\section{Symmetrization of the ideal special RMHD equations}\label{section:Symm}
This section gives a derivation of the symmetrizable RMHD equations,
which is analogous to the non-relativistic case.
First of all, we want to show that the RMHD equations
\eqref{eq:RMHDdiv1}-\eqref{eq:RMHDdiv2} are not symmetrizable by using the
entropy pair based on the thermodynamical entropy as in the RHD case \cite{Duan2019}.
For the smooth solutions of the special RMHD equations
\eqref{eq:RMHDdiv1}-\eqref{eq:RMHDdiv2}, if
defining the thermodynamical entropy
\begin{equation*}
  s=\ln(p)-\Gamma\ln(\rho),
\end{equation*}
then from \eqref{eq:RMHDdiv1}-\eqref{eq:RMHDdiv2} and the first law of thermodynamics
\begin{equation*}
  T~\dd s=\dd(e/\rho)+p\dd(1/\rho),
\end{equation*}
one can show \cite{Anile1987} that
\begin{equation*}
  \pd{s}{t}+\sum_k v_k\pd{s}{x^k}+\frac{(\vB)\divB}{\rho\epsilon}=0,
\end{equation*}
where $T$ is the temperature,
which is equal to $\epsilon$ under the assumptions in this paper.
Combining it with the first equation in \eqref{eq:RMHDdiv1}  obtains
\begin{equation}
  \pd{(\rho Ws)}{t}+\sum_k \pd{(\rho v_k Ws)}{x^k}+\frac{(\Gamma-1)\rho W(\vB)\divB}{p}=0,
\end{equation}
which implies  that  the following quantities
\begin{equation}\label{eq:entropypair}
  {\eta}(\bU)=\dfrac{-\rho Ws}{\Gamma-1},\quad {q_k}(\bU)=\dfrac{-\rho v_k Ws}{\Gamma-1},
\end{equation}
satisfy an additional conservation law
\begin{equation}\label{eq:entropyID}
  \pd{\eta}{t}+\sum_k \pd{q_k}{x^k}=0,
\end{equation}
 under the constraint $\divB=0$.
However, unfortunately,  the pair $(\eta, q_k)$ defined in \eqref{eq:entropypair}
does not  satisfy  \eqref{eq:entropy}, since  
\begin{equation}\label{eq:qkFk1}
  q_k'(\bU)=\bV^\mathrm{T}\bF_k'(\bU)+\dfrac{\rho W}{p}(\vB)B_k'(\bU),
\end{equation}
where the  vector $\bV={\eta}'(\bU)^\mathrm{T}$ is explicitly given by
\begin{equation*}
  \bV=(V_1,\cdots,V_8)^\mathrm{T}=\left(\dfrac{\Gamma-s}{\Gamma-1}+\dfrac{\rho}{p},
  \dfrac{\rho u^x}{p}, \dfrac{\rho u^y}{p}, \dfrac{\rho u^z}{p},
  -\dfrac{\rho W}{p}, \dfrac{\rho b^x}{p}, \dfrac{\rho b^y}{p}, \dfrac{\rho b^z}{p} \right)^\mathrm{T}.
\end{equation*}
Moreover,
 the pair $(\eta, q_k)$ cannot symmetrize the RMHD system \eqref{eq:RMHDdiv1}-\eqref{eq:RMHDdiv2},
 because
it can be verified that  the matrix $\pd{\bU}{\bV}$ is symmetric
and   positive definite, but
  $\pd{\bF_k}{\bV}$ is not symmetric.


To derive a symmetrizable RMHD system for   \eqref{eq:RMHDdiv1}-\eqref{eq:RMHDdiv2},
one needs to add some non-conservative
source terms, similar to those for the non-relativistic ideal MHD equations
  \cite{Godunov1972,Powell1994},
 to get a modified RMHD system    as follows
\begin{equation}\label{eq:symm1}
  \pd{\bU}{t}+\pd{\bF_k}{x^k}+\Phi'(\bV)^\mathrm{T}\divB=0,
\end{equation}
where $\Phi(\bV)$ is a homogeneous function of degree one, i.e. $\Phi=\Phi'(\bV)\bV$.
Taking the dot product of $\bV=\eta'(\bU)^\mathrm{T}$ with \eqref{eq:symm1} yields
\begin{align*}
 \bV^\mathrm{T}\pd{\bU}{t} &+\left(\bV^\mathrm{T}\pd{\bF_k}{\bU}+\Phi(\bV)\pd{B_k}{\bU}\right)\pd{\bU}{x^k}\\
  =&\pd{\eta}{t}+\left(\pd{q_k}{\bU}
  +\left(\Phi(\bV) -\dfrac{\rho W}{p}(\vB)\right)\pd{B_k}{\bU}\right)\pd{\bU}{x^k}=0.
\end{align*}
It is obvious that the above equation becomes the   identity \eqref{eq:entropyID}
if  defining the homogeneous function $\Phi$ by
\begin{equation*}
  \Phi(\bV)=\dfrac{\rho W(\vB)}{p}=-\dfrac{V_2V_6+V_3V_7+V_4V_8}{V_5}.
\end{equation*}
One can  verify that 
$\Phi'(\bV)=\big(0,\dfrac{\bx}{W},\dfrac{\by}{W},\dfrac{\bz}{W},\vB,\bv\big)$ and
the pair $(\eta,q_k)$ can symmetrize the modified RMHD system \eqref{eq:symm1}
because  applying the change of variables $\bV=\bV(\bU)$   gives
\begin{equation*}
  \pd{\bU}{\bV}\pd{\bV}{t}+\left(\pd{\bF_k}{\bV}+\Phi'(\bV)\pd{B_k}{\bV}\right)\pd{\bV}{x^k}=0,
\end{equation*}
and $\pd{\bF_k}{\bV}+\Phi'(\bV)\pd{B_k}{\bV}$ is symmetric.
Notice that the   identity \eqref{eq:entropyID} is obtained without using
the divergence-free condition, and useful in constructing an entropy stable
scheme  because the numerical divergence may not be zero.
Moreover, we can define the ``entropy potential'' $\psi_k$ from given  $(\eta(\bU), q_k(\bU))$ by
\begin{equation}\label{eq:potential_final}
  \psi_k:= \bV^\mathrm{T} \bF_k(\bU)+\Phi(\bV) B_k-q_k(\bU)=\rho v_kW+\dfrac{\rho
  v_kW\bb}{2p},
\end{equation}
which makes the following identity true
\begin{align*}
	\int_{\Omega} \left( \pd{\eta}{t}+ \pd{q_k}{x^k} \right)\dd\bm{x}
	 =&\int_{\Omega} \bV^\mathrm{T}\left( \pd{\bU}{t}+ \pd{\bF_k(\bU)}{x^k}+\Phi'(\bV)^\mathrm{T}\divB\right)\dd\bm{x}\\
	 =&\int_{\Omega} \left( \pd{\eta(\bU)}{t}+ \pd{(\bV^\mathrm{T} \bF_k(\bU))}{x^k} - \pd{\bV^\mathrm{T}}{x^k} \bF(\bU)+\Phi(\bV)\divB \right)\dd\bm{x}\\
	 =&\int_{\Omega} \left( \pd{\eta(\bU)}{t}+ \pd{(\bV^\mathrm{T} \bF_k(\bU))}{x^k} - \pd{\psi_k(\bU)}{x^k}+\Phi(\bV)\divB \right)\dd\bm{x}.
\end{align*}
The ``entropy potential'' {plays}   an important role in obtaining the sufficient condition for the two-point entropy conservative fluxes.

\begin{rmk}
Combining the modified induction equations and the first equation in
\eqref{eq:RMHDdiv1} gives
\begin{equation*}
\dfrac{\partial}{\partial t}\left(\dfrac{\divB}{\rho W}\right)
+\bv\cdot\nabla\left(\dfrac{\divB}{\rho W}\right)=0,
  \end{equation*}
which implies that the errors in divergence may be transported by
  the flow. 
  However, one drawback is that the
  non-conservative source term may lead to incorrect solutions.  
\end{rmk}

\section{One-dimensional entropy stable DG schemes}\label{section:OneD}
This section considers the one-dimensional $x$-splitting system of \eqref{eq:symm1}, i.e.,
\begin{equation}\label{eq:RMHDsym1D}
  \pd{\bU}{t}+\pd{\bF_1(\bU)}{x}=-\Phi'(\bV)^\mathrm{T}\pd{\Bx}{x}.
\end{equation}

\subsection{Spatial DG discretization}
Assume that the computational domain $\Omega$ is divided into $N_x$ cells,
$I_i=(x_{\xl},x_{\xr})$, $i=1,2,\cdots,N_x$, where
$x_{\frac12}<x_{\frac32}<\dots<x_{N_x+\frac12}$.
Denote the center of $I_i$ and the mesh size by
$x_i=\frac12(x_{\xl}+x_{\xr})$
and $\Delta x_i=x_{\xr}-x_{\xl}$ respectively.
The spatial DG approximation space is
\begin{equation}
  \bW^r_h=\{\bw_h \in \left[L^2(\Omega)\right]^8:~ \bw_h|_{I_i}\in [P^r(I_i)]^8,i=1,2,\dots,N_x\},
\end{equation}
where $P^r(I_i)$ denotes all polynomials of degree at most $r$ on $I_i$.
Let us multiply \eqref{eq:RMHDsym1D} with the test function $\bw_h\in \bW^r_h$,
integrating it over the control volume $I_i$,
and introducing the numerical fluxes at the cell interfaces following \cite{Liu2018},
then the spatial DG approximation for \eqref{eq:RMHDsym1D} is to find $\bU_h\in\bW^r_h$,
satisfying
\begin{align}\label{eq:1DDG}
  \frac{d}{dt}\int_{I_i}\bU_h^\mathrm{T} \bw_h\dd x
  =&\int_{I_i}\bF_1(\bU_h)^\mathrm{T}\pd{\bw_h}{x}\dd x
  -\int_{I_i}\Phi'(\bU_h)\pd{(\Bx)_h}{x}\bw_h\dd x \nonumber \\
  &-\hat{\bF}_\xr^\mathrm{T}\bw_h(x_\xr^-) +\hat{\bF}_\xl^\mathrm{T}\bw_h(x_\xl^+) \nonumber \\
  &-\frac12\Phi'(\bU_h(x_\xr^-))\jump{(\Bx)_h}_\xr\bw_h(x_\xr^-) \nonumber \\
  &-\frac12\Phi'(\bU_h(x_\xl^+))\jump{(\Bx)_h}_\xl\bw_h(x_\xl^+),
\end{align}
 for any $\bw_h\in\bW^r_h$ and $i=1,2,\cdots,N_x$,
where $\jump{(\Bx)_h}=(\Bx)_h^+-(\Bx)_h^-$ denotes the jump of
$(\Bx)_h$ at the cell interface, with the superscripts $+,-$ denoting its right and
left limits, and $\hat\bF_\xr$ is a two-point numerical flux
\begin{equation}
  \hat\bF_\xr=\hat\bF(\bU_h(x_\xr^-),\bU_h(x_\xr^+)),
\end{equation}
which is consistent with the physical flux $\bF_1$ in \eqref{eq:RMHDsym1D}. 
Notice that the discretization of the non-conservative source term
is originally proposed for directly solving the Hamilton-Jacobi equations
\cite{Cheng2007}.

In what follows, the derivation of the entropy conservative and stable DG
schemes largely relies on the summation-by-parts (SBP) operator. The key idea
is to approximate the integrals in \eqref{eq:1DDG} by using the $(r+1)$-point
Legendre-Gauss-Lobatto quadrature rule.
For the reference element  $I=[-1,1]$,  denote the Legendre-Gauss-Lobatto
quadrature points as
\begin{equation*}
  -1=\xi_0<\xi_1<\cdots<\xi_r=1,
\end{equation*}
and  corresponding weights as $\omega_j,j=0,1,\cdots,r$.
Define the Lagrangian basis polynomials by
\begin{equation*}
  L_j(\xi)=\prod\limits_{l=0,l\not=j}^{r}\dfrac{\xi-\xi_l}{\xi_j-\xi_l}, \
  j=0,1,\cdots,r,
\end{equation*}
 the {difference} matrix $\bm{D}=({D}_{jl})$ and the mass matrix $\bm{M}=({M}_{jl})$ respectively by
\begin{equation*}
  {D}_{jl}=L_l'(\xi_j),
 \ \
   {M}_{jl}=\pro{L_j}{L_l}_\omega=\omega_j\delta_{jl},
\end{equation*}
where 
\begin{equation*}
  \pro uv_\omega=\sum_{j=0}^r \omega_ju(\xi_j)v(\xi_j).
\end{equation*}
It is obvious that $L_j(\xi_l)=\delta_{jl}$, $\bm{M}=\text{diag}\{\omega_0,\cdots,\omega_r\}$,
and the matrices $\bm{M}$ and $\bm{D}$ satisfy the following property.

\begin{proposition}
The identity for the matrices $\bm{M}$ and $\bm{D}$
  \begin{equation}\label{eq:SPB01}
    \bm{M}\bm{D}+\bm{D}^\mathrm{T}\bm{M}
    =\bm{B},\
     \end{equation}
holds,  where   $\bm{B}$ is
 the boundary matrix defined by
  \begin{equation*}
    \bm{B}=\text{diag}\{-1,0,\cdots,0,1\}=:\text{diag}\{\tau_0,\cdots,\tau_r\}.
  \end{equation*}
\end{proposition}
 The above property is usually considered as  the summation-by-parts (SBP) property, which
 is a discrete analog of the integration by parts, that is,
 $ \bm{\xi}^T \bm{M}\bm{D} \bm{\eta} +
 \bm{\xi}^T\bm{D}^\mathrm{T}\bm{M}\bm{\eta}
    =\bm{\xi}^T\bm{B}\bm{\eta}$, $\bm{\xi},\bm{\eta}\in\mathbb R^{r+1}$.

Now  the integrals in \eqref{eq:1DDG} can be approximated by using the
$(r+1)$-point Legendre-Gauss-Lobatto quadrature rule and   the
SBP property \eqref{eq:SPB01}  to derive the semi-discrete nodal DG scheme   as follows
\begin{align}\nonumber
   \dfrac{\dd \bU_i^l}{\dd t}=
  &-\dfrac{4}{\Delta x_i}\sum_{p=0}^r {D}_{lp}{\tilde{\bF}_{1}(\bU_i^p,\bU_i^l)}
 -\dfrac{2}{\Delta x_i}\sum_{p=0}^r {D}_{lp}(\Phi_i^{'l})^\mathrm{T}(\Bx)_i^p\\
  &+\dfrac{2}{\Delta x_i}\dfrac{\tau_l}{\omega_l}\left(\bF_{i,1}^{l}-\bF_{i,1,*}^{l}\right)
  +\dfrac{2}{\Delta x_i}\dfrac{\tau_l}{\omega_l}\bs_i^{l}, \
  l=0,\cdots,r, \label{eq:1DES}
\end{align}
where the transformation between $I_i$ and $I$ is $x_i(\xi)=\frac12(x_\xl+x_\xr)+\frac{\xi}{2}\Delta x_i$, and
\begin{align*}
  &\bU_i^l=\bU_h(x_i(\xi_l)),\quad
  \bF_{i,1}^l=\bF_1(\bU_i^l),\quad
  (\Phi_i^{'l})^\mathrm{T}=\Phi'(\bV(\bU_i^l))^\mathrm{T},\quad l=0,1,\cdots,r,\\
  &\vec{\bF}_{i,1,*}
  =\begin{bmatrix} \bF_{i,1,*}^0,\bF_{i,1,*}^1,\cdots,\bF_{i,1,*}^r\end{bmatrix}
  :=\begin{bmatrix} \hat\bF_\xl (\bU_{i-1}^r,\bU_{i}^0),0,\cdots,0,\hat\bF_\xr (\bU_{i}^r,\bU_{i+1}^0)\end{bmatrix},\\
  &\vec{\bs}_i
  =\begin{bmatrix} \bs_{i}^0,\bs_i^1,\cdots,\bs_i^r\end{bmatrix}
  :=\begin{bmatrix} \dfrac12(\Phi_i^{'0})^\mathrm{T}[(\Bx)_{i}^0-(\Bx)_{i-1}^r],0,
  &\cdots,0,-\dfrac12(\Phi_i^{'r})^\mathrm{T}[(\Bx)_{i+1}^0-(\Bx)_i^r]\end{bmatrix}.
\end{align*}
The numerical fluxes {$\tilde{\bF}_{1}(\bU_i^p,\bU_i^l)$}, $l,p=0,1,\cdots, r$, 
are further used to approximate the fluxes in the volume integrals 
in \eqref{eq:1DDG}, respectively. The purpose of this work is to develop
the entropy stable DG scheme, that is, to derive
the two-point entropy  conservative flux $\bF_{1}^{EC}(\bU_i^p,\bU_i^l)$,
{and use $\bF_{1}^{EC}(\bU_i^p,\bU_i^l)$ and the entropy stable flux $\hat\bF^{ES}_{i\pm \frac12}$ to replace respectively  $\tilde{\bF}_{1}(\bU_i^p,\bU_i^l)$ and $\hat\bF_{i\pm \frac12}$ in \eqref{eq:1DES}.}

\begin{definition}
 For a given entropy pair, if  a consistent, symmetric two-point numerical flux $\tilde{\bF}_1(\bU_L,\bU_R)$  satisfies
  \begin{equation}\label{eq:conserFlux}
    \jump{\bV}^\mathrm{T}\cdot {\tilde{\bF}_1}=\jump{\psi_1}-\jump{\Phi}\mean{\Bx},
  \end{equation}
  then we call it an {\em entropy conservative flux}, denoted by $\bF_1^{EC}(\bU_L,\bU_R)$,
  where $\jump{\cdot}=(\cdot)_R-(\cdot)_L$  and  $\mean{\cdot}=\frac12((\cdot)_L+(\cdot)_R)$ denote the jump  and
  the mean value, respectively.
\end{definition}
Notice that the condition \eqref{eq:conserFlux} is different from that in \cite{Tadmor1987},
due to the last term for the special numerical approximation of the source term in \eqref{eq:RMHDsym1D}. 

\begin{definition}
For a given entropy pair, if   a consistent two-point numerical flux $\hat\bF(\bU_L,\bU_R)$ satisfies
  \begin{equation}\label{eq:stableFlux}
    \jump{\bV}^\mathrm{T}\cdot
    \hat\bF+\jump{\Phi}\mean{\Bx}-\jump{\psi_1}\leqslant 0,
  \end{equation}
  then  we  call it an {\em entropy stable flux}, denoted by $\hat\bF^{ES}(\bU_L,\bU_R)$.
\end{definition}

Following \cite{Liu2018} can easily give the following conclusions.
\begin{proposition} 
  If the numerical flux {$\tilde\bF_1(\bU_L,\bU_R)$} satisfies
  \eqref{eq:conserFlux}, 
     then the scheme \eqref{eq:1DES} is entropy conservative in the sense of that
     the identity
  \begin{equation*}
    \dfrac{\dd}{\dd t}\left(\sum_{l=0}^r\dfrac{\Delta x_i}{2}
    \omega_l\eta_l\right)=\mathcal{F}_i^0-\mathcal{F}_i^r,
  \end{equation*}
  holds, where
  \begin{align*}
    &\mathcal{F}_i^r=((\bV^r_i)^\mathrm{T}\bF_{i,1,*}^r-\psi_i^r)+\Phi_i^r\mean{\Bx}_i^r,\\
    &\mathcal{F}_i^0=((\bV^0_i)^\mathrm{T}\bF_{i,1,*}^0-\psi_i^0)+\Phi_i^0\mean{\Bx}_i^0.
  \end{align*}
  Moreover, the scheme is at least $r$-th order accurate measured by local
  truncation errors.
\end{proposition}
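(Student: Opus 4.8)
The plan is to derive the entropy identity by contracting the nodal scheme \eqref{eq:1DES} with the entropy variables and the quadrature weights, and then to collapse the volume contributions using the structural properties of $\bm{D}$ together with the defining relation \eqref{eq:conserFlux}. Since $\bV=\eta'(\bU)^\mathrm{T}$, one has $\dfrac{\dd\eta_l}{\dd t}=(\bV_i^l)^\mathrm{T}\dfrac{\dd\bU_i^l}{\dd t}$, so that
\[
\dfrac{\dd}{\dd t}\left(\sum_{l=0}^r\dfrac{\Delta x_i}{2}\omega_l\eta_l\right)=\sum_{l=0}^r\dfrac{\Delta x_i}{2}\omega_l(\bV_i^l)^\mathrm{T}\dfrac{\dd\bU_i^l}{\dd t}.
\]
First I would substitute \eqref{eq:1DES} and split the result into four groups coming from the two-point volume flux, the $\Phi'$ volume term, the interface flux correction, and the source correction $\bs_i^l$. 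The prefactor $\Delta x_i/2$ cancels, and since $\tau_l=0$ for $0<l<r$, the last two groups collapse at once to boundary contributions at $l=0,r$.

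The heart of the proof is the volume flux term. I would use three facts: the homogeneity identity $(\bV_i^l)^\mathrm{T}(\Phi_i^{'l})^\mathrm{T}=\Phi_i^l$, which turns the $\Phi'$ volume term into $\Phi_i^l$ times a discrete derivative of $(\Bx)_i$; the partition-of-unity relation $\sum_p D_{lp}=0$, which kills every sum in which the summed nodal quantity is constant; and the SBP relation $\omega_l D_{lp}+\omega_p D_{pl}=\tau_l\delta_{lp}$ from \eqref{eq:SPB01}. Using the symmetry of $\tilde\bF_1$, I would symmetrize the double sum $\sum_{l,p}\omega_l D_{lp}(\bV_i^l)^\mathrm{T}\tilde\bF_1(\bU_i^p,\bU_i^l)$ via $l\leftrightarrow p$ and the SBP relation, producing a diagonal piece (consistent, since $\tilde\bF_1(\bU_i^l,\bU_i^l)=\bF_{i,1}^l$) and an off-diagonal piece weighted by $(\bV_i^l-\bV_i^p)^\mathrm{T}\tilde\bF_1$. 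Into the latter I substitute \eqref{eq:conserFlux}, replacing $\jump{\bV}^\mathrm{T}\tilde\bF_1$ by $\jump{\psi_1}-\jump{\Phi}\mean{\Bx}$; reusing $\sum_p D_{lp}=0$ and the SBP relation then telescopes the $\psi_1$-part to $\sum_l\tau_l\psi_i^l$, while the $\Phi$-part combines with the $\Phi'$ volume term, leaving only the boundary quantities $\sum_l\tau_l\Phi_i^l(\Bx)_i^l$.

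With the volume terms reduced to boundary values at $l=0,r$, I would reinstate the two interface groups. The consistent physical fluxes $(\bV_i^0)^\mathrm{T}\bF_{i,1}^0$ and $(\bV_i^r)^\mathrm{T}\bF_{i,1}^r$ cancel against those from the flux-correction group, leaving the numerical values $(\bV_i^0)^\mathrm{T}\bF_{i,1,*}^0$ and $(\bV_i^r)^\mathrm{T}\bF_{i,1,*}^r$; simultaneously, applying $(\bV_i^l)^\mathrm{T}(\Phi_i^{'l})^\mathrm{T}=\Phi_i^l$ to the half-jump corrections $\bs_i^{0}$ and $\bs_i^{r}$ converts the nodal products $\Phi_i^{0}(\Bx)_i^{0}$ and $\Phi_i^{r}(\Bx)_i^{r}$ into the interface means $\Phi_i^{0}\mean{\Bx}_i^{0}$ and $\Phi_i^{r}\mean{\Bx}_i^{r}$. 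Collecting the $l=0$ and $l=r$ terms then reproduces exactly $\mathcal{F}_i^0-\mathcal{F}_i^r$. I expect the main obstacle to be the bookkeeping of the non-conservative $\Phi$/$\divB$ terms: because \eqref{eq:conserFlux} carries the extra piece $\jump{\Phi}\mean{\Bx}$ absent in the classical conservation-law case, one must verify that the $\Phi'$ volume contribution and the half-jump source corrections $\bs_i^l$ conspire precisely to turn nodal values into symmetric interface means, which is exactly where the particular discretization of the source term in \eqref{eq:1DES} is needed.

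For the accuracy claim I would argue by local truncation error, following \cite{Liu2018}. The difference matrix $\bm{D}$ differentiates polynomials of degree at most $r$ exactly and the $(r+1)$-point Legendre-Gauss-Lobatto rule is exact to degree $2r-1$; hence, for a smooth exact solution, a Taylor expansion of the consistent symmetric two-point flux (whose linear term reproduces the standard derivative through $\sum_p D_{lp}=0$, with the symmetry fixing the $\tfrac12$-weighting) shows that $2\sum_p D_{lp}\tilde\bF_1(\bU_i^p,\bU_i^l)$ approximates $\partial_\xi\bF_1$ to the order of the SBP operator. Together with the consistency of the interface fluxes and of the source discretization, whose jumps vanish to high order on smooth data, the spatial residual of \eqref{eq:1DES} matches \eqref{eq:RMHDsym1D} to $O(\Delta x_i^{r})$, giving at least $r$-th order accuracy.
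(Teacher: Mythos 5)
Your proposal is correct and follows exactly the approach the paper itself relies on: the paper gives no details, simply stating that the conclusions follow from the SBP flux-differencing entropy analysis of Liu, Shu and Zhang \cite{Liu2018}, and your derivation is that argument correctly adapted to the modified RMHD system --- including the two points where the adaptation is nontrivial, namely the cancellation of the $\Phi'$ volume term against the double sum produced by the extra $\jump{\Phi}\mean{\Bx}$ piece of \eqref{eq:conserFlux}, and the conversion of the nodal products $\Phi_i^{0}(\Bx)_i^{0}$, $\Phi_i^{r}(\Bx)_i^{r}$ into interface means via the half-jump sources $\bs_i^l$. The accuracy claim is likewise handled as in \cite{Liu2018,Chen2017} (symmetric consistent two-point fluxes in flux-differencing form retain the order of the SBP operator, and the interface and source corrections vanish when the smooth exact solution is inserted), so nothing essential is missing.
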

\begin{proposition}
 If the numerical fluxes {$\tilde\bF_1(\bU_L,\bU_R)$} and $\hat\bF_{i\pm\frac12}$ in \eqref{eq:1DES} are entropy conservative and stable, respectively,
then the scheme \eqref{eq:1DES} is entropy stable
in the sense  that the inequality
  \begin{equation*}
    \dfrac{\dd}{\dd t}\left(\sum_{l=0}^r\dfrac{\Delta x_i}{2}\omega_l\eta_l\right)
    +\left(\mathcal{Q}_{\xr} - \mathcal{Q}_{\xl} \right)\leqslant 0,
  \end{equation*}
 holds, where
  \begin{align*}
 \mathcal{Q}_{\xr}=\dfrac12\left(\mathcal{F}_i^r + \mathcal{F}_{i+1}^0\right),\quad
 \mathcal{Q}_{\xl}=\dfrac12\left(\mathcal{F}_{i-1}^r + \mathcal{F}_i^0 \right),
 \end{align*}
 which are consistent with the entropy flux $q_1$ in \eqref{eq:entropypair}.
\end{proposition}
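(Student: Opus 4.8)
The plan is to leverage the preceding entropy conservative proposition almost verbatim. The decisive observation is that the volume two-point fluxes $\tilde\bF_1(\bU_i^p,\bU_i^l)$ are \emph{still} assumed entropy conservative, so the per-cell argument of that proposition — multiply \eqref{eq:1DES} by $\frac{\Delta x_i}{2}\omega_l(\bV_i^l)^\mathrm{T}$, sum over $l$, and invoke the SBP property \eqref{eq:SPB01} together with the conservation condition \eqref{eq:conserFlux} to telescope the interior contributions — carries over unchanged. The interface fluxes enter the computation only linearly, through the boundary terms $(\bV_i^r)^\mathrm{T}\bF_{i,1,*}^r$ and $(\bV_i^0)^\mathrm{T}\bF_{i,1,*}^0$, so their entropy conservation is never used in deriving the local balance; replacing them by the entropy stable fluxes $\hat\bF^{ES}$ simply changes the value, not the form. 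Hence I would first record that the exact per-cell identity
\begin{equation*}
  \dfrac{\dd}{\dd t}\left(\sum_{l=0}^r\dfrac{\Delta x_i}{2}\omega_l\eta_l\right)=\mathcal{F}_i^0-\mathcal{F}_i^r
\end{equation*}
continues to hold as an equality, with $\mathcal{F}_i^0,\mathcal{F}_i^r$ defined exactly as before but evaluated using $\hat\bF^{ES}$.

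Next I would substitute this identity into the target expression and telescope. Inserting $\mathcal{Q}_\xr=\tfrac12(\mathcal{F}_i^r+\mathcal{F}_{i+1}^0)$ and $\mathcal{Q}_\xl=\tfrac12(\mathcal{F}_{i-1}^r+\mathcal{F}_i^0)$ and collecting the four resulting terms, the quantity to be bounded collapses to
\begin{equation*}
  \dfrac{\dd}{\dd t}\left(\sum_{l=0}^r\dfrac{\Delta x_i}{2}\omega_l\eta_l\right)+\mathcal{Q}_\xr-\mathcal{Q}_\xl
  =-\dfrac12\big(\mathcal{F}_i^r-\mathcal{F}_{i+1}^0\big)-\dfrac12\big(\mathcal{F}_{i-1}^r-\mathcal{F}_i^0\big),
\end{equation*}
so it suffices to prove that each interface difference, e.g. $\mathcal{F}_i^r-\mathcal{F}_{i+1}^0$ at the shared interface $\xr$, is nonnegative.

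This interface computation is the heart of the matter and the step I expect to require the most care. At $\xr$ the contributions $\mathcal{F}_i^r$ and $\mathcal{F}_{i+1}^0$ use the \emph{same} flux $\hat\bF^{ES}_\xr=\hat\bF^{ES}(\bU_i^r,\bU_{i+1}^0)$ and the \emph{same} two-sided average $\mean{\Bx}_\xr=\tfrac12[(\Bx)_i^r+(\Bx)_{i+1}^0]$; writing $\jump{\cdot}=(\cdot)_{i+1}^0-(\cdot)_i^r$, the differences $\bV_i^r-\bV_{i+1}^0$, $\psi_i^r-\psi_{i+1}^0$, $\Phi_i^r-\Phi_{i+1}^0$ become $-\jump{\bV}$, $-\jump{\psi_1}$, $-\jump{\Phi}$, so that
\begin{equation*}
  \mathcal{F}_i^r-\mathcal{F}_{i+1}^0
  =-\Big(\jump{\bV}^\mathrm{T}\hat\bF^{ES}_\xr+\jump{\Phi}\mean{\Bx}-\jump{\psi_1}\Big)\geqslant 0,
\end{equation*}
the inequality being precisely the entropy stability condition \eqref{eq:stableFlux}. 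The same identity at $\xl$ gives $\mathcal{F}_{i-1}^r-\mathcal{F}_i^0\geqslant0$, and the asserted semi-discrete entropy inequality follows. The subtle point to verify carefully is exactly that the mean and potential terms recombine into the single-interface quantities $\jump{\Phi}\mean{\Bx}$ and $\jump{\psi_1}$ of \eqref{eq:stableFlux}; this is what forces $\mean{\Bx}$ in the definition of $\mathcal{F}$ to be the two-sided interface average rather than a one-sided trace. Finally I would note the consistency of $\mathcal{Q}_{i\pm 1/2}$ with $q_1$: setting $\bU_i^r=\bU_{i+1}^0=\bU$, consistency of $\hat\bF^{ES}$ gives $\jump{\bV}=0$ and $\hat\bF^{ES}=\bF_1(\bU)$, whence $\mathcal{F}_i^r=\mathcal{F}_{i+1}^0=\bV^\mathrm{T}\bF_1(\bU)-\psi_1+\Phi\,\Bx=q_1(\bU)$ by the definition \eqref{eq:potential_final} of the entropy potential.
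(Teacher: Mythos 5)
Your proof is correct and follows essentially the route the paper intends: the paper gives no explicit argument, deferring to the reference [Liu, Shu, Zhang, J.~Comput.~Phys.~354 (2018)], where exactly this structure is used --- the per-cell identity of the preceding proposition (which never uses any property of the interface fluxes, since they enter only linearly in the boundary terms), followed by telescoping with $\mathcal{Q}_{i\pm1/2}$ and the sign argument $\mathcal{F}_i^r-\mathcal{F}_{i+1}^0=-\big(\jump{\bV}^\mathrm{T}\hat\bF^{ES}+\jump{\Phi}\mean{\Bx}-\jump{\psi_1}\big)\geqslant 0$ from the entropy stable flux condition \eqref{eq:stableFlux}. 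Your identification of $\mean{\Bx}$ in $\mathcal{F}_i^r$ as the two-sided interface average (forced by the source-term discretization $\bs_i^l$) and the final consistency check via \eqref{eq:potential_final} are both accurate.
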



\subsection{Two-point entropy conservative flux}\label{section:3.2}
This section  is going to derive the entropy conservative flux satisfying
\eqref{eq:conserFlux} for \eqref{eq:RMHDsym1D}.
The key is to use the identity
\begin{equation}\label{eq:jumpid}
  \jump{ab}=\mean{a}\jump{b}+\mean{b}\jump{a},
\end{equation}
and rewrite the jumps of the entropy variables $\bV$, the
potential $\psi_1$, and $\Phi$ as some linear combinations (the coefficients do
not depend on the jumps) of the jumps of a specially chosen
parameter vector $\widetilde{\bm{V}}$.
For simplicity in derivation, we choose the parameter vector as
\begin{align*}
  \widetilde{\bm{V}}=(\rho,\beta,u^x,u^y,u^z,b^x,b^y,b^z)^\mathrm{T},\ \
  \beta=\rho/p.
\end{align*}
At this point, we have
\begin{align*}
  s=-\ln\beta-(\Gamma-1)\ln\rho,~W=\sqrt{1+(\ux)^2+(\uy)^2+(\uz)^2},~
  b^0=\dfrac{u^kb^k}{W},~k=x,y,z,
\end{align*}
and the jump of $W$ can be expressed as
\begin{equation*}
  \begin{aligned}
    \jump{W}=&\dfrac{\mean{u^k}\jump{u^k}}{\mean{W}},
  \end{aligned}
\end{equation*}
which plays an important role in rewriting the jumps of the other components of $\bV$.
With the help of the above identities, we have
\begin{equation}\label{eq:Vjump}
  \begin{aligned}
    &\jump{\bV_1}=\dfrac{\jump{\rho}}{\meanln{\rho}}+\left(\dfrac{1}{(\Gamma-1)\meanln{\beta}}+1\right)\jump{\beta}
    =:\dfrac{\jump{\rho}}{\meanln{\rho}}+\alpha_0\jump{\beta},\\
    &\jump{\bV_2}=\mean{u^x}\jump{\beta}+\mean{\beta}\jump{u^x},\\
    &\jump{\bV_3}=\mean{u^y}\jump{\beta}+\mean{\beta}\jump{u^y},\\
    &\jump{\bV_4}=\mean{u^z}\jump{\beta}+\mean{\beta}\jump{u^z},\\
    &\jump{\bV_5}=-\mean{W}\jump{\beta}-\dfrac{\mean{\beta}\mean{u^k}}{\mean{W}}\jump{u^k},\\
    &\jump{\bV_6}=\mean{b^x}\jump{\beta}+\mean{\beta}\jump{b^x},\\
    &\jump{\bV_7}=\mean{b^y}\jump{\beta}+\mean{\beta}\jump{b^y},\\
    &\jump{\bV_8}=\mean{b^z}\jump{\beta}+\mean{\beta}\jump{b^z},
  \end{aligned}
\end{equation}
with the logarithmic mean $\meanln{a}= {\jump{a}}/{\jump{\ln{a}}}$
introduced in \cite{Ismail2009}, where its stable numerical implementation can be found.
Next we need to rewrite the right hand side (RHS) of \eqref{eq:conserFlux} as
some linear combinations similar to the above results
\begin{equation*}
  \begin{aligned}
    \jump{\rho\ux+\frac{\beta\ux\bb}{2}}&-\jump{\Phi}\mean{\Bx}
    =\mean{u^x}\jump{\rho}+\mean{\rho}\jump{u^x}\\
    &+\dfrac12\jump{\beta\ux\left(b^kb^k-\dfrac{(u^kb^k)^2}{W^2}\right)}
    -\jump{\dfrac{\beta u^mb^m}{W}}\mean{W\bx-\dfrac{\ux u^lb^l}{W}}\\
    =&\mean{u^x}\jump{\rho}+\mean{\rho}\jump{u^x}
    +\dfrac12\jump{\beta\ux\left(b^kb^k\right)}
    -\jump{\dfrac{\beta u^kb^k}{W}}\mean{W\bx}\\
    &-\dfrac12\jump{\beta\ux\dfrac{(u^kb^k)^2}{W^2}}
    +\jump{\dfrac{\beta u^mb^m}{W}}\mean{\dfrac{\ux u^lb^l}{W}},
    \quad m,l=x,y,z.
  \end{aligned}
\end{equation*}
A special attention should be paid to those terms
because  the parallel magnetic component of
the entropy conservative flux  $\bF_1^{EC}=(F_1,
F_2, F_3, F_4, F_5, F_6, F_7, F_8)^\mathrm{T}$ should be zero for consistency.
From \eqref{eq:Vjump},  it is not difficult to know that the term with the jump of $\bx$ only appears as $F_6\jump{\bx}$ at the left hand side  of \eqref{eq:conserFlux},
thus the term with $\jump{\bx}$ has to be zero at the RHS of \eqref{eq:conserFlux}.
{Different from the non-relativistic case \cite{Chandrashekar2016},
 }
 it is not enough to use the identity \eqref{eq:jumpid} to
rewrite the RHS of \eqref{eq:conserFlux} when one wants to eliminate terms containing
$\jump{\bx}$. Here a more straightforward way is used to handle the RHS of
\eqref{eq:conserFlux}.
By some manipulations, we have
  \begin{align}  \nonumber
\dfrac12\jump{\beta\ux(\bx)^2}&
    -\jump{\dfrac{\beta\ux\bx}{W}}\mean{W\bx}
    =-\dfrac{\bx_L\bx_RW_LW_R}{2}\jump{\dfrac{\beta\ux}{W^2}}
    \\  \nonumber
    &=-\dfrac{\bx_L\bx_RW_LW_R}{2\mean{W^2}}\left(\mean{\ux}\jump{\beta}+\mean{\beta}\jump{\ux}
    -2\mean{\dfrac{\beta\ux}{W^2}}\mean{u^k}\jump{u^k}\right)
    \\
    &=:-\alpha_1\left(\mean{\ux}\jump{\beta}+\mean{\beta}\jump{\ux}
    -2\mean{\dfrac{\beta\ux}{W^2}}\mean{u^k}\jump{u^k}\right),
  \label{eq:RHSjump1a}
    \\  \nonumber
\dfrac12\jump{\beta\ux(b^m)^2}
    &-\jump{\dfrac{\beta u^mb^m}{W}}\mean{W\bx}\\  \nonumber
    =&\dfrac12\mean{(b^m)^2}\mean{\ux}\jump{\beta}+\dfrac12\mean{(b^m)^2}\mean{\beta}\jump{\ux}+\mean{\beta\ux}\mean{b^m}\jump{b^m}
    \\ \nonumber
    &-\dfrac{\mean{W\bx}}{\mean{W}}\Big(\mean{b^m}\mean{u^m}\jump{\beta}+\mean{\beta}\mean{b^m}\jump{u^m}+\mean{\beta u^m}\jump{b^m}
    \\
    &-\mean{\dfrac{\beta u^mb^m}{W}}\dfrac{\mean{u^k}}{\mean{W}}\jump{u^k}\Big),
    \quad m=y,z,\\
-\dfrac12\jump{\dfrac{\beta\ux u^mb^mu^mb^m}{W^2}}
  &  +\jump{\dfrac{\beta u^mb^m}{W}}\mean{\dfrac{\ux u^mb^m}{W}}
  =-\dfrac{\beta_L\beta_R\left(b^m_Lb^m_Ru^m_Lu^m_R\right)}{2W_LW_R}\jump{\dfrac{\ux}{\beta}}
   \nonumber \\
    =&-\dfrac{\beta_L\beta_R\left(b^m_Lb^m_Ru^m_Lu^m_R\right)}{2W_LW_R\mean{\beta}}
    \left(\jump{\ux}-\mean{\dfrac{\ux}{\beta}}\jump{\beta}\right),
    \quad m=x,y,z,\\
 \nonumber
  -\jump{\dfrac{\beta\ux u^mb^mu^lb^l}{W^2}}
  &  +\jump{\dfrac{\beta u^mb^m}{W}}\mean{\dfrac{\ux u^lb^l}{W}}
    +\jump{\dfrac{\beta u^lb^l}{W}}\mean{\dfrac{\ux u^mb^m}{W}}
    \\
    =&-\dfrac{\beta_L\beta_R\left(b^m_Lb^l_Ru^m_Lu^l_R+b^m_Rb^l_Lu^m_Ru^l_L\right)}{2W_LW_R\mean{\beta}}
    \left(\jump{\ux}-\mean{\dfrac{\ux}{\beta}}\jump{\beta}\right),\nonumber
   \\ & \qquad  \ m,l=x,y,z,~m\neq l, \label{eq:RHSjump1d}
  \end{align}
where the symbol with the subscript $L$ (resp. $R$)  denotes
the left (resp. right) state 
and $\jump{\bx}$ does not appear. 
Using the identities \eqref{eq:RHSjump1a}-\eqref{eq:RHSjump1d} gives
  \begin{align}
\nonumber    -\dfrac12\jump{\beta&\ux\dfrac{(u^kb^k)^2}{W^2}}
    +\jump{\dfrac{\beta u^mb^m}{W}}\mean{\dfrac{\ux u^lb^l}{W}}
    \\ \nonumber
    =&\sum_{m=x,y,z}
    \left[-\dfrac12\jump{\dfrac{\beta\ux u^mb^mu^mb^m}{W^2}}
    +\jump{\dfrac{\beta u^mb^m}{W}}\mean{\dfrac{\ux u^mb^m}{W}}\right]
    \\
 \nonumber   &+\sum_{m,l=x,y,z,~m\neq l}
    \dfrac12\left[-\jump{\dfrac{\beta\ux u^mb^mu^lb^l}{W^2}}
      +\jump{\dfrac{\beta u^mb^m}{W}}\mean{\dfrac{\ux u^lb^l}{W}}
    +\jump{\dfrac{\beta u^lb^l}{W}}\mean{\dfrac{\ux u^mb^m}{W}}\right]
    \\ 
    =&-(b^m_Lu^m_L)(b^l_Ru^l_R) \dfrac{\beta_L\beta_R}{2W_LW_R\mean{\beta}}
    \left(\jump{\ux}-\mean{\dfrac{\ux}{\beta}}\jump{\beta}\right) 
    =:-\tau \left(\jump{\ux}-\mean{\dfrac{\ux}{\beta}}\jump{\beta}\right).
 \label{eq:RHSjump2} \end{align}

Substituting the identities
\eqref{eq:Vjump}, \eqref{eq:RHSjump1a}-\eqref{eq:RHSjump1d}, and \eqref{eq:RHSjump2} into \eqref{eq:conserFlux}, and equating the coefficients of the same jump terms gives
\begin{equation*}
  \left\{
    \begin{aligned}
      &F_1=\meanln{\rho}\mean{\ux},\\
      &\alpha_0F_1 +\mean{u^x}{F_2}+\mean{u^y}{F_3}+\mean{u^z}{F_4}
      -\mean{W}F_5+\mean{\bx}F_6+\mean{\by}F_7+\mean{\bz}F_8\\
      &=-\alpha_1\mean{\ux}
      +\dfrac12\left(\mean{(\by)^2} +\mean{(\bz)^2}\right)\mean{\ux}
      -\dfrac{\mean{W\bx}}{\mean{W}}\left(\mean{\by}\mean{\uy}+\mean{\bz}\mean{\uz}\right)
      +\tau\mean{\dfrac{\ux}{\beta}},\\
      &\mean{\beta}F_2-\dfrac{\mean{\beta}\mean{\ux}}{\mean{W}}F_5
      =\mean{\rho}-\alpha_1\mean{\beta}+2\alpha_1\mean{\dfrac{\beta\ux}{W^2}}\mean{\ux}
      +\dfrac12\left(\mean{(\by)^2}+\mean{(\bz)^2}\right)\mean{\beta}\\
      &+\dfrac{\mean{W\bx}}{\mean{W}}\left(\mean{\dfrac{\beta\uy\by}{W}}
      +\mean{\dfrac{\beta\uz\bz}{W}}\right)\dfrac{\mean{\ux}}{\mean{W}}
      -\tau,\\
      &\mean{\beta}F_3-\dfrac{\mean{\beta}\mean{\uy}}{\mean{W}}F_5
      =2\alpha_1\mean{\dfrac{\beta\ux}{W^2}}\mean{\uy}\\
      &-\dfrac{\mean{W\bx}}{\mean{W}}\left(\mean{\beta}\mean{\by}
        -\mean{\dfrac{\beta\uy\by}{W}}\dfrac{\mean{\uy}}{\mean{W}}
      -\mean{\dfrac{\beta\uz\bz}{W}}\dfrac{\mean{\uy}}{\mean{W}}\right),\\
      &\mean{\beta}F_4-\dfrac{\mean{\beta}\mean{\uz}}{\mean{W}}F_5
      =2\alpha_1\mean{\dfrac{\beta\ux}{W^2}}\mean{\uz}\\
      &-\dfrac{\mean{W\bx}}{\mean{W}}\left(\mean{\beta}\mean{\bz}
        -\mean{\dfrac{\beta\uy\by}{W}}\dfrac{\mean{\uz}}{\mean{W}}
      -\mean{\dfrac{\beta\uz\bz}{W}}\dfrac{\mean{\uz}}{\mean{W}}\right),\\
      &\mean{\beta}F_6=0,\\
      &\mean{\beta}F_7=\mean{\beta\ux}\mean{\by}-\dfrac{\mean{W\bx}}{\mean{W}}\mean{\beta\uy},\\
      &\mean{\beta}F_8=\mean{\beta\ux}\mean{\bz}-\dfrac{\mean{W\bx}}{\mean{W}}\mean{\beta\uz},
    \end{aligned}
  \right.
\end{equation*}
from which we can directly get the following four components of the entropy conservative flux
\begin{equation}\label{eq:ecFlux1D1}
  \left\{
    \begin{aligned}
      &F_1=\meanln{\rho}\mean{\ux},\\
      &F_6=0,\\
      &F_7=\dfrac{\mean{\beta\ux}\mean{\by}}{\mean{\beta}}-\dfrac{\mean{W\bx}\mean{\beta\uy}}{\mean{W}\mean{\beta}},\\
      &F_8=\dfrac{\mean{\beta\ux}\mean{\bz}}{\mean{\beta}}
      -\dfrac{\mean{W\bx}\mean{\beta\uz}}{\mean{W}\mean{\beta}}.
    \end{aligned}
  \right.
\end{equation}
The four components $F_2,F_3,F_4,F_5$ of the entropy conservative flux
satisfy the following  system of linear equations
\begin{equation}\label{eq:ecFlux1D2}
  \begin{bmatrix}
    \mean{\ux} & \mean{\uy} & \mean{\uz} & -\mean{W} \\
    \mean{\beta} & 0 & 0 & -\frac{\mean{\beta}\mean{\ux}}{\mean{W}} \\
    0 & \mean{\beta} & 0 & -\frac{\mean{\beta}\mean{\uy}}{\mean{W}} \\
    0 & 0 & \mean{\beta} & -\frac{\mean{\beta}\mean{\uz}}{\mean{W}} \\
  \end{bmatrix}
  \begin{bmatrix}
    F_2 \\ F_3 \\ F_4 \\ F_5 \\
  \end{bmatrix}
  =\begin{bmatrix}
    (RHS)_1 \\ (RHS)_2 \\ (RHS)_3 \\ (RHS)_4
  \end{bmatrix},
\end{equation}
where
\begin{equation*}
  \begin{aligned}
    (RHS)_1=&-\alpha_0F_1-\alpha_1\mean{\ux}
    +\dfrac12\left(\mean{(\by)^2} +\mean{(\bz)^2}\right)\mean{\ux}
    -\dfrac{\mean{W\bx}}{\mean{W}}\left(\mean{\by}\mean{\uy}+\mean{\bz}\mean{\uz}\right)\\
    &+\tau\mean{\dfrac{\ux}{\beta}}-\mean{\by}F_7-\mean{\bz}F_8,\\
    (RHS)_2=&\mean{\rho}-\alpha_1\mean{\beta}+2\alpha_1\mean{\dfrac{\beta\ux}{W^2}}\mean{\ux}
    +\dfrac12\left(\mean{(\by)^2}+\mean{(\bz)^2}\right)\mean{\beta}\\
    &+\dfrac{\mean{W\bx}}{\mean{W}}\left(\mean{\dfrac{\beta\uy\by}{W}}
    +\mean{\dfrac{\beta\uz\bz}{W}}\right)\dfrac{\mean{\ux}}{\mean{W}}
    -\tau,\\
    (RHS)_3=&2\alpha_1\mean{\dfrac{\beta\ux}{W^2}}\mean{\uy}
    -\dfrac{\mean{W\bx}}{\mean{W}}\left(\mean{\beta}\mean{\by}
      -\mean{\dfrac{\beta\uy\by}{W}}\dfrac{\mean{\uy}}{\mean{W}}
    -\mean{\dfrac{\beta\uz\bz}{W}}\dfrac{\mean{\uy}}{\mean{W}}\right),\\
    (RHS)_4=&2\alpha_1\mean{\dfrac{\beta\ux}{W^2}}\mean{\uz}
    -\dfrac{\mean{W\bx}}{\mean{W}}\left(\mean{\beta}\mean{\bz}
      -\mean{\dfrac{\beta\uy\by}{W}}\dfrac{\mean{\uz}}{\mean{W}}
    -\mean{\dfrac{\beta\uz\bz}{W}}\dfrac{\mean{\uz}}{\mean{W}}\right).
  \end{aligned}
\end{equation*}
Solving the system \eqref{eq:ecFlux1D2} can give the explicit expressions of $F_2,F_3,F_4,F_5$
as follows
\begin{equation}\label{eq:ecFlux1D3}
  \left\{
    \begin{aligned}
      F_5&=\mathcal{D}^{-1}\left[\mean{\ux}(RHS)_2+\mean{\uy}(RHS)_3+\mean{\uz}(RHS)_4-\mean{\beta}(RHS)_1\right],\\
      F_2&=\mean{\ux}F_5/\mean{W}+(RHS)_2/\mean{\beta},\\
      F_3&=\mean{\uy}F_5/\mean{W}+(RHS)_3/\mean{\beta},\\
      F_4&=\mean{\uz}F_5/\mean{W}+(RHS)_4/\mean{\beta},
    \end{aligned}
  \right.
\end{equation}
where $\mathcal{D}=\dfrac{\mean{\beta}(\mean{W}^2-\sum\mean{u^k}^2)}{\mean{W}}$.

\begin{thm}
  For the $x$-splitting system  \eqref{eq:RMHDsym1D},
  the flux $\bF_1^{EC}$ presented in \eqref{eq:ecFlux1D1}-\eqref{eq:ecFlux1D3} is entropy conservative  and
  consistent with the physical flux \eqref{eq:RMHDdiv2}.
\end{thm}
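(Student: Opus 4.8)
The plan is to prove the two assertions separately: first that $\bF_1^{EC}$ obeys the entropy conservation condition \eqref{eq:conserFlux}, and then that it collapses to the physical flux \eqref{eq:RMHDdiv2} on the diagonal $\bU_L=\bU_R$. For the entropy-conservation part I would run the derivation preceding the statement as a verification. The left-hand side $\jump{\bV}^\mathrm{T}\cdot\bF_1^{EC}$ is expanded componentwise through the jump relations \eqref{eq:Vjump}, which rewrite every $\jump{\bV_i}$ as an explicit linear combination of the jumps of the parameter vector $\widetilde{\bm{V}}=(\rho,\beta,\ux,\uy,\uz,\bx,\by,\bz)^\mathrm{T}$ with coefficients built only from arithmetic and logarithmic means. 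The right-hand side $\jump{\psi_1}-\jump{\Phi}\mean{\Bx}$ is expanded using the algebraic identities \eqref{eq:RHSjump1a}--\eqref{eq:RHSjump2}; the crucial point to confirm here is that these identities collect all contributions so that the coefficient of $\jump{\bx}$ on the right-hand side vanishes identically, matching the fact that $\jump{\bx}$ enters the left-hand side only through the single term $\mean{\beta}F_6\jump{\bx}$.

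Equating the coefficients of the eight independent parameter jumps then yields eight scalar equations. The four equations attached to $\jump{\rho}$, $\jump{\bx}$, $\jump{\by}$, $\jump{\bz}$ are already decoupled and give $F_1,F_6,F_7,F_8$ directly as in \eqref{eq:ecFlux1D1}, while the four equations attached to $\jump{\beta}$, $\jump{\ux}$, $\jump{\uy}$, $\jump{\uz}$ assemble into the linear system \eqref{eq:ecFlux1D2} for $(F_2,F_3,F_4,F_5)$. I would then confirm that \eqref{eq:ecFlux1D3} solves this system, which amounts to checking that the coefficient matrix has determinant $\mathcal{D}=\mean{\beta}(\mean{W}^2-\sum_k\mean{u^k}^2)/\mean{W}$ and carrying out the back-substitution. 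Symmetry of the resulting flux in $\bU_L,\bU_R$ is inherited from the symmetry of every mean and logarithmic mean and of the bilinear products such as $\bx_L\bx_R$; together with consistency this shows $\bF_1^{EC}$ is admissible in the sense required, so \eqref{eq:conserFlux} holds.

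For consistency I would set $\bU_L=\bU_R=\bU$, so that every jump vanishes, each arithmetic mean and each logarithmic mean collapses to the corresponding pointwise value, and the auxiliary scalars $\alpha_0,\alpha_1,\tau$ reduce to their evaluations at $\bU$. The density component is immediate, $F_1=\meanln{\rho}\mean{\ux}\to\rho\ux=\rho W\vx=D\vx$, matching the first entry of $\bF_1$; likewise $F_6=0$ reproduces the always-zero parallel magnetic component of \eqref{eq:RMHDdiv2}, and $F_7,F_8$ collapse to $\ux\by-\bx\uy=\vx\By-\vy\Bx$ and $\ux\bz-\bx\uz=\vx\Bz-\vz\Bx$ after rewriting $b^\alpha$, $u^\alpha$ in terms of $\bv$ and $\bm{B}$, i.e.\ the transverse induction fluxes. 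The remaining step is to check that the reduced system \eqref{eq:ecFlux1D2}, whose right-hand side now contains only pointwise quantities, returns $(F_2,F_3,F_4,F_5)$ equal to the momentum and energy entries of $\bF_1$, namely $\bm{m}\vx-\Bx(\bm{B}/W^2+(\vB)\bv)+\pt\be_1$ and $\mx$.

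The main obstacle is precisely this last verification for the momentum and energy fluxes. It requires unwinding the definitions of $\bm{m}$, $\pt$, and $E$ in \eqref{eq:RMHDdiv2} in terms of $\rho$, $W$, $u^\alpha$, $b^\alpha$ and $\bb=(b^k)^2-(u^kb^k)^2/W^2$, and then showing that the solution \eqref{eq:ecFlux1D3} of the collapsed linear system reproduces these expressions after the cancellations driven by the normalization $W^2-\sum_k(u^k)^2=1$ appearing in $\mathcal{D}$. Every other component is routine; the careful bookkeeping needed to make the coefficient of $\jump{\bx}$ vanish in the entropy-conservation step and the algebra in the momentum/energy consistency check are the only places demanding genuine attention.
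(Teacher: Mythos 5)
Your proposal follows the same two-step route as the paper: entropy conservation is inherited from the construction (expand both sides of \eqref{eq:conserFlux} via \eqref{eq:Vjump} and \eqref{eq:RHSjump1a}--\eqref{eq:RHSjump2}, check that the coefficient of $\jump{\bx}$ on the right-hand side cancels so that $F_6=0$ is forced, and equate the remaining coefficients), and consistency is obtained by collapsing to $\bU_L=\bU_R$, with the genuine work in the momentum/energy block $(F_2,F_3,F_4,F_5)$, which the paper handles by row elimination of the collapsed system to get $F_5=\mx$ and then back-substitution. However, there is one genuine gap: you never show that the linear system \eqref{eq:ecFlux1D2} is solvable for \emph{arbitrary} pairs of admissible states, i.e.\ that
\begin{equation*}
  \mathcal{D}=\dfrac{\mean{\beta}\left(\mean{W}^2-\sum\mean{u^k}^2\right)}{\mean{W}}>0 .
\end{equation*}
``Checking that the coefficient matrix has determinant $\mathcal{D}$'' is a computation; it does not tell you that this determinant is nonzero, and without that, the formulas \eqref{eq:ecFlux1D3} may involve division by zero and the claim of entropy conservation is vacuous at such states.

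The point is that the pointwise normalization $W^2-\sum_k(u^k)^2=1$, which you invoke in the consistency check (where indeed $\mathcal{D}$ collapses to $\beta/W$), is \emph{not} preserved under taking arithmetic means of $W$ and $u^k$ separately, so positivity of $\mean{W}^2-\sum\mean{u^k}^2$ off the diagonal requires an argument. This is the only analytic (non-algebraic) ingredient of the proof, and the paper devotes the first half of its proof to it: writing
\begin{equation*}
  \mean{W}^2-\sum\mean{u^k}^2
  =\dfrac{1}{2}\left(1+\sqrt{1+\sum(u^k_L)^2}\,\sqrt{1+\sum(u^k_R)^2}-\sum u^k_Lu^k_R\right)
\end{equation*}
and applying the Cauchy inequality, $\sum u^k_Lu^k_R\leqslant\sqrt{\sum(u^k_L)^2}\sqrt{\sum(u^k_R)^2}$, gives strict positivity, hence $\mathcal{D}>0$ and a well-defined flux. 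Adding this step would make your proof complete; everything else in your outline matches the paper's argument.
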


\begin{proof}
  First of all, it needs to show that the entropy conservative flux is well defined,
  i.e.,
  \begin{equation}
    \mathcal{D}=\dfrac{\mean{\beta}\left(\mean{W}^2-\sum\mean{u^k}^2\right)}{\mean{W}}>0,
  \end{equation}
  which is equivalent to that $\mean{W}^2-\sum\mean{u^k}^2>0$,
  because $\mean{W}>0$ and $\mean{\beta}>0$.
%
 In fact, using the Cauchy inequality gives
  \begin{align*}
    \mean{W}^2-\sum\mean{u^k}^2&=\dfrac{1}{2}\left(1+\sqrt{1+\sum
    (u^k_L)^2}\sqrt{1+\sum (u^k_R)^2}-\sum u^k_Lu^k_R\right)\\
    &>\dfrac{1}{2}\left(\sqrt{\sum (u^k_L)^2}\sqrt{\sum (u^k_R)^2}-\sum
    u^k_Lu^k_R\right)\geqslant 0.
  \end{align*}

  Next, we  verify that the entropy conservative flux
  \eqref{eq:ecFlux1D1}-\eqref{eq:ecFlux1D2} is consistent with the flux $\bF_1$.
  If letting $\bU_L=\bU_R=\bU$, then we can simplify \eqref{eq:ecFlux1D1} as follows
  \begin{equation}\label{eq:consflux1}
    \left\{
      \begin{aligned}
        F_1=&D\vx,\\
        F_6=&0,\\
        F_7=&\ux\by-\uy\bx=\vx\By-\Bx\vy,\\
        F_8=&\ux\bz-\uz\bx=\vx\Bz-\Bx\vz,
      \end{aligned}
    \right.
  \end{equation}
 and the linear system \eqref{eq:ecFlux1D2} can be rewritten as follows
  \begin{equation*}
    \begin{bmatrix}
      {\ux} & {\uy} & {\uz} & -{W} \\
      {\beta} & 0 & 0 & -\frac{\beta\ux}{W} \\
      0 & {\beta} & 0 & -\frac{\beta\uy}{W} \\
      0 & 0 & {\beta} & -\frac{\beta\uz}{W} \\
    \end{bmatrix}
    \begin{bmatrix}
      F_2 \\ F_3 \\ F_4 \\ F_5 \\
    \end{bmatrix}
    =\begin{bmatrix}
      (RHS)_1 \\ (RHS)_2 \\ (RHS)_3 \\ (RHS)_4
    \end{bmatrix},
  \end{equation*}
  where
  \begin{equation*}
    \begin{aligned}
      (RHS)_1=&-\left(1+\dfrac{p}{(\Gamma-1)\rho}\right)F_1-\dfrac12(\bx)^2\ux
      +\dfrac12\left((\by)^2 +(\bz)^2\right)\ux
      -\bx\left({\by}{\uy}+{\bz}{\uz}\right)\\
      &+\dfrac{\ux}{2}(b^0)^2-{\by}(\ux\by-\uy\bx)-{\bz}(\ux\bz-\uz\bx)\\
      =&-\left(1+\dfrac{p}{(\Gamma-1)\rho}\right)F_1
      -\dfrac12\ux\bb,\\
      (RHS)_2=&{\rho}-\dfrac12\beta(\bx)^2+{\dfrac{\beta(\ux)^2(\bx)^2}{W^2}}
      +\dfrac12\left({(\by)^2}+{(\bz)^2}\right){\beta}
      +\left({\beta\uy\by}
      +{\beta\uz\bz}\right)\dfrac{{\ux}\bx}{{W^2}}
      -\dfrac12\beta(b^0)^2,\\
      =&\beta\left(\pt-\dfrac{\bx\Bx}{W}\right),\\
      (RHS)_3=&2\alpha_1{\dfrac{\beta\ux}{W^2}}{\uy}
      -\bx\left({\beta}{\by}
        -{\dfrac{\beta\uy\by\uy}{W^2}}
      -{\dfrac{\beta\uz\bz\uy}{W^2}}\right)=-\dfrac{\beta\bx\By}{W},\\
      (RHS)_4=&2\alpha_1{\dfrac{\beta\ux}{W^2}}{\uz}
      -\bx\left({\beta}{\bz}
        -{\dfrac{\beta\uy\by\uz}{W}}
      -{\dfrac{\beta\uz\bz\uz}{W}}\right)=-\dfrac{\beta\bx\Bz}{W}.
    \end{aligned}
  \end{equation*}
Operating the row  elimination  gives
  \begin{equation}\label{eq:leqcons}
    \begin{bmatrix}
      0 & 0 & 0 & -1/{W} \\
      1 & 0 & 0 & -{\ux}/{W} \\
      0 & 1 & 0 & -{\uy}/{W} \\
      0 & 0 & 1 & -{\uz}/{W} \\
    \end{bmatrix}
    \begin{bmatrix}
      F_2 \\ F_3 \\ F_4 \\ F_5 \\
    \end{bmatrix}=
    \begin{bmatrix}
      -\rho hW\vx-\ux\bb+\bx(\vB) \\
      \pt-{\bx\Bx}/{W} \\
      -{\bx\By}/{W} \\
      -{\bx\Bz}/{W} \\
    \end{bmatrix},
  \end{equation}
  which implies
  \begin{equation}\label{eq:consflux2}
      F_5=\rho hW^2\vx+\bb W^2\vx-W\bx(\vB)=(\rho hW^2+\BB)\vx-(\vB)\Bx=\mx.
  \end{equation}
  Substituting such $F_5$ back into \eqref{eq:leqcons} yields
  \begin{equation}\label{eq:consflux3}
    \begin{aligned}
      F_2=&\pt-{\bx\Bx}/{W}+\ux\mx/W=\mx\vx-\Bx({\Bx}/W^2+(\vB)\vx)+\pt,\\
      F_3=&-{\bx\By}/{W}+\uy\mx/W=\my\vx-\Bx({\By}/W^2+(\vB)\vy),\\
      F_4=&-{\bx\Bz}/{W}+\uz\mx/W=\mz\vx-\Bx({\Bz}/W^2+(\vB)\vz).
    \end{aligned}
  \end{equation}
  From \eqref{eq:consflux1}, \eqref{eq:consflux2}, and \eqref{eq:consflux3},
  one can see that the entropy conservative flux is consistent.
\end{proof}

\begin{rmk}
The entropy conservative fluxes for the $y$- and $z$-splitting system can be derived by using  the rotational invariance and alternating the indices $x,y,z$ in \eqref{eq:ecFlux1D1}-\eqref{eq:ecFlux1D3}.
\end{rmk}

\begin{rmk}
 The entropy conservative flux presented recently in \cite{Wu2019}
  is consistent, but its parallel magnetic component does not always vanish,
  so that it may lead to a large error in   the parallel component of the magnetic field.
  It will be compared to ours  by some numerical tests in Section \ref{section:Num}.
\end{rmk}


\begin{rmk}
  If the magnetic field disappears, i.e. $\bm{B}\equiv 0$, our entropy conservative flux reduces to
  \begin{equation*}
    \begin{aligned}
      F_1=&\meanln{\rho}\mean{u^x},\\
      F_5=&\left(\mean{W}^2-\sum\mean{u^k}^2\right)^{-1}\mean{W}\left(\mean{\rho}\mean{u^x}/\mean{\beta}+\alpha_0F_1\right),\\
      F_2=&\dfrac{\mean{u^x}}{\mean{W}}F_5+\dfrac{\mean{\rho}}{\mean{\beta}},\\
      F_3=&\dfrac{\mean{u^y}}{\mean{W}}F_5,\
      F_4=\dfrac{\mean{u^z}}{\mean{W}}F_5,\
      F_6=F_7=F_8=0,
    \end{aligned}
  \end{equation*}
  which is the same as the flux  in (3.13) of \cite{Wu2019}.
  Formally, those expressions are simpler than those in \cite{Duan2019},
  where the primitive variable vector is used as the parameter vector.
\end{rmk}

\subsection{Entropy stable fluxes}\label{section:3.3}
For the sake of simplicity, the entropy stable flux is chosen as the Lax-Friedrichs flux
\begin{equation}
  \hat\bF^{ES}(\bU_L,\bU_R)=\dfrac12\left(\bF_1(\bU_L)+\bF_1(\bU_R)\right)-\dfrac12\alpha\left(\bU_R-\bU_L\right),
\end{equation}
where $\alpha=\max\{\abs{\varrho(\bU_L)}, \abs{\varrho(\bU_R)}\}$
and $\varrho(\bU)$ is the spectral radius of $\pd{\bF_1}{\bU}+\Phi'\pd{\Bx}{\bU}$.
\subsection{Time discretization}
The semi-discrete scheme \eqref{eq:1DES} is further approximated
in time by using the following third-order accurate explicit SSP (strong stability preserving) Runge-Kutta method
\begin{equation}
  \label{eq:rk3}
\begin{aligned}
  &\bU^{(1)}=\bU^n+\Delta t \bm{L}(\bU^n),\\
  &\bU^{(2)}=\dfrac34\bU^n+\dfrac14\left(\bU^{(1)}+\Delta t \bm{L}(\bU^{(1)})\right),\\
  &\bU^{n+1}=\dfrac13\bU^n+\dfrac23\left(\bU^{(2)}+\Delta t \bm{L}(\bU^{(2)})\right),
\end{aligned}
\end{equation}
where $\bm{L}(\bU)$ denotes the spatial discrete operator of the DG scheme \eqref{eq:1DES}.

\section{Two-dimensional entropy stable DG schemes}\label{section:MultiD}
This section discusses the extension of the above entropy stable nodal DG schemes to the
two-dimensional special RMHD equations  on
the Cartesian mesh. They will be built on  the approximation of the 2D RMHD equations
with source terms
\begin{equation}\label{eq:RMHDsym2D}
  \pd{\bU}{t}+\pd{\bF_1(\bU)}{x}+\pd{\bF_2(\bU)}{y}
  =-\Phi'(\bV)^\mathrm{T}\pd{\Bx}{x}-\Phi'(\bV)^\mathrm{T}\pd{\By}{y},
\end{equation}
by using the tensor product technique.

The computational domain $\Omega$ is divided into  $N_x\times N_y$ cells,
$I_{i,j}=J_i\times K_j$ with $J_i = [x_\xl,x_\xr]$, $K_j=[y_\yl,y_\yr]$,
$\Delta x_i=x_\xr-x_\xl$, and $\Delta y_j=y_\yr-y_\yl$,
$i=1,\dots,N_x$, $j=1,\dots,N_y$.
The spatial DG approximation space is taken as
\begin{equation}
  \bW^r_h=\left\{\bw_h\in [L^2(\Omega)]^8:\bw_h|_{I_{i,j}}\in
  [P^r(J_i)]^8\otimes[P^r(K_j)]^8,i=1,2,\dots,N_x,j=1,2,\dots,N_y\right\}.
\end{equation}
Following the 1D case, the  spatial DG approximation of \eqref{eq:RMHDsym2D} is to find
$\bU_h\in\bW^r_h$  such that
\begin{align}\label{eq:2DDG}
  \int_{I_{i,j}}\pd{\bU_h^\mathrm{T}}{t}\bw_h\dd x\dd y
  =&\int_{I_{i,j}}\bF_1(\bU_h)^\mathrm{T}\pd{\bw_h}{x}\dd x\dd y
  -\int_{I_{i,j}}\Phi'(\bU_h)\pd{(\Bx)_h}{x}\bw_h\dd x\dd y \nonumber \\
  &+\int_{I_{i,j}}\bF_2(\bU_h)^\mathrm{T}\pd{\bw_h}{y}\dd x\dd y
  -\int_{I_{i,j}}\Phi'(\bU_h)\pd{(\By)_h}{y}\bw_h\dd x\dd y \nonumber \\
  &-\int_{K_j}\left[\hat{\bF}_\xr^\mathrm{T}(y)\bw_h(x_\xr^-,y)
  -\hat{\bF}_\xl^\mathrm{T}(y)\bw_h(x_\xl^+,y)\right]\dd y \nonumber \\
  &-\int_{J_i}\left[\hat{\bG}_\yr^\mathrm{T}(x)\bw_h(x,y_\yr^-)
  -\hat{\bG}_\yl^\mathrm{T}(x)\bw_h(x,y_\yl^+)\right]\dd x \nonumber \\
  &-\int_{K_j}\frac12\Phi'(\bU_h(x_\xr^-,y))\jump{(\Bx)_h}_\xr(y)\bw_h(x_\xr^-,y)\dd y \nonumber \\
  &-\int_{K_j}\frac12\Phi'(\bU_h(x_\xl^+,y))\jump{(\Bx)_h}_\xl(y)\bw_h(x_\xl^+,y)\dd y \nonumber \\
  &-\int_{J_i}\frac12\Phi'(\bU_h(x,y_\yr^-))\jump{(\By)_h}_\yr(x)\bw_h(x,y_\yr^-)\dd x \nonumber \\
  &-\int_{J_i}\frac12\Phi'(\bU_h(x,y_\yl^+))\jump{(\By)_h}_\yl(x)\bw_h(x,y_\yl^+)\dd x,
\end{align}
 for any $\bw_h\in\bW^r_h$ and
$i=1,2,\cdots,N_x,j=1,2,\cdots,N_y$.

If using the tensor product Legendre-Gauss-Lobatto quadrature rule and introducing
the notations
\begin{align*}
  &x_i(\xi)=\frac12(x_\xl+x_\xr)+\frac{\xi}{2}\Delta x_i,~
  y_j(\xi)=\frac12(y_\yl+y_\yr)+\frac{\xi}{2}\Delta y_j,~
  \bU_{i,j}^{l,m}=\bU_h(x_i(\xi_l),y_j(\xi_m)),\\
  &\bF_{i,j,1}^{l,m}=\bF_1(\bU_{i,j}^{l,m}),\quad
  \bF_{i,j,2}^{l,m}=\bF_2(\bU_{i,j}^{l,m}),\quad
  \Phi_{i,j}^{'l,m}=\Phi'(\bV(\bU_{i,j}^{l,m})),\quad l,m=0,1,\cdots,r,\\
  &\vec{\bF}_{1,*,m}^{i,j}
  :=\begin{bmatrix} \bF_{i,j,1,*}^{0,m},\bF_{i,j,1,*}^{1,m},\cdots,\bF_{i,j,1,*}^{r,m}\end{bmatrix}
  =\begin{bmatrix} \hat\bF_\xl(\bU_{i-1,j}^{r,m},\bU_{i,j}^{0,m}),0,\cdots,0,\hat\bF_\xr(\bU_{i,j}^{r,m},\bU_{i+1,j}^{0,m})\end{bmatrix},\\
  &\vec{\bF}_{2,*,l}^{i,j}
  :=\begin{bmatrix} \bF_{i,j,2,*}^{l,0},\bF_{i,j,2,*}^{l,1},\cdots,\bF_{i,j,2,*}^{l,r}\end{bmatrix}
  =\begin{bmatrix} \hat\bG_\yl(\bU_{i,j-1}^{l,r},\bU_{i,j}^{l,0}),0,\cdots,0,\hat\bG_\yr(\bU_{i,j}^{l,r},\bU_{i,j+1}^{l,0})\end{bmatrix},\\
  &\vec{\bs}^{i,j}_m
  :=\begin{bmatrix} \bs_{i,j}^{0,m},\bs_{i,j}^{1,m},\cdots,\bs_{i,j}^{r,m}\end{bmatrix}\\
  &~~~~~=\begin{bmatrix} \dfrac12(\Phi_{i,j}^{'l,0})^\mathrm{T}[(\Bx)_{i,j}^{0,m}-(\Bx)_{i-1,j}^{r,m}],0,
  \cdots,0,-\dfrac12(\Phi_{i,j}^{'r,m})^\mathrm{T}[(\Bx)_{i+1,j}^{0,m}-(\Bx)_{i,j}^{r,m}]\end{bmatrix},\\
  &\vec{\br}^{i,j}_l
  :=\begin{bmatrix} \br_{i,j}^{l,0},\br_{i,j}^{l,1},\cdots,\br_{i,j}^{l,r}\end{bmatrix}\\
  &~~~~~=\begin{bmatrix} \dfrac12(\Phi_{i,j}^{'l,0})^\mathrm{T}[(\By)_{i,j}^{l,0}-(\By)_{i,j-1}^{l,r}],0,
  \cdots,0,-\dfrac12(\Phi_{i,j}^{'l,r})^\mathrm{T}[(\By)_{i,j+1}^{l,0}-(\By)_{i,j}^{l,r}]\end{bmatrix},
\end{align*}
then the 2D semi-discrete  entropy stable nodal DG schemes \eqref{eq:2DDG} for the $(i,j)$-th cell can be derived
as follows
\begin{align}\label{eq:2DES}
  &\dfrac{\dd \bU_{i,j}^{l,m}}{\dd t}
  +\dfrac{4}{\Delta x_i}\sum_{p=0}^r {D}_{lp}\bF_{1}^{EC}(\bU_{i,j}^{p,m},\bU_{i,j}^{l,m})
  +\dfrac{4}{\Delta y_j}\sum_{q=0}^r {D}_{mq}\bF_{2}^{EC}(\bU_{i,j}^{l,q},\bU_{i,j}^{l,m}) \nonumber\\
  &+\dfrac{2}{\Delta x_i}\sum_{p=0}^r {D}_{lp}(\Phi_{i,j}^{'l,m})^{\mathrm{T}}(\Bx)_{i,j}^{p,m}
   +\dfrac{2}{\Delta y_j}\sum_{q=0}^r {D}_{mq}(\Phi_{i,j}^{'l,m})^{\mathrm{T}}(\By)_{i,j}^{l,q} \nonumber\\
   &=\dfrac{2}{\Delta x_i}\dfrac{\tau_l}{\omega_l}\left(\bF_{i,j,1}^{l,m}-\bF_{i,j,1,*}^{l,m}\right)
   +\dfrac{2}{\Delta y_j}\dfrac{\tau_m}{\omega_m}\left(\bF_{i,j,2}^{l,m}-\bF_{i,j,2,*}^{l,m}\right)
   +\dfrac{2}{\Delta x_i}\dfrac{\tau_l}{\omega_l}\bs_{i,j}^{l,m}
  +\dfrac{2}{\Delta y_j}\dfrac{\tau_m}{\omega_m}\br_{i,j}^{l,m},
\end{align}
for $l,m=0,\cdots,r$, where $\bF_{1}^{EC}$ and $\bF_{2}^{EC}$ are the two-point entropy conservative fluxes in the
$x$- and $y$-directions, respectively, while
$\hat\bF_\xr$ and $\hat\bG_\yr$ are the entropy stable fluxes in the $x$- and
$y$-directions, respectively.
The 2D fully discrete explicit nodal DG schemes will be gotten
by approximating the time derivatives in \eqref{eq:2DES}  by using the third-order Runge-Kutta method \eqref{eq:rk3}.

\section{Numerical results}\label{section:Num}
This section conducts several numerical experiments to validate the performance and accuracy
of our entropy stable DG schemes for the 1D and 2D ideal special RMHD problems.
%
In the light of the article length, here only present numerical results for $r=2$ in the DG approximation
space, the CFL number as $0.2$, and  the adiabatic index $\Gamma=5/3$, unless otherwise stated.
\subsection{1D case}
\begin{example}[1D Alfv\'en wave]\label{ex:acc1D}\rm
  This test is used to verify the accuracy.
  The computational domain $[0,1]$ with periodic boundary conditions
  is divided into $N_x$ uniform cells.
  The exact solutions \cite{Zhao2017} are given by
  \begin{align*}
    &\rho(x, t)=1, \quad \vx(x, t)=0,\quad \vy(x, t)=0.1 \sin (2 \pi(x+t / \kappa)),\quad \vz(x, t)=0.1 \cos (2 \pi(x+t / \kappa)),\\
    &\Bx(x, t)=1, \quad \By(x, t)=\kappa \vy(x, t),\quad \Bz(x, t)=\kappa \vz(x, t),\quad  p(x, t)=0.1,
  \end{align*}
  where $\kappa=\sqrt{1+\rho h W^{2}}$.

Table \ref{tab:acc1D} lists the errors and the orders of convergence in $\By$ at $t=1$
obtained by using our entropy stable DG scheme.
It is seen that our scheme gets the $(r+1)$th-order accuracy as expected.
\end{example}

\begin{table}[!ht]
  \centering
  \begin{tabular}{r|cc|cc|cc} \hline
 $N_x$& $\ell^1$ error & order & $\ell^2$ error & order & $\ell^\infty$ error & order \\
 \hline
 20 & 4.395e-05 &  -   & 5.799e-05 &  -   & 1.641e-04 &  -   \\
 40 & 5.398e-06 & 3.03 & 7.575e-06 & 2.94 & 2.260e-05 & 2.86 \\
 80 & 6.685e-07 & 3.01 & 9.696e-07 & 2.97 & 2.951e-06 & 2.94 \\
160 & 8.322e-08 & 3.01 & 1.227e-07 & 2.98 & 3.766e-07 & 2.97 \\
320 & 1.038e-08 & 3.00 & 1.543e-08 & 2.99 & 4.756e-08 & 2.99 \\
 \hline
  \end{tabular}
  \caption{Example \ref{ex:acc1D}: Errors and orders of convergence in $\By$ at $t=1$.}
  \label{tab:acc1D}
\end{table}

\begin{example}[Riemann problem {\uppercase\expandafter{\romannumeral1}}]\label{ex:RP1}\rm
  The initial data of the first 1D Riemann problem \cite{Zhao2017} are
  \begin{equation*}
    (\rho,\vx,\vy,\vz,\Bx,\By,\Bz,p)=\begin{cases}
      (1,~0,~0,~0,~0.5,~1,~0,~1),        &\quad x<0.5, \\
      (0.125,~0,~0,~0,~0.5,-1,~0,~0.1), &\quad x>0.5,
    \end{cases}
  \end{equation*}
  with $\Gamma=2$.
As the time increases, the initial discontinuity will be decomposed into
a left-moving fast rarefaction wave, a slow compound wave, a contact
discontinuity, a right-moving slow shock wave, and a right-moving fast
rarefaction wave. 

The rest-mass density $\rho$, the Lorentz factor $W$, and the $y$ component
of the magnetic field $\By$ at $t=0.4$ obtained by the
entropy stable DG schemes with $800$ cells are shown in Figure \ref{fig:RP1}.
One can see that our numerical solutions (``$\circ$'') are in good agreement with the
reference solutions (solid line) and the discontinuities are well captured.
The reference solutions are obtained by using a first-order finite volume scheme with HLLD flux on a very fine mesh.
It is worth noting that some slightly numerical oscillations are observed, similar to the results obtained by the $P^K$-based non-central DG method in \cite{Zhao2017} for $K=2,3$, although the TVB limiter \cite{Chen2017} has been performed with the parameter $M=10$ on the local characteristic fields. 
On the other hand, as shown in \cite{Chen2017}, the TVB limiter  cannot
guarantee the entropy non-increasing property in the case
that for the system it is performed on the local characteristic fields,
even though it does not destroy the entropy stable property in the scalar case.
It will be given later  the evolution of the total entropy in several 2D examples
to check whether the total entropy decays with time as expected.
\end{example}
\begin{figure}[ht!]
  \begin{subfigure}[b]{0.32\textwidth}
    \centering
    \includegraphics[width=1.0\textwidth, trim=40 0 50 30, clip]{./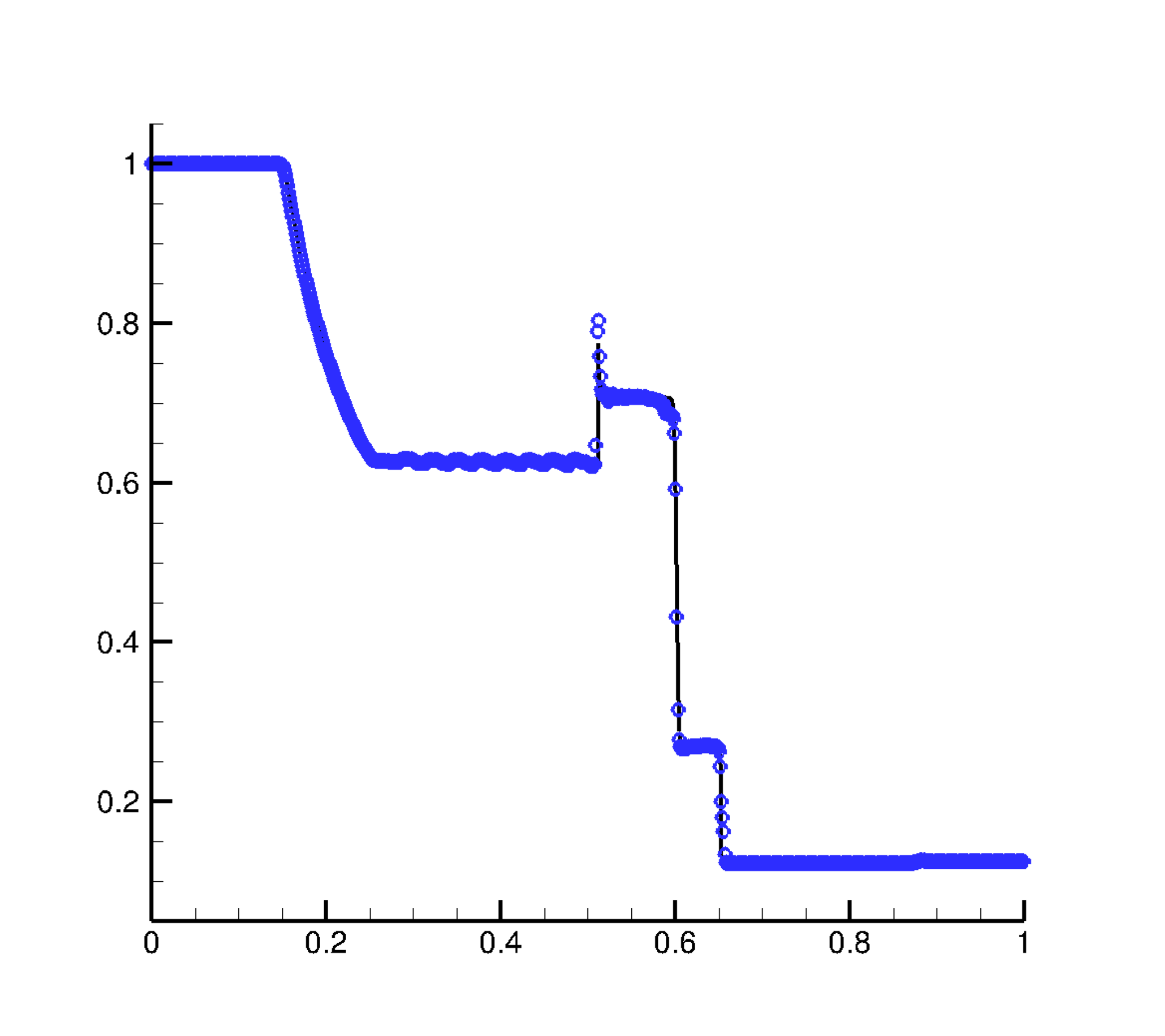}
  \end{subfigure}
  \begin{subfigure}[b]{0.32\textwidth}
    \centering
    \includegraphics[width=1.0\textwidth, trim=40 0 50 30, clip]{./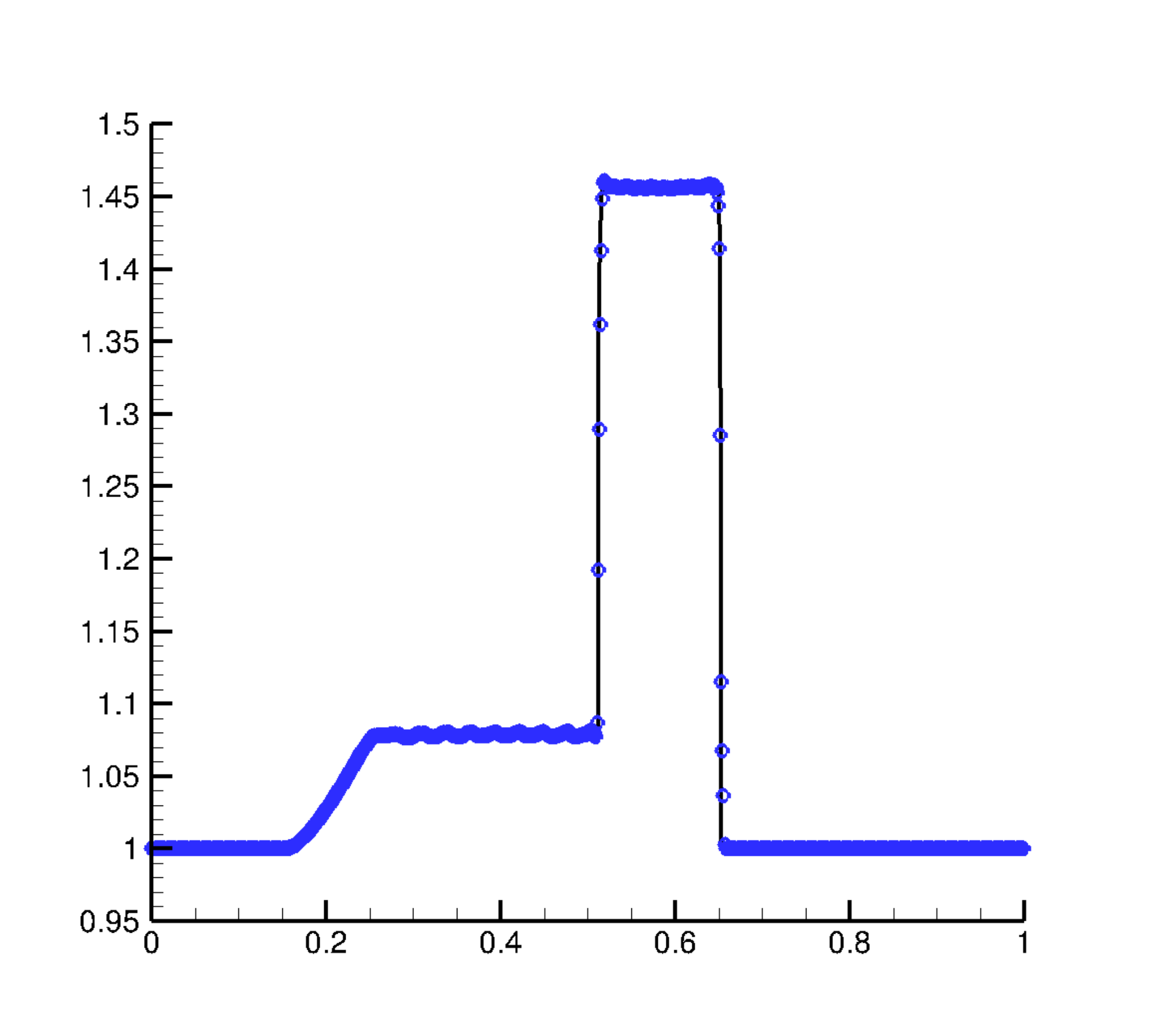}
  \end{subfigure}
  \begin{subfigure}[b]{0.32\textwidth}
    \centering
    \includegraphics[width=1.0\textwidth, trim=40 0 50 30, clip]{./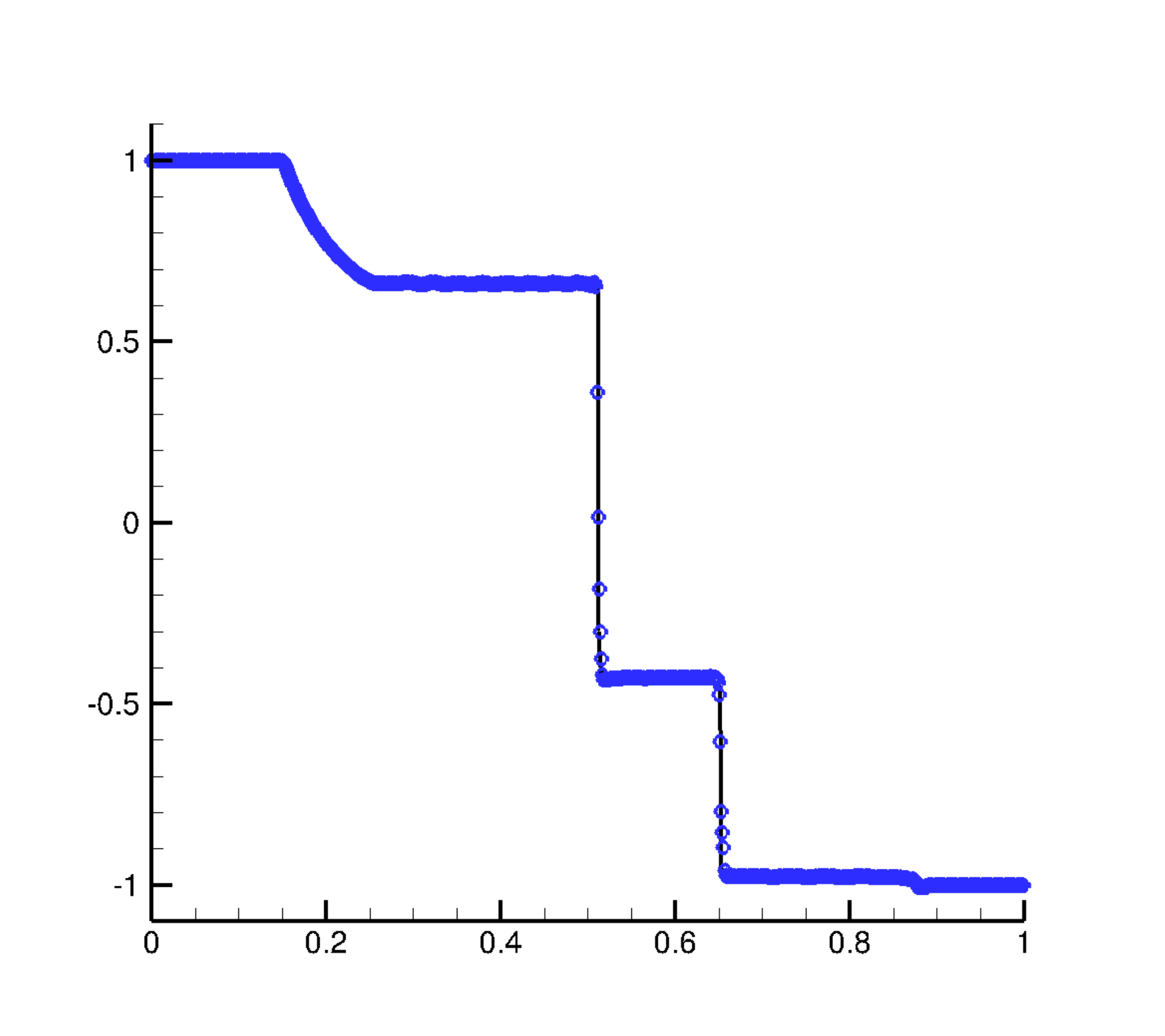}
  \end{subfigure}
  \caption{Example \ref{ex:RP1}:
  The rest-mass density $\rho$, the Lorentz factor $W$, and the $y$ component
of the magnetic field $\By$  at $t=0.4$ with $N_x=800$ cells (from left to right). The symbol ``$\circ$''
and the solid line are the numerical and reference solutions, respectively.}
  \label{fig:RP1}
\end{figure}

\begin{example}[Riemann problem {\uppercase\expandafter{\romannumeral2}}]\label{ex:RP2}\rm
  The initial data of the second 1D Riemann problem \cite{Zhao2017} are
  \begin{equation*}
    (\rho,\vx,\vy,\vz,\Bx,\By,\Bz,p)=\begin{cases}
      (1,~0,~0,~0,~5,~6,~6,~30),        &\quad x<0.5, \\
      (1,~0,~0,~0,~5,~0.7,~0.7,~1), &\quad x>0.5.
    \end{cases}
  \end{equation*}
Figure \ref{fig:RP2}   plots the rest-mass density $\rho$, the $x$ component of the velocity $\vx$,
and the $y$ component of the magnetic field $\By$ at $t=0.4$ obtained by the
entropy stable DG scheme with $800$ cells and the TVB limiter {($M=10$)}.
It can be seen that the solution consists of two left-moving rarefaction waves, a contact
discontinuity, and two right-moving shock waves, and our scheme can still resolve the waves well,
although small overshoot appears near the contact discontinuity.
\end{example}
\begin{figure}[ht!]
  \begin{subfigure}[b]{0.32\textwidth}
    \centering
    \includegraphics[width=1.0\textwidth, trim=40 0 50 30, clip]{./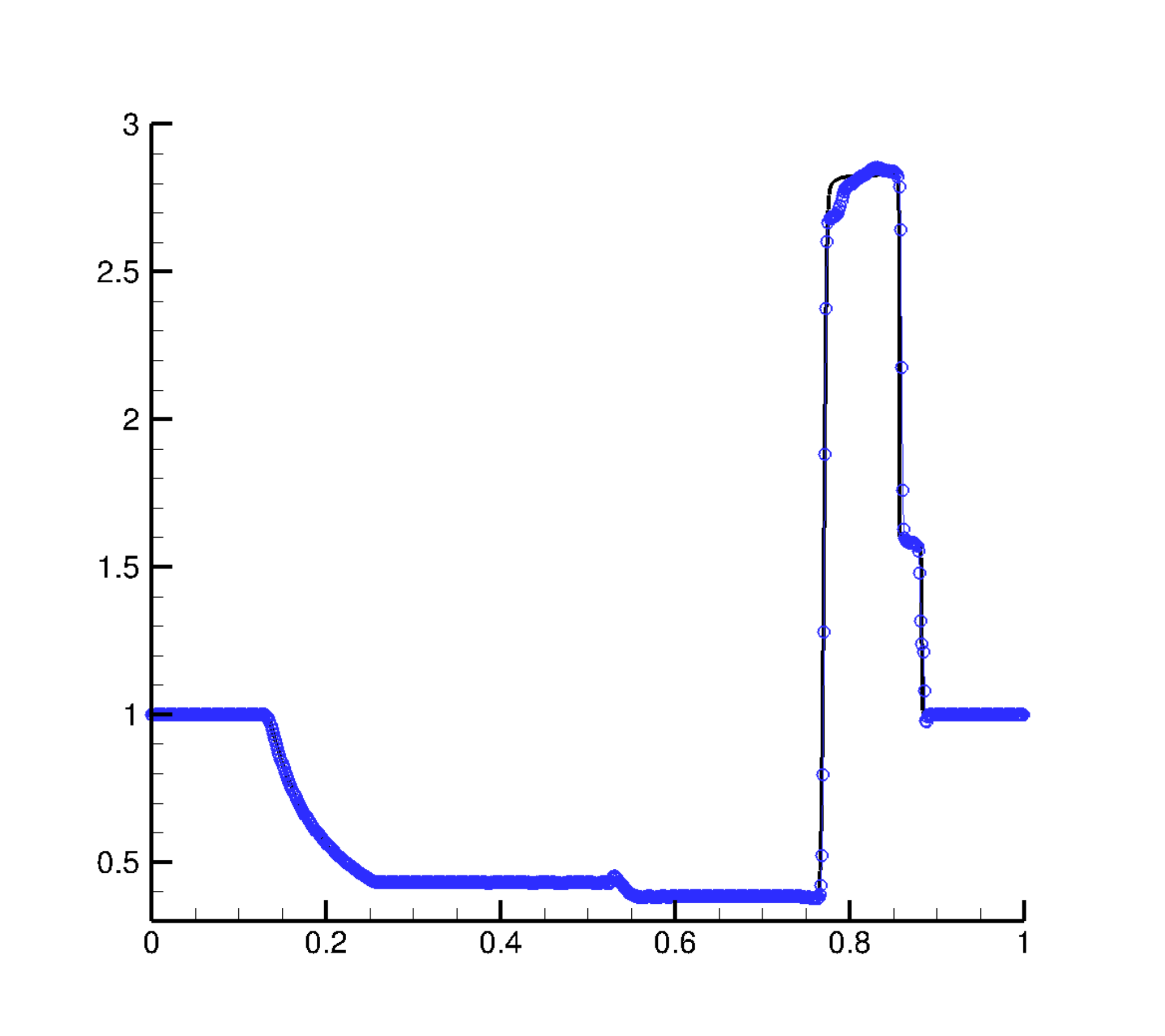}
  \end{subfigure}
  \begin{subfigure}[b]{0.32\textwidth}
    \centering
    \includegraphics[width=1.0\textwidth, trim=40 0 50 30, clip]{./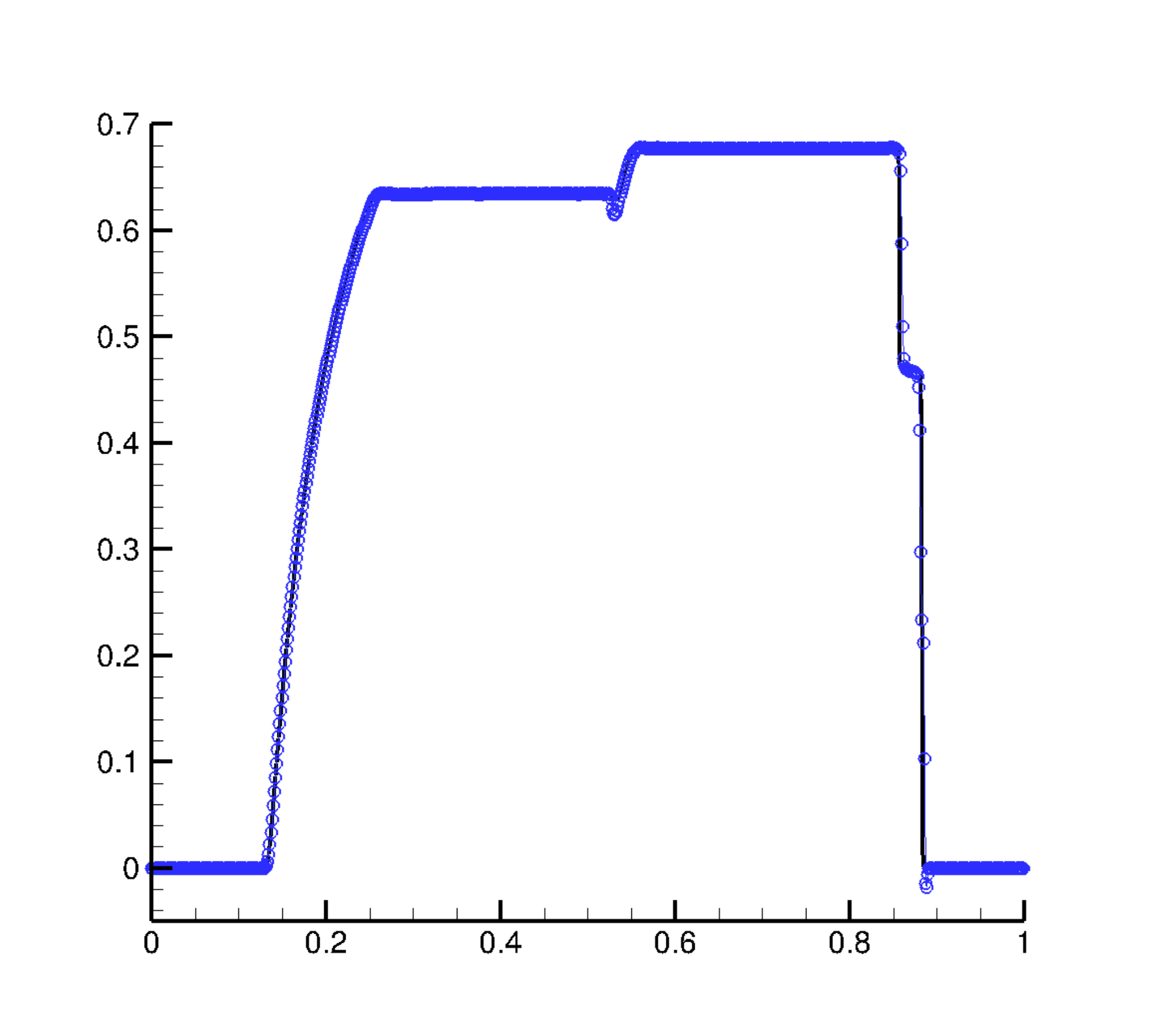}
  \end{subfigure}
  \begin{subfigure}[b]{0.32\textwidth}
    \centering
    \includegraphics[width=1.0\textwidth, trim=40 0 50 30, clip]{./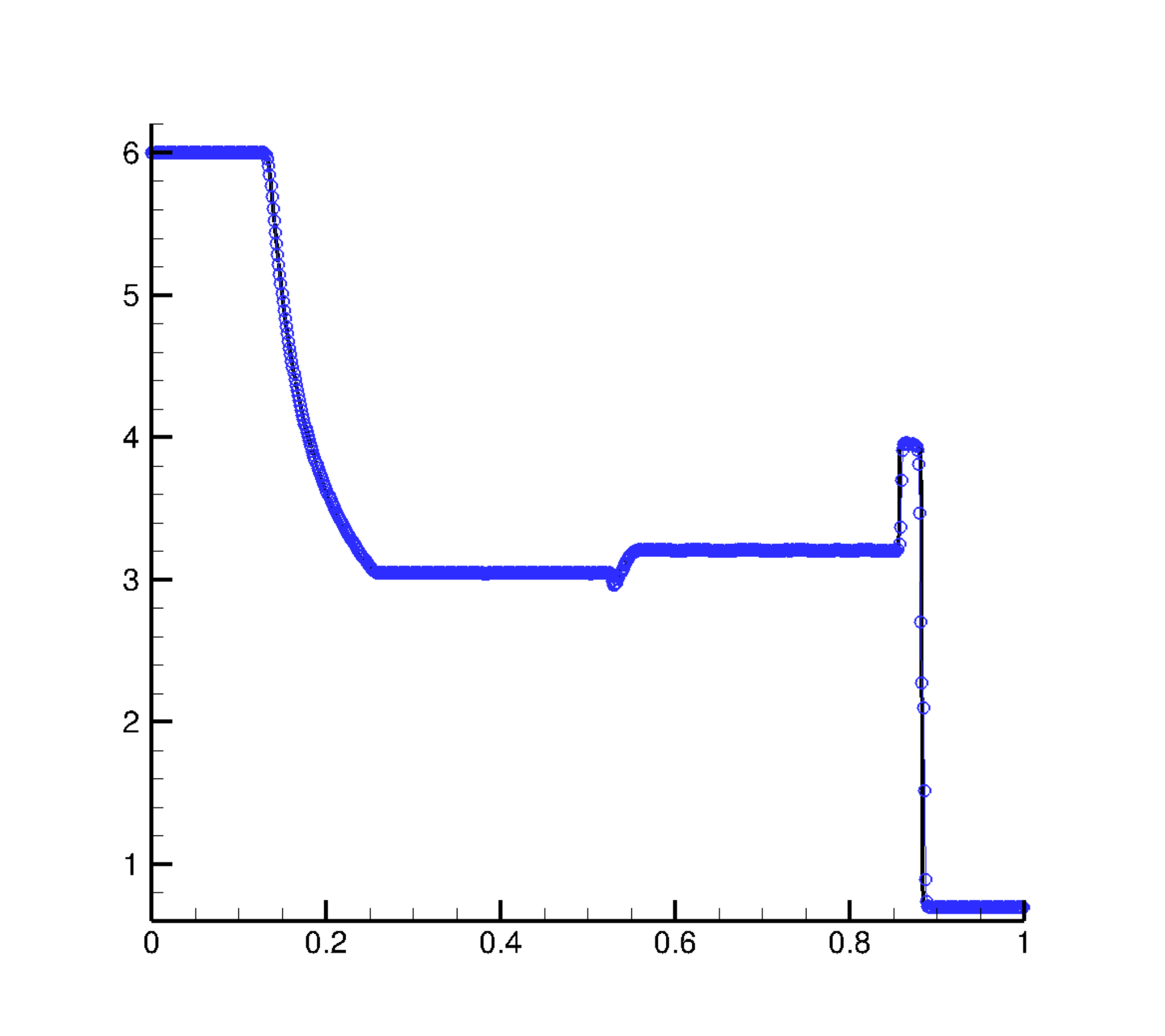}
  \end{subfigure}
  \caption{Example \ref{ex:RP2}:
  The rest-mass density $\rho$, the $x$ component of the velocity $\vx$,
and the $y$ component of the magnetic field $\By$ at $t=0.4$
 with $800$ cells (from left to right). The symbol ``$\circ$''
and the solid line are the numerical    and   reference solutions, respectively. }
  \label{fig:RP2}
\end{figure}

\begin{example}[Riemann problem {\uppercase\expandafter{\romannumeral3}}]\label{ex:RP3}\rm
  The initial data of the third 1D Riemann problem \cite{Mignone2009} are
  \begin{equation*}
    (\rho,\vx,\vy,\vz,\Bx,\By,\Bz,p)=\begin{cases}
      (1,~0,~0.3,~0.4,~1,~6,~2,~5), &\quad x<0.5, \\
      (0.9,~0,~0,~0,~1,~5,~2,~5.3), &\quad x>0.5.
    \end{cases}
  \end{equation*}
The initial discontinuity will break into seven waves:
a fast rarefaction wave, a rotational wave, and a slow shock wave moving to the
left of the contact discontinuity, and a slow shock wave, an Alfv\'en wave, and a
fast shock wave moving to the right of the contact discontinuity.
Figure \ref{fig:RP3} shows the rest-mass density $\rho$, the $x$ component of
the velocity $\vx$, and the $y$ component of the magnetic field $\By$ at $t=0.4$ obtained by the
entropy stable DG scheme with $800$ cells and the TVB limiter {($M=10$)}.
From those plots, we can see that our scheme can capture the discontinuities
well, as well as two narrow regions between
{the rotational wave  at $x\approx 0.455$ and the slow
shock wave at $x\approx 0.468$,
and the slow shock wave at $x\approx 0.558$ and the Alfv\'en wave at
$x\approx 0.565$}.
\end{example}
\begin{figure}[ht!]
  \begin{subfigure}[b]{0.32\textwidth}
    \centering
    \includegraphics[width=1.0\textwidth, trim=40 0 50 30, clip]{./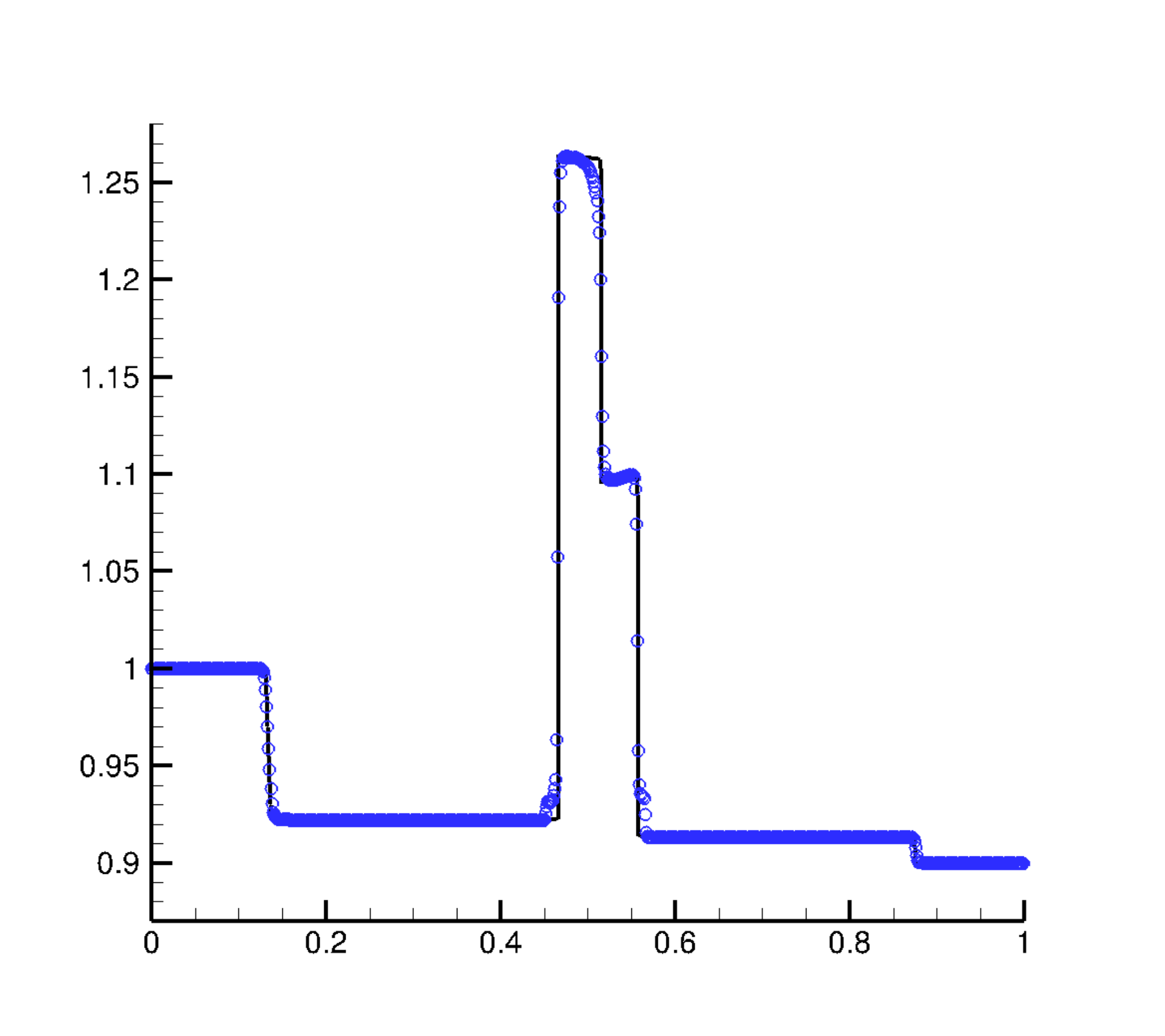}
  \end{subfigure}
  \begin{subfigure}[b]{0.32\textwidth}
    \centering
    \includegraphics[width=1.0\textwidth, trim=40 0 50 30, clip]{./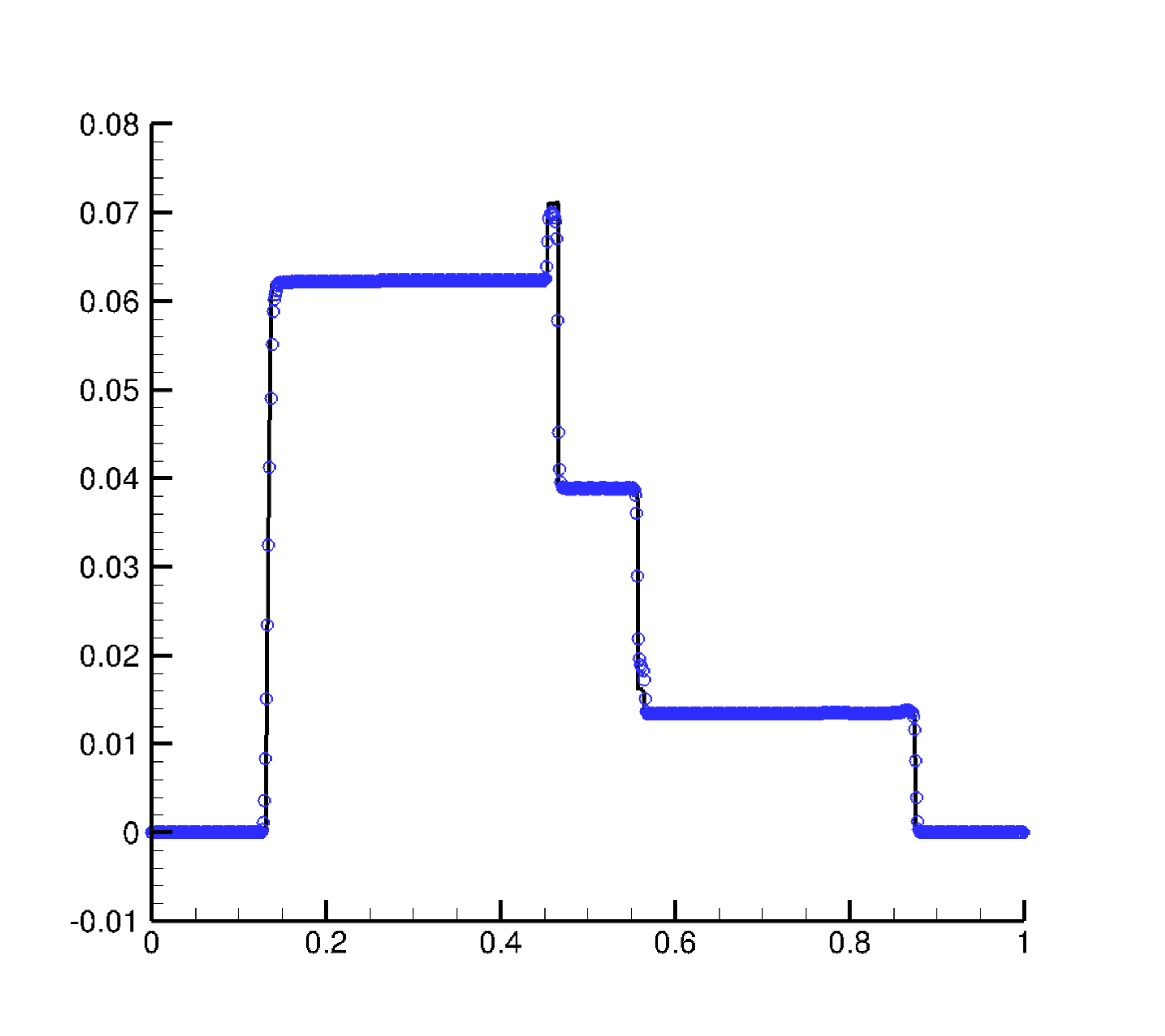}
  \end{subfigure}
  \begin{subfigure}[b]{0.32\textwidth}
    \centering
    \includegraphics[width=1.0\textwidth, trim=40 0 50 30, clip]{./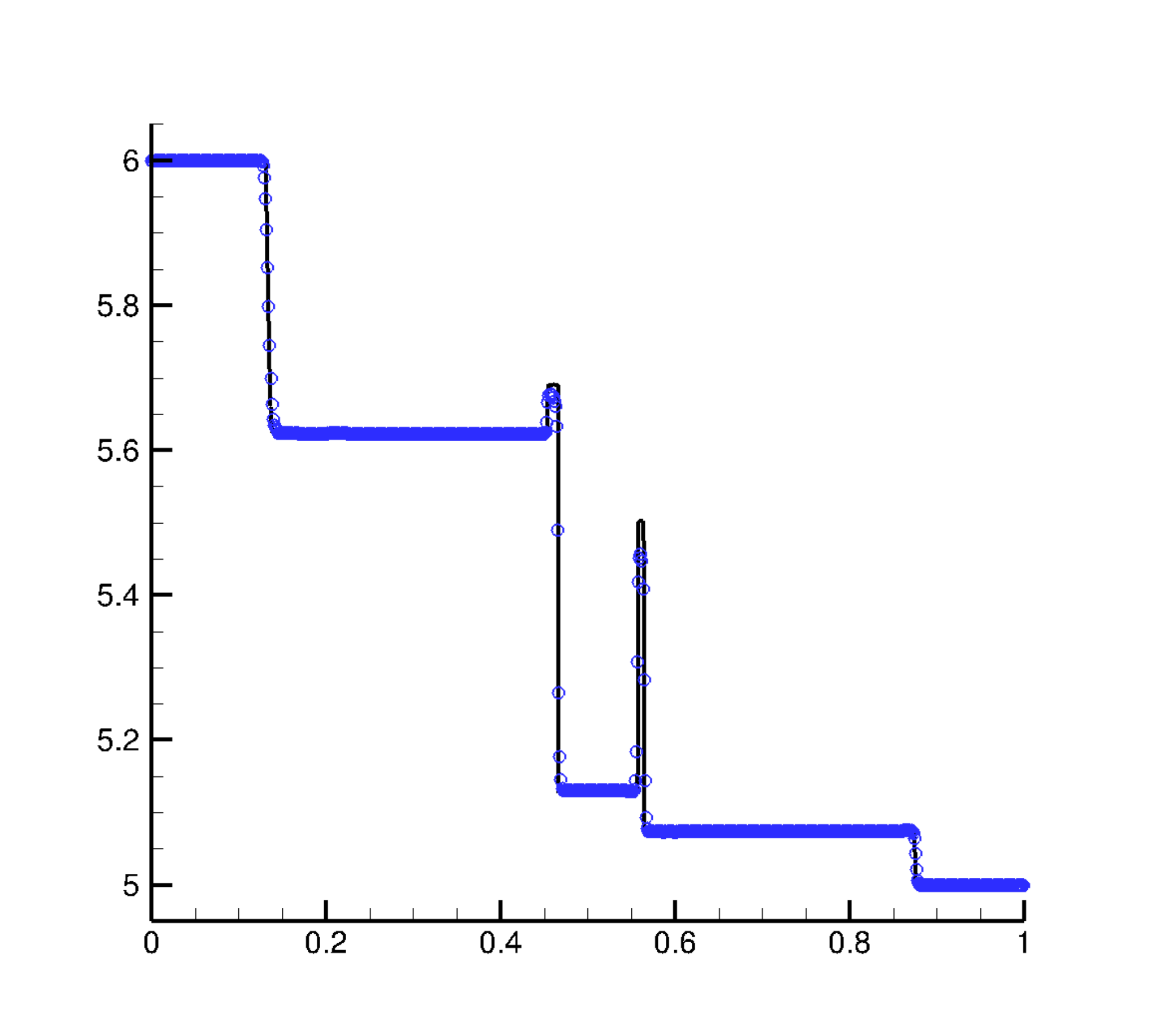}
  \end{subfigure}
  \caption{Example \ref{ex:RP3}: The rest-mass density $\rho$, the $x$ component of
the velocity $\vx$, and the $y$ component of the magnetic field $\By$ at $t=0.4$ with  $800$ cells {(from left to right). The symbol ``$\circ$''
and the solid line are the numerical and reference solutions, respectively.}}
  \label{fig:RP3}
\end{figure}

\subsection{2D case}
\begin{example}[2D Alfv\'en wave]\label{ex:acc2D}\rm
  This test is used to verify the accuracy of the 2D entropy stable scheme.
  The computational domain   $[0,2/\sqrt{3}]\times[0,2]$ with periodic boundary conditions
  is divided into $N_x\times N_y$ uniform cells.
  A sine wave is propagating in the direction $(\sqrt{3}/2,1/2)$, i.e., the angle
  with the $x$-axis is $\alpha=\pi/6$.
  The exact solutions \cite{Zhao2017} are given by
  \begin{align*}
    &\rho(x,y,t)=1, \quad \vx(x,y,t)=-0.1\sin (2\pi(\xi+t/\kappa))\sin\alpha, \\
    &\vy(x,y,t)=0.1\sin (2\pi(\xi+t/\kappa))\cos\alpha,\quad \vz(x,y,t)=0.1\cos(2\pi(\xi+t/\kappa)), \\
    &\Bx(x,y,t)=\cos\alpha+\kappa\vx(x,y,t), \quad \By(x,y,t)=\sin\alpha+\kappa\vy(x,y,t), \\
    &\Bz(x,y,t)=\kappa\vz(x,y,t), \quad p(x,y,t)=0.1,
  \end{align*}
  where $\xi=x\cos\alpha+y\sin\alpha,\kappa=\sqrt{1+\rho h W^{2}}$.

Table \ref{tab:acc2D} lists the errors and the orders of convergence in $\By$ at $t=1$
obtained by using our entropy stable DG scheme.
It is seen that the orders of convergence are half order less than the optimal
order. It is similar to the phenomenon observed in \cite{Chen2017}. The possible reason is the
inaccurate numerical quadrature rule used in the element.
\end{example}
\begin{table}[!ht]
  \centering
  \begin{tabular}{r|cc|cc|cc} \hline
 $N_x=N_y$& $\ell^1$ error & order & $\ell^2$ error & order & $\ell^\infty$ error & order \\
 \hline
 10 & 4.180e-04 &  -   & 4.994e-04 &  -   & 1.608e-03 &  -   \\
 20 & 5.550e-05 & 2.91 & 6.741e-05 & 2.89 & 2.313e-04 & 2.80 \\
 40 & 7.054e-06 & 2.98 & 8.820e-06 & 2.93 & 2.896e-05 & 3.00 \\
 80 & 9.656e-07 & 2.87 & 1.192e-06 & 2.89 & 4.674e-06 & 2.63 \\
160 & 1.526e-07 & 2.66 & 1.964e-07 & 2.60 & 7.962e-07 & 2.55 \\
 \hline
  \end{tabular}
  \caption{Example \ref{ex:acc2D}: Errors and orders of convergence in $\By$ at $t=1$.}
  \label{tab:acc2D}
\end{table}

\begin{example}[Isentropic vortex]\label{ex:vortex}\rm
The   2D relativistic isentropic vortex problem constructed in
  \cite{Balsara2016} is used to further test the accuracy and performance of our 2D scheme.
  The computational domain   $[-5,5]^2$ with periodic boundary conditions is divided into $N_x\times N_y$ uniform cells.
  The setup of a steady vortex centered at $(0,0)$ in a coordinate system $S$
  with the space-time coordinates $(t,x,y)$ is
  \begin{align*}
    &(\vx, \vy)=v_{\text{max}}^{\phi} e^{0.5(1-r^2)}(-y, x),\\
    &(\Bx, \By)=B_{\text{max}}^{\phi} e^{0.5(1-r^2)}(-y, x),
  \end{align*}
  with $v_{\text{max}}^\phi=B_{\text{max}}^\phi=0.7$ and $r=\sqrt{x^2+y^2}$.
  The vortex is isentropic so that $p=\rho^\Gamma$, and then the pressure can be solved by
  \begin{align*}
    r\dfrac{\dd \pt}{\dd r}=\left(\rho h+\bb\right) W^{2} |v|^2-\bb, \quad
    \pt(0)=1.
  \end{align*}
  Next, assume that a coordinate system $S'$ with the spacetime coordinates $(t',x',y')$
  is in motion relative to the coordinate system $S$ with a constant velocity of magnitude
  $w$ along the $(1,1)$ direction, from the perspective of an observer stationary
  in $S$. The relationship between the  coordinate systems $S$ and $S'$ is given by the Lorentz
  transformation  as follows
  \begin{align*}
    &\gamma = \dfrac{1}{\sqrt{1-w^2}},\quad t=\gamma\big(t'+\dfrac{w}{\sqrt{2}}(x'+y')\big),\\
    &x=x'+\dfrac{\gamma-1}{2}(x'+y')+\dfrac{\gamma t'w}{\sqrt{2}},
    \quad y=y'+\dfrac{\gamma-1}{2}(x'+y')+\dfrac{\gamma t'w}{\sqrt{2}}.
  \end{align*}
  Corresponding transformations between the velocities and the magnetic field are
  given by
  \begin{align*}
    \vx'&=\dfrac{1}{1-\frac{w(\vx+\vy)}{\sqrt{2}}}\left[\dfrac{\vx}{\gamma}
    -\dfrac{w}{\sqrt{2}}+\dfrac{\gamma w^2}{2(\gamma+1)}(\vx+\vy)\right],\\
    \vy'&=\dfrac{1}{1-\frac{w(\vx+\vy)}{\sqrt{2}}}\left[\dfrac{\vy}{\gamma}
    -\dfrac{w}{\sqrt{2}}+\dfrac{\gamma w^2}{2(\gamma+1)}(\vx+\vy)\right],
  \end{align*}
  and
  \begin{align*}
    \Bx'&=\Bx + \frac{\gamma-1}{2}(\Bx-\By),\\
    \By'&=\By - \frac{\gamma-1}{2}(\Bx-\By),
  \end{align*}
  respectively.
  Using those transformations  gives a time-dependent solution
  $(\rho',\vx',\vy',\Bx',\By',p')$ in the coordinate system $S'$.
  This test describes a RMHD vortex moves with a constant speed of magnitude
  $w$ in $(-1,-1)$ direction.

We choose $w=0.5\sqrt2$, and the output time is $t=20$ so that the vortex
travels and returns to the original position after a period.
The errors in the mass density $D$ and
corresponding orders of convergence listed in Table \ref{tab:vortex}   show
 that the orders of convergence
of the present entropy stable DG scheme are nearly $2.5$ as the mesh is refined.
Figure \ref{fig:vortex} plots the contours of the rest-mass density $\rho$ and the
magnitude of the magnetic field $|\bm{B}|$ with $40$ equally spaced contour lines.
The results show that due to the Lorentz contraction, the vortex becomes
elliptical, and our scheme can preserve the shape of the vortex well after a whole period.
Figure \ref{fig:decay} presents the evolutions of the total entropy
$\int_\Omega\eta(\bU_{i,j})\dd x\dd y$
with respect to the time obtained by the entropy stable DG scheme with
different resolutions. We can see that the total entropy decay as expected  and
 they converge as the resolution increases.
\end{example}
\begin{table}[!ht]
  \centering
  \begin{tabular}{r|cc|cc|cc} \hline
 $N_x=N_y$& $\ell^1$ error & order & $\ell^2$ error & order & $\ell^\infty$ error & order \\
 \hline
 20 & 4.239e-02 &  -   & 1.142e-01 &  -   & 1.012e+00 &  -   \\
 40 & 6.172e-03 & 2.78 & 1.878e-02 & 2.60 & 2.151e-01 & 2.23 \\
 80 & 5.333e-04 & 3.53 & 1.971e-03 & 3.25 & 3.297e-02 & 2.71 \\
160 & 6.180e-05 & 3.11 & 2.507e-04 & 2.97 & 5.476e-03 & 2.59 \\
320 & 9.280e-06 & 2.74 & 4.221e-05 & 2.57 & 1.008e-03 & 2.44 \\
 \hline
  \end{tabular}
  \caption{Example \ref{ex:vortex}: Errors and orders of convergence in $D$ at $t=20$.}
  \label{tab:vortex}
\end{table}

\begin{figure}[ht!]
  \begin{subfigure}[b]{0.5\textwidth}
    \centering
    \includegraphics[width=1.0\textwidth, trim=10 40 70 20, clip]{./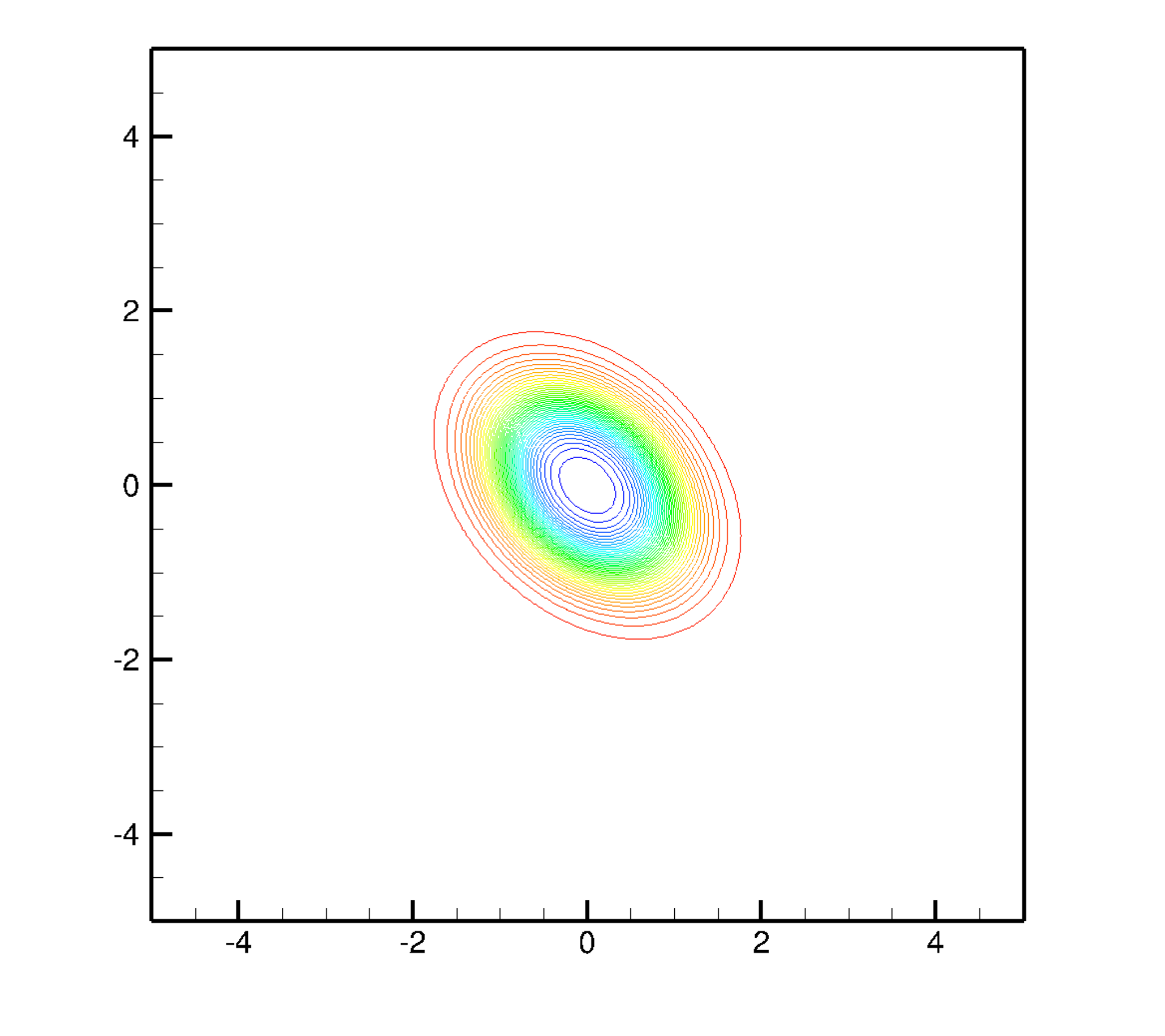}
  \end{subfigure}
  \begin{subfigure}[b]{0.5\textwidth}
    \centering
    \includegraphics[width=1.0\textwidth, trim=10 40 70 20, clip]{./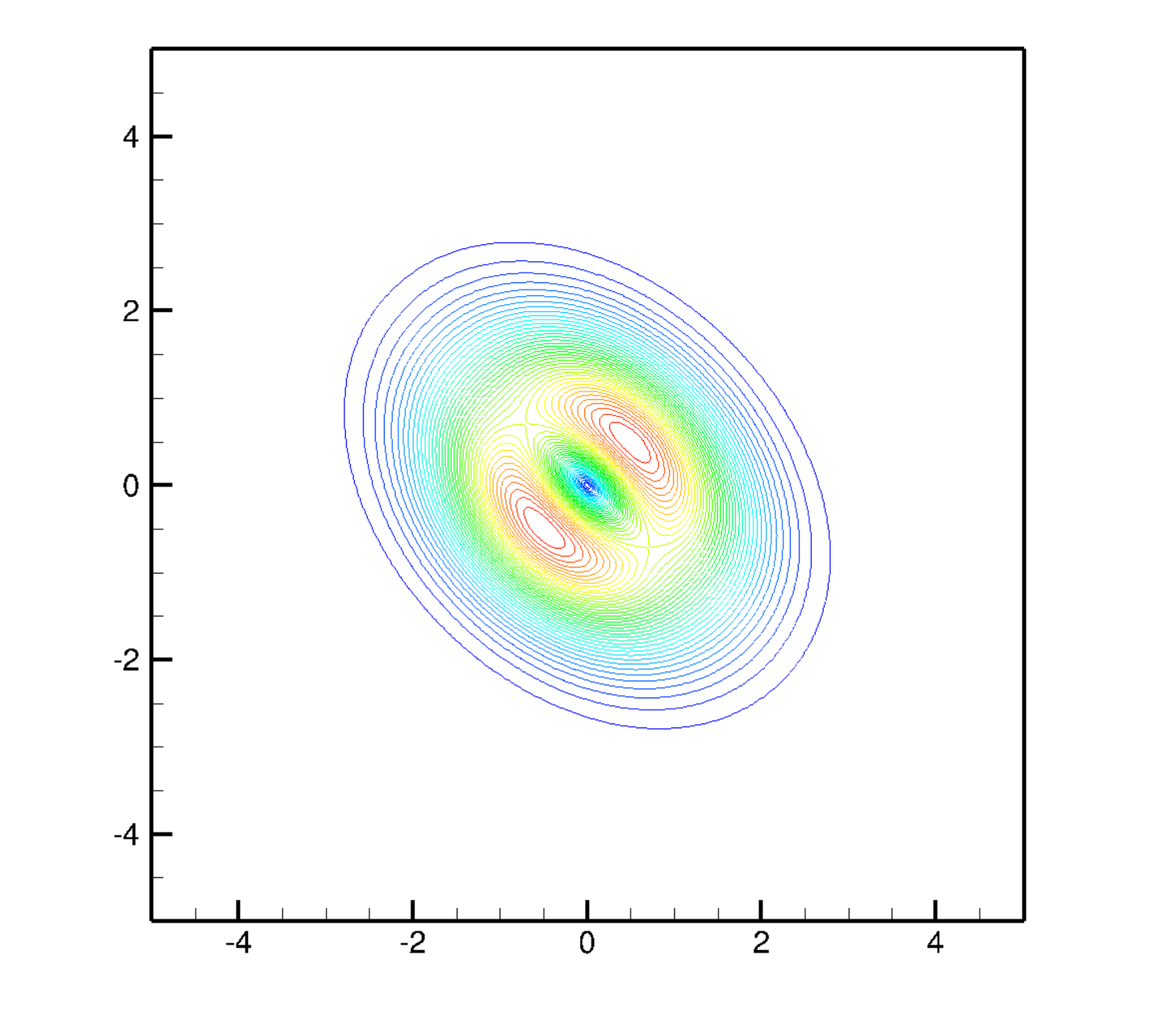}
  \end{subfigure}
  \caption{Example \ref{ex:vortex}: The rest-mass density $\rho$ and the magnitude of the magnetic field $|B|$ (from left to right) at $t=20$ with $40$ equally spaced contour lines   obtained
    by using the entropy stable scheme with $N_x=N_y=320$.}
  \label{fig:vortex}
\end{figure}

\begin{figure}[!ht]
  \centering
  \includegraphics[width=0.50\textwidth, trim=20 40 50 50, clip]{./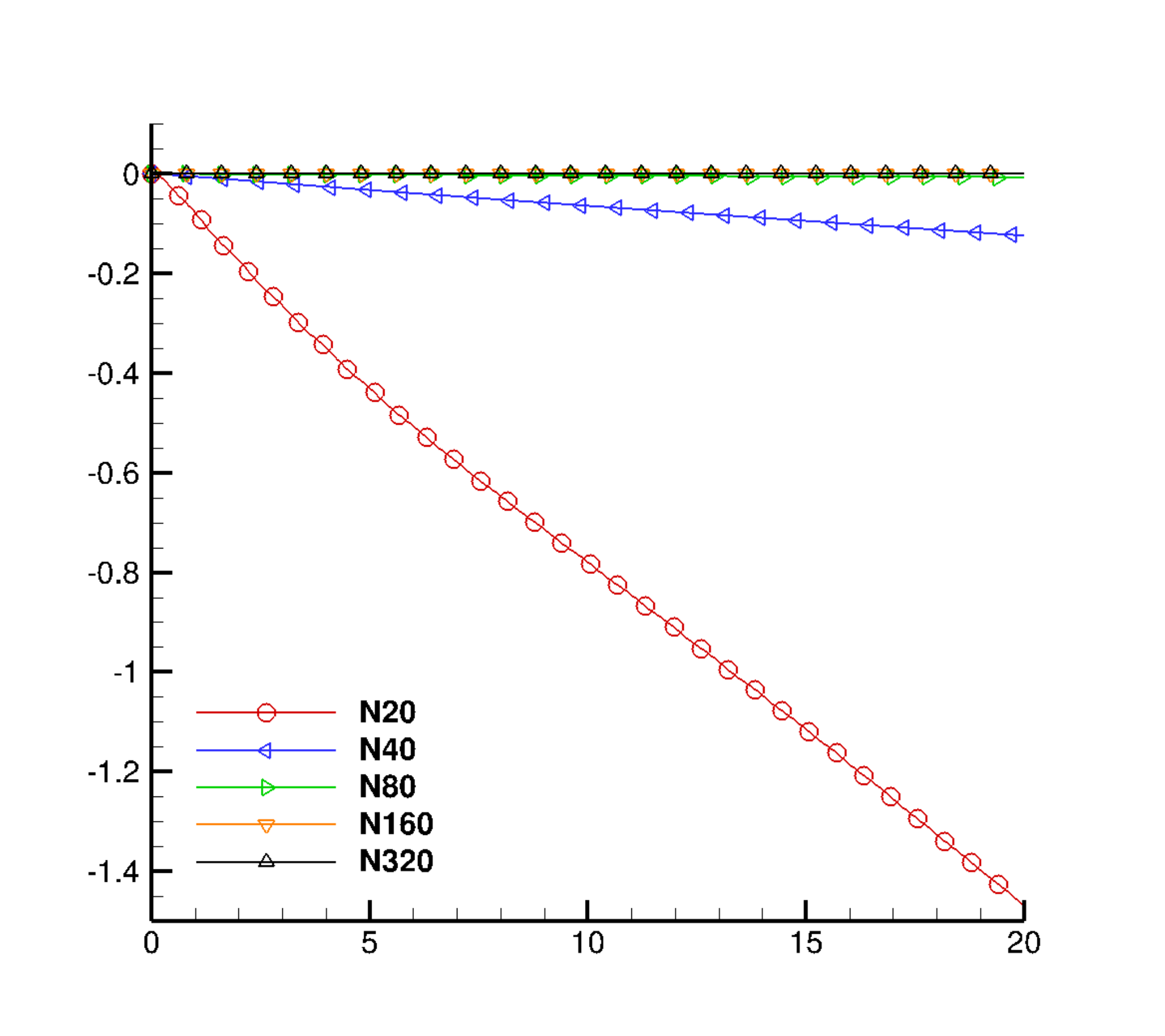}
  \caption{Example \ref{ex:vortex}: The time evolution of the total entropy obtained
    by using the entropy stable DG scheme  with different spatial resolutions of $N_x=N_y=20,40,80,160,320$.}
  \label{fig:decay}
\end{figure}

\begin{example}[Orszag-Tang problem]\label{ex:OrszagTang}\rm
  It is a benchmark test for the RMHD equations \cite{Zhao2017}. The
  initial data   are taken as follows
  \begin{align*}
    & {\rho(x, y, 0)=\frac{25}{36 \pi},\quad \vx(x, y, 0)=0.5 \sin (2 \pi y), \quad \vy(x, y, 0)=0.5 \sin (2 \pi x)}, \\
    & {\vz(x, y, 0)=0, \quad \Bx(x, y, 0)=-\frac{1}{\sqrt{4 \pi}} \sin (2 \pi y), \quad \By(x, y, 0)=\frac{1}{\sqrt{4 \pi}} \sin (4 \pi x)}, \\
    & {\Bz(x, y, 0)=0, \quad p(x, y, 0)=\frac{5}{12 \pi}}.
  \end{align*}
  The computational domain is $[0,1]^2$ with periodic boundary conditions.
 As time increases, complex wave patterns will emerge and the solution will present turbulent behavior.

 In order to get a better performance of the nodal DG scheme, for this test and the following tests, we will first employ the KXRCF discontinuity
indicator \cite{Krivodonova2004} to detect the ``trouble cells'',
and then use the TVB limiter to modify the nodal values in   the ``trouble
cells''. 
Moreover, the physical-constraints-preserving limiter \cite{wu2017b} is
also used to guarantee that the numerical solutions are in the physical admissible state
set. Figure \ref{fig:OrszagTang} shows the contours of the rest-mass density $\rho$
and the Lorentz factor $W$ at $t=1$ with $40$ equally spaced contour lines with the TVB limiter
parameter $M=10$.
It can be seen   that our scheme can resolve the wave patterns
well and the results are comparable to those in \cite{Zhao2017}.
\end{example}
\begin{figure}[ht!]
  \begin{subfigure}[b]{0.5\textwidth}
    \centering
    \includegraphics[width=1.0\textwidth, trim=40 40 70 50, clip]{./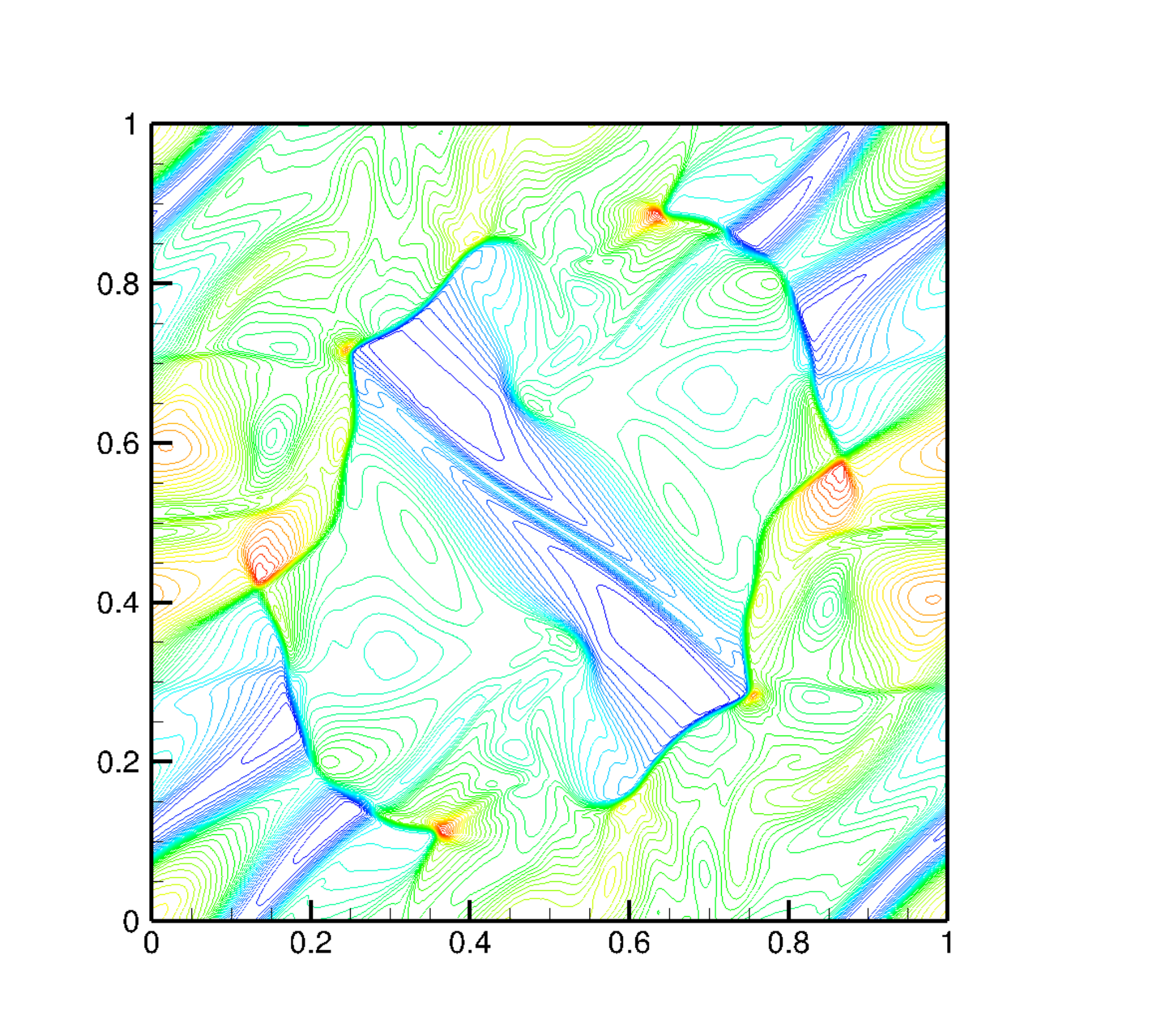}
  \end{subfigure}
  \begin{subfigure}[b]{0.5\textwidth}
    \centering
    \includegraphics[width=1.0\textwidth, trim=40 40 70 50, clip]{./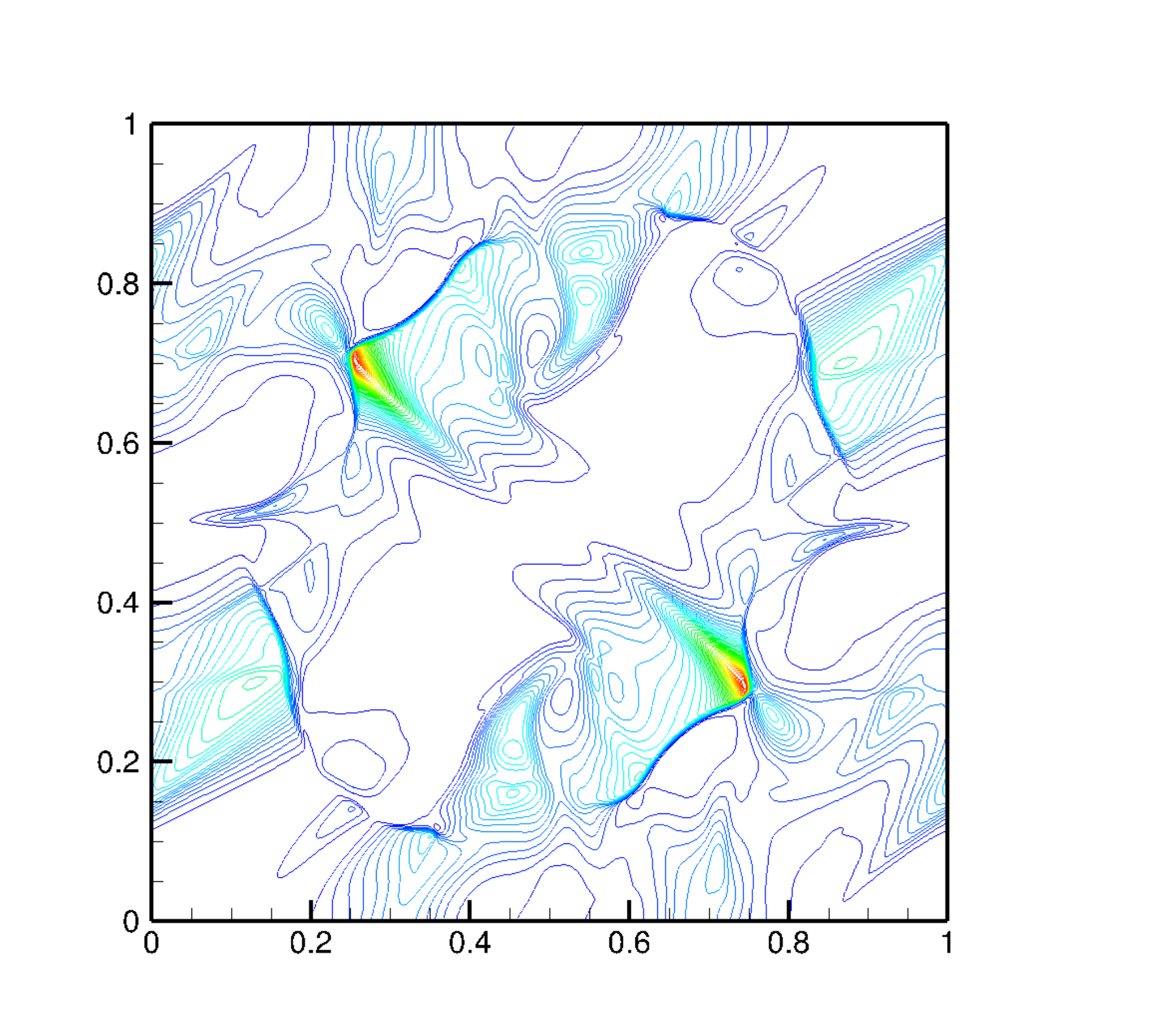}
  \end{subfigure}
  \caption{Example \ref{ex:OrszagTang}: The rest-mass density $\rho$
and the Lorentz factor $W$ (from left to right) at $t=1$ with $40$ equally spaced contour lines  obtained
    by using the entropy stable scheme with $N_x=N_y=400$.}
  \label{fig:OrszagTang}
\end{figure}

\begin{example}[Blast problem]\label{ex:2DBlast}\rm
 It describes a 2D RMHD blast problem.
 The initial setup is the same as that in \cite{Balsara2016,Del2003,Mignone2006}.
  The computation domain  $[-6,6]^2$ with outflow boundary conditions
  consists of three parts. The inner part is the explosion zone with
  a radius of $0.8$, and $\rho=0.01,~p=1$. The outer part is the ambient medium with the
  radius larger than $1$, and $\rho=10^{-4},~p=5\times 10^{-4}$. The intermediate
  part is a linear taper applied to the density and the pressure from the radius
  $0.8$ to $1$. The magnetic field is only initialized in the $x$-direction as
  $\Bx=0.1$ or 0.5. The adiabatic index $\Gamma=4/3$ is used in this test.

Figures \ref{fig:2DBlast} and \ref{fig:2DBlast_Bx05} plot the contours of the rest-mass density logarithm, the pressure logarithm, the Lorentz factor, and the magnitude of the magnetic field at $t=4$ with $40$
equally spaced contour lines obtained by using the entropy stable scheme with
the TVB limiter parameter   $M=0.01$.
We can see that the solutions are well gotten and
those for the case of $\Bx=0.1$ and $0.5$
 are in agreement with those in
\cite{Balsara2016} and \cite{wu2017b}, respectively.

The time evolution of the total entropy in Example \ref{ex:OrszagTang} and Example
\ref{ex:2DBlast} is shown in Figure \ref{fig:2DTotalEntropy},
 and   the  observed  monotonic decay   implies that the fully discrete scheme is entropy stable.

\end{example}

\begin{figure}[ht!]
  \begin{subfigure}[b]{0.5\textwidth}
    \centering
    \includegraphics[width=1.0\textwidth, trim=50 40 70 50, clip]{./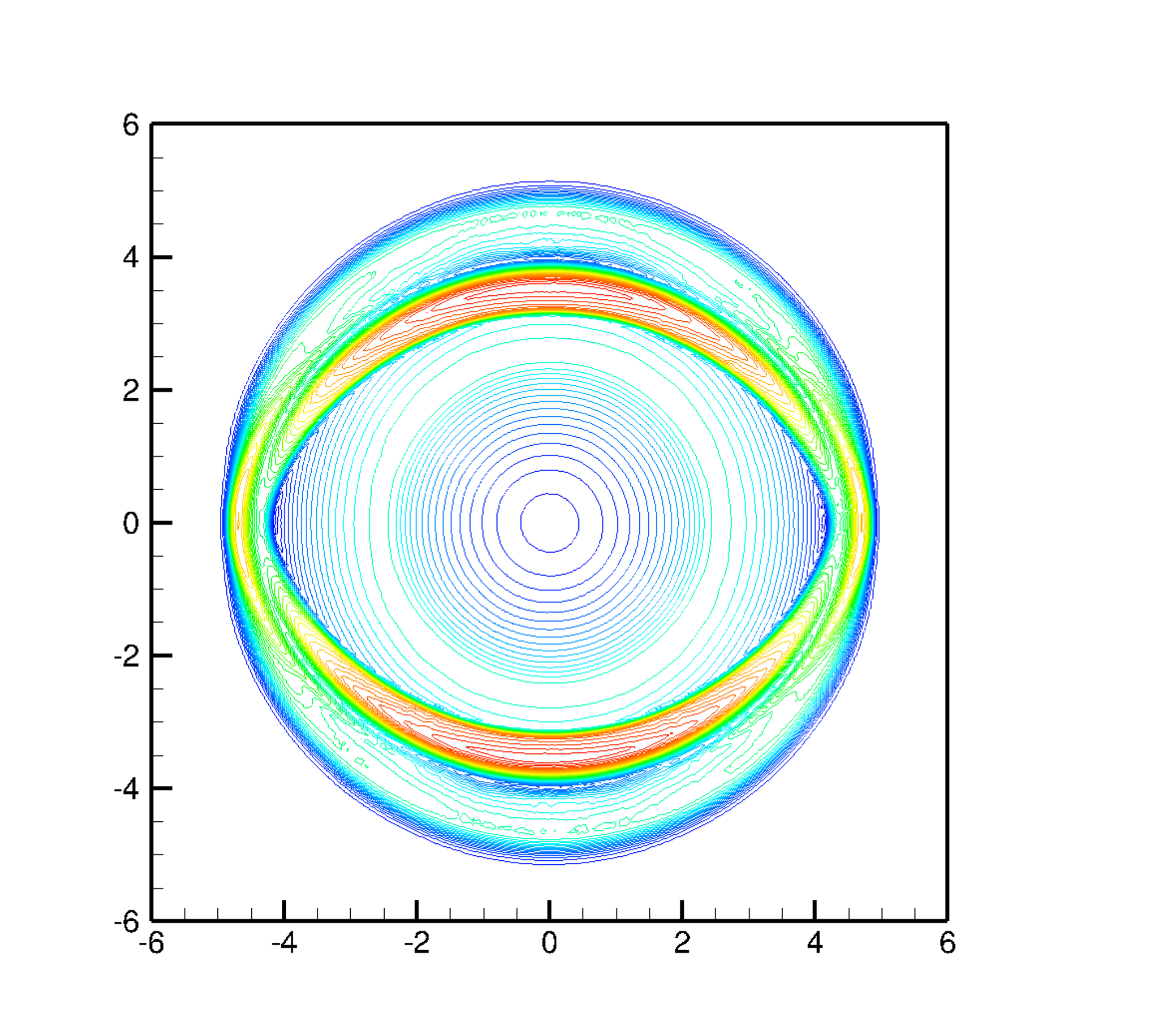}
    \caption{$\log_{10}\rho$}
  \end{subfigure}
  \begin{subfigure}[b]{0.5\textwidth}
    \centering
    \includegraphics[width=1.0\textwidth, trim=50 40 70 50, clip]{./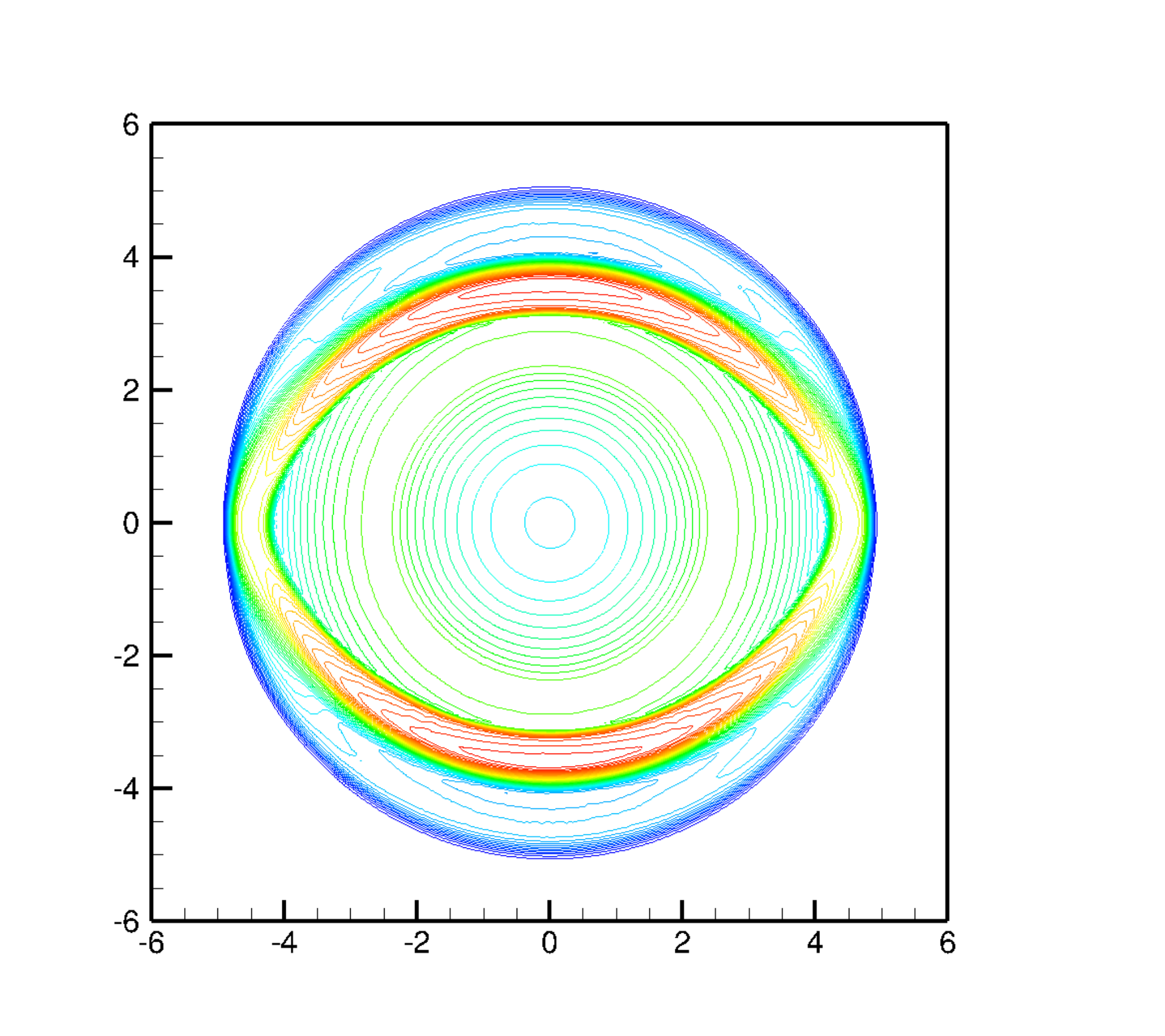}
    \caption{$\log_{10}p$}
  \end{subfigure}
  \begin{subfigure}[b]{0.5\textwidth}
    \centering
    \includegraphics[width=1.0\textwidth, trim=50 40 70 50, clip]{./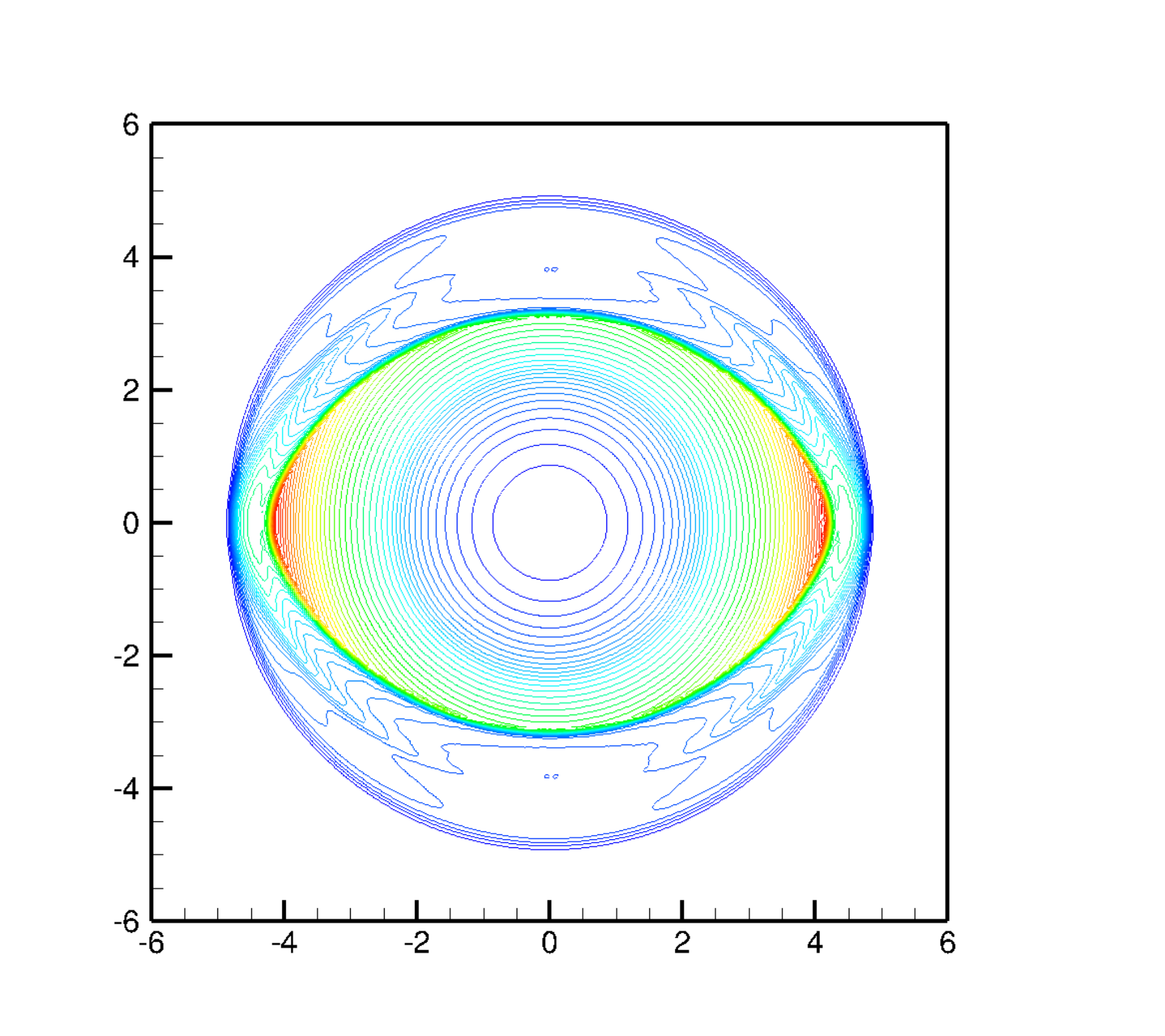}
    \caption{$W$}
  \end{subfigure}
  \begin{subfigure}[b]{0.5\textwidth}
    \centering
    \includegraphics[width=1.0\textwidth, trim=50 40 70 50, clip]{./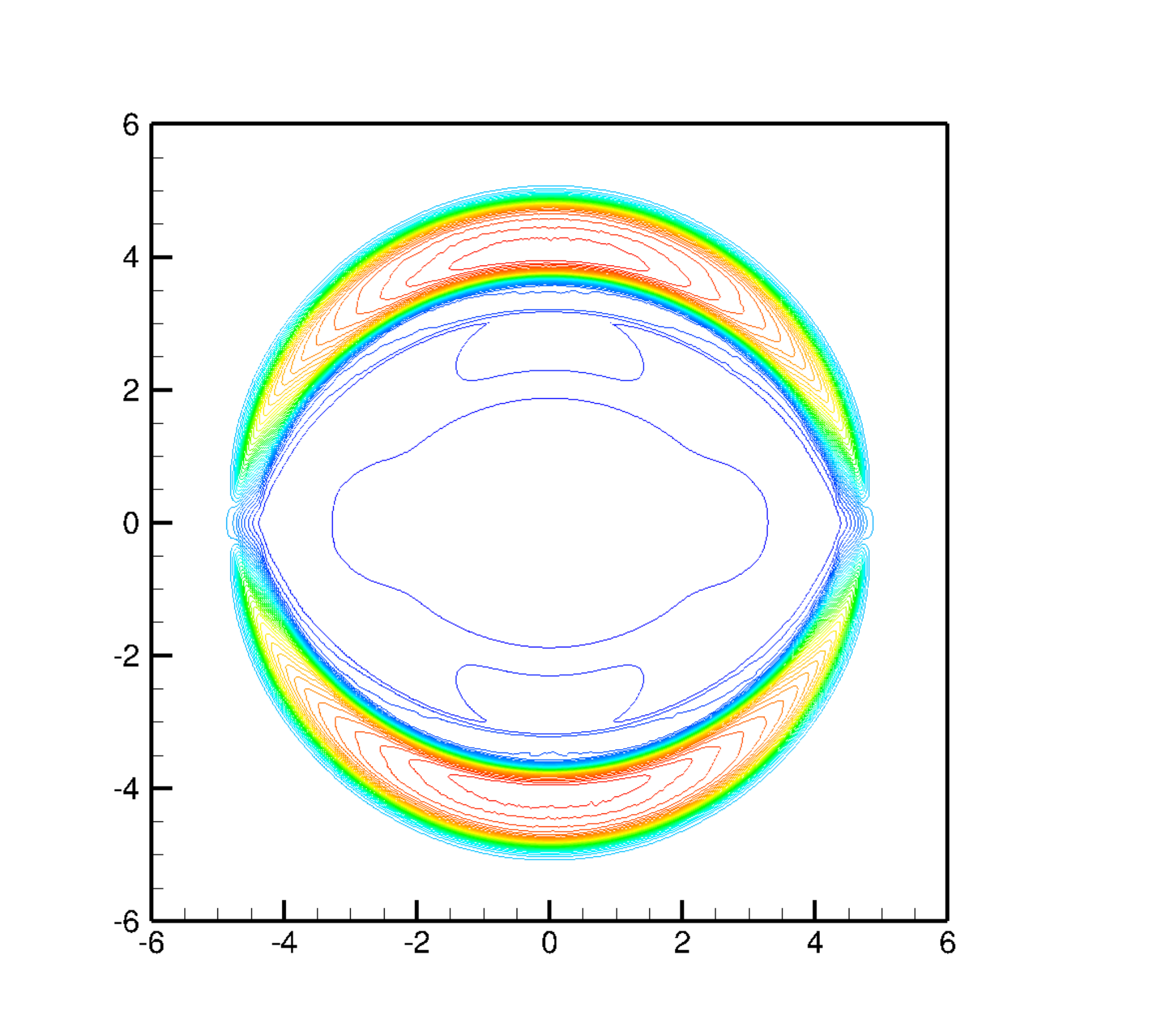}
    \caption{$|\bm{B}|$}
  \end{subfigure}
  \caption{Example \ref{ex:2DBlast} {with initial $\Bx=0.1$}: Numerical solutions at $t=4$
 with $40$ equally spaced contour lines obtained
 by using the entropy stable scheme with $N_x=N_y=400$.}
  \label{fig:2DBlast}
\end{figure}

\begin{figure}[ht!]
  \begin{subfigure}[b]{0.5\textwidth}
    \centering
    \includegraphics[width=1.0\textwidth, trim=50 40 70 50, clip]{./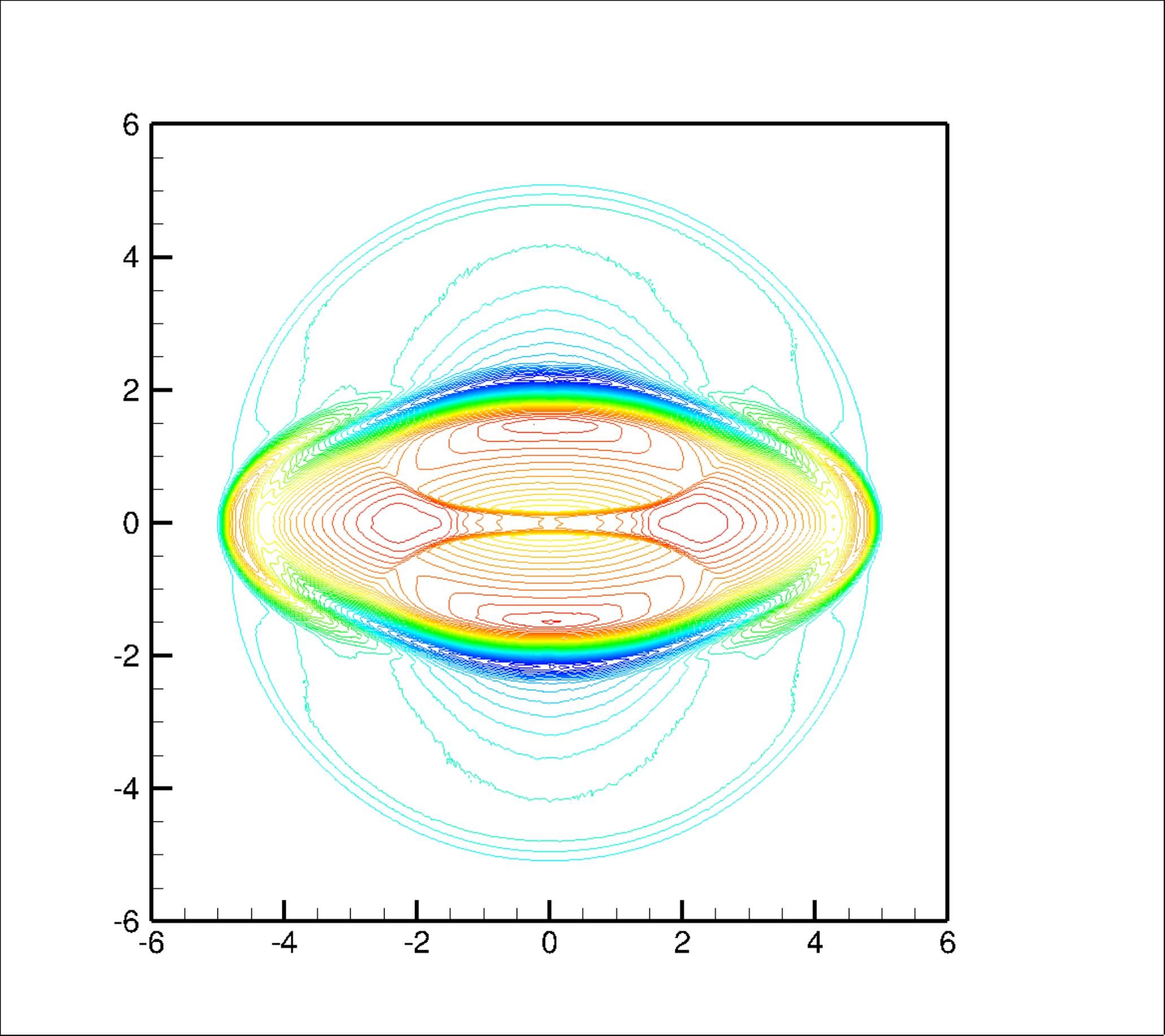}
    \caption{$\log_{10}\rho$}
  \end{subfigure}
  \begin{subfigure}[b]{0.5\textwidth}
    \centering
    \includegraphics[width=1.0\textwidth, trim=50 40 70 50, clip]{./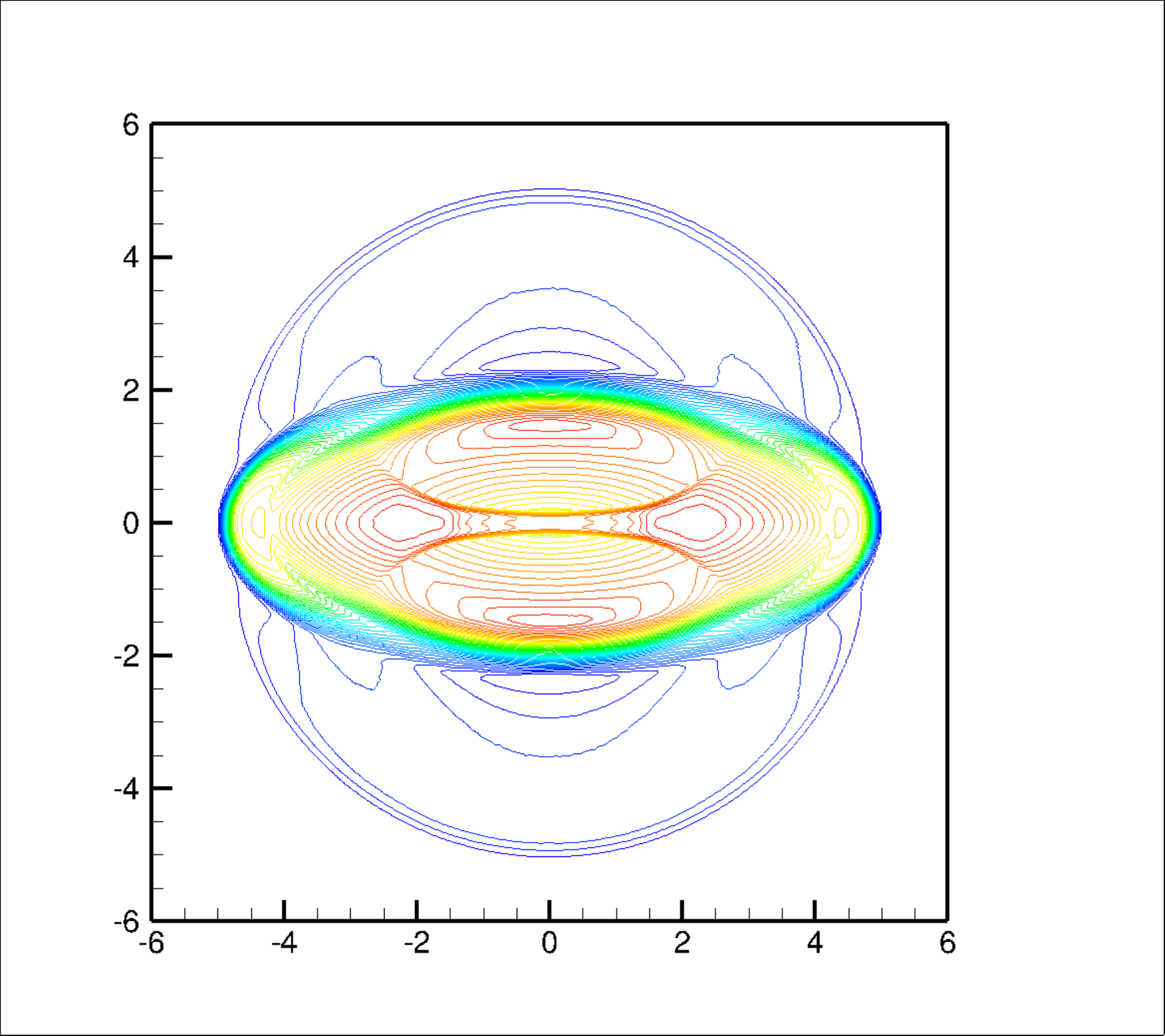}
    \caption{$\log_{10}p$}
  \end{subfigure}
  \begin{subfigure}[b]{0.5\textwidth}
    \centering
    \includegraphics[width=1.0\textwidth, trim=50 40 70 50, clip]{./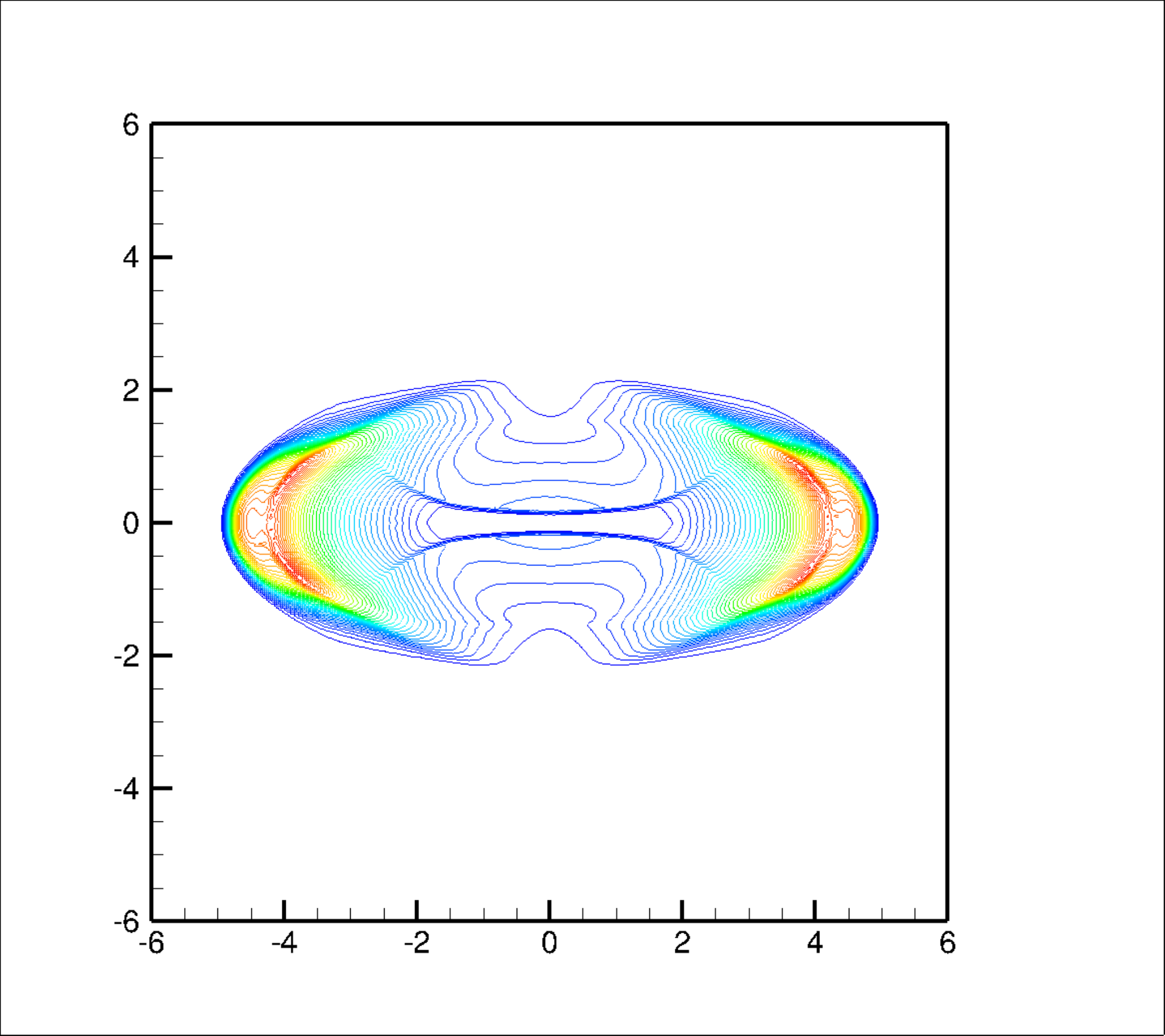}
    \caption{$W$}
  \end{subfigure}
  \begin{subfigure}[b]{0.5\textwidth}
    \centering
    \includegraphics[width=1.0\textwidth, trim=50 40 70 50, clip]{./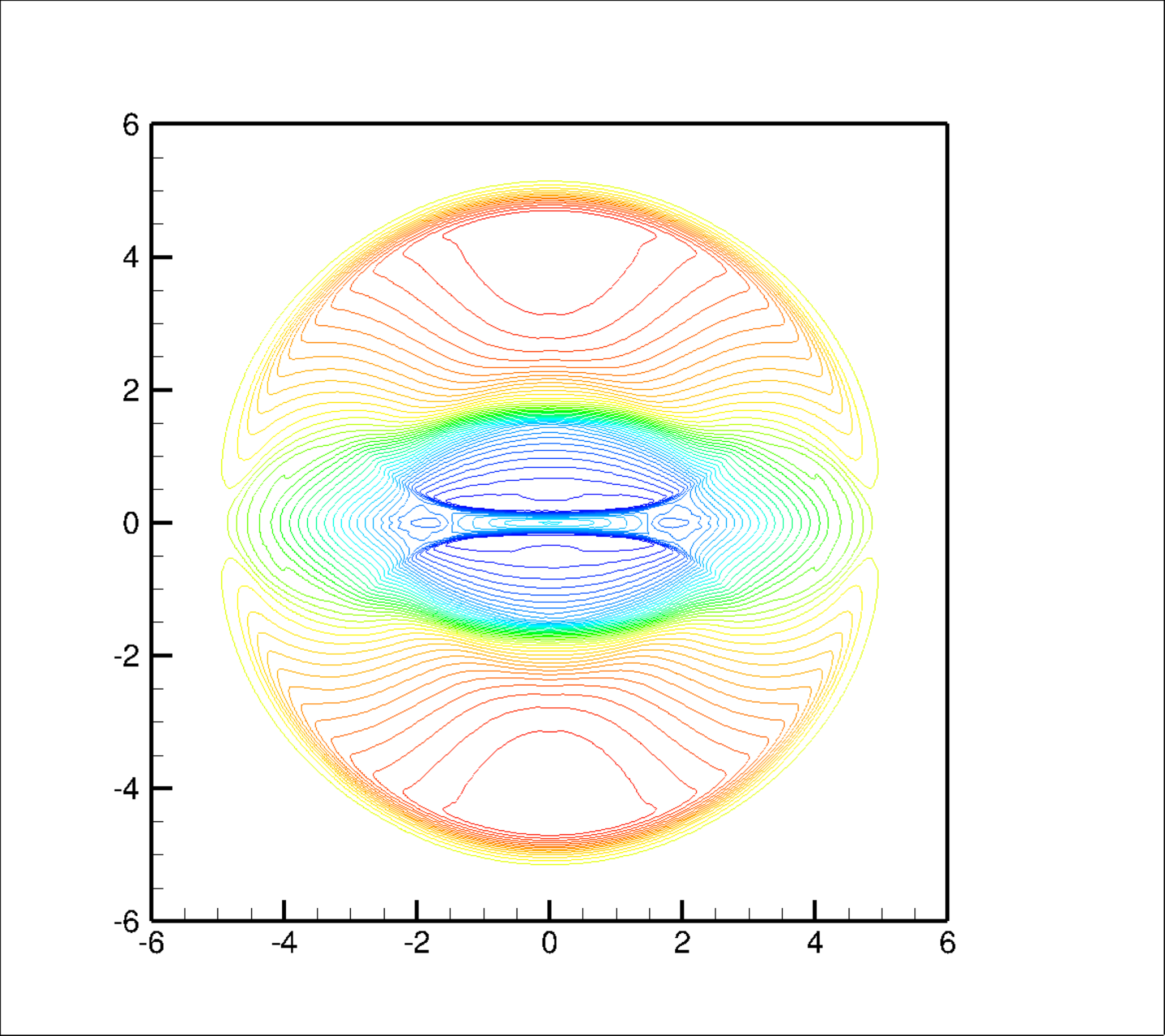}
    \caption{$|\bm{B}|$}
  \end{subfigure}
  \caption{Same as Figure \ref{fig:2DBlast} except for
  the initial $\Bx=0.5$.}
  \label{fig:2DBlast_Bx05}
\end{figure}

\begin{figure}[ht!]
  \begin{subfigure}[b]{0.5\textwidth}
    \centering
    \includegraphics[width=1.0\textwidth, trim=10 40 50 50, clip]{./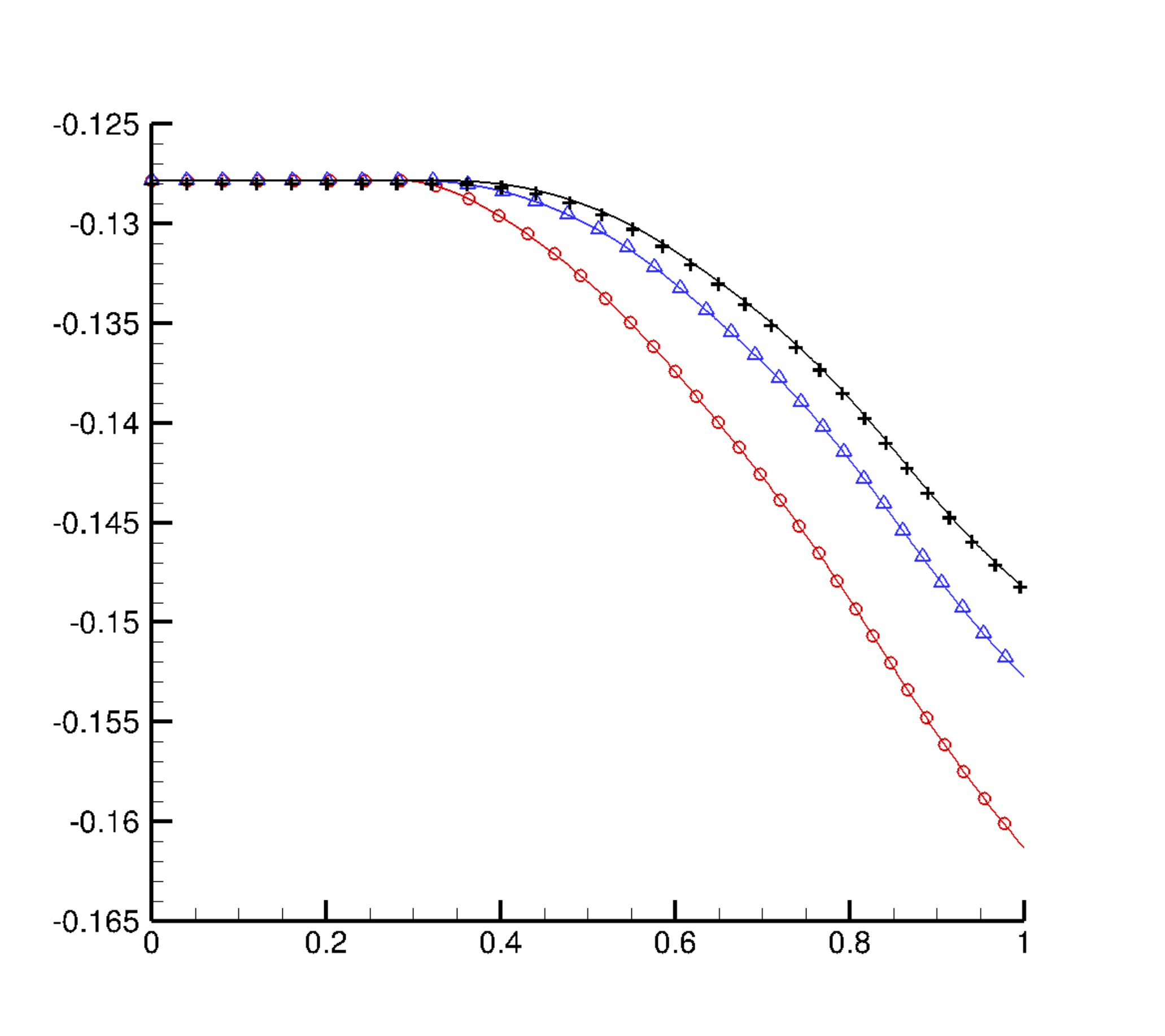}
    \caption{Example \ref{ex:OrszagTang}.}
  \end{subfigure}
  \begin{subfigure}[b]{0.5\textwidth}
    \centering
    \includegraphics[width=1.0\textwidth, trim=10 40 50 50, clip]{./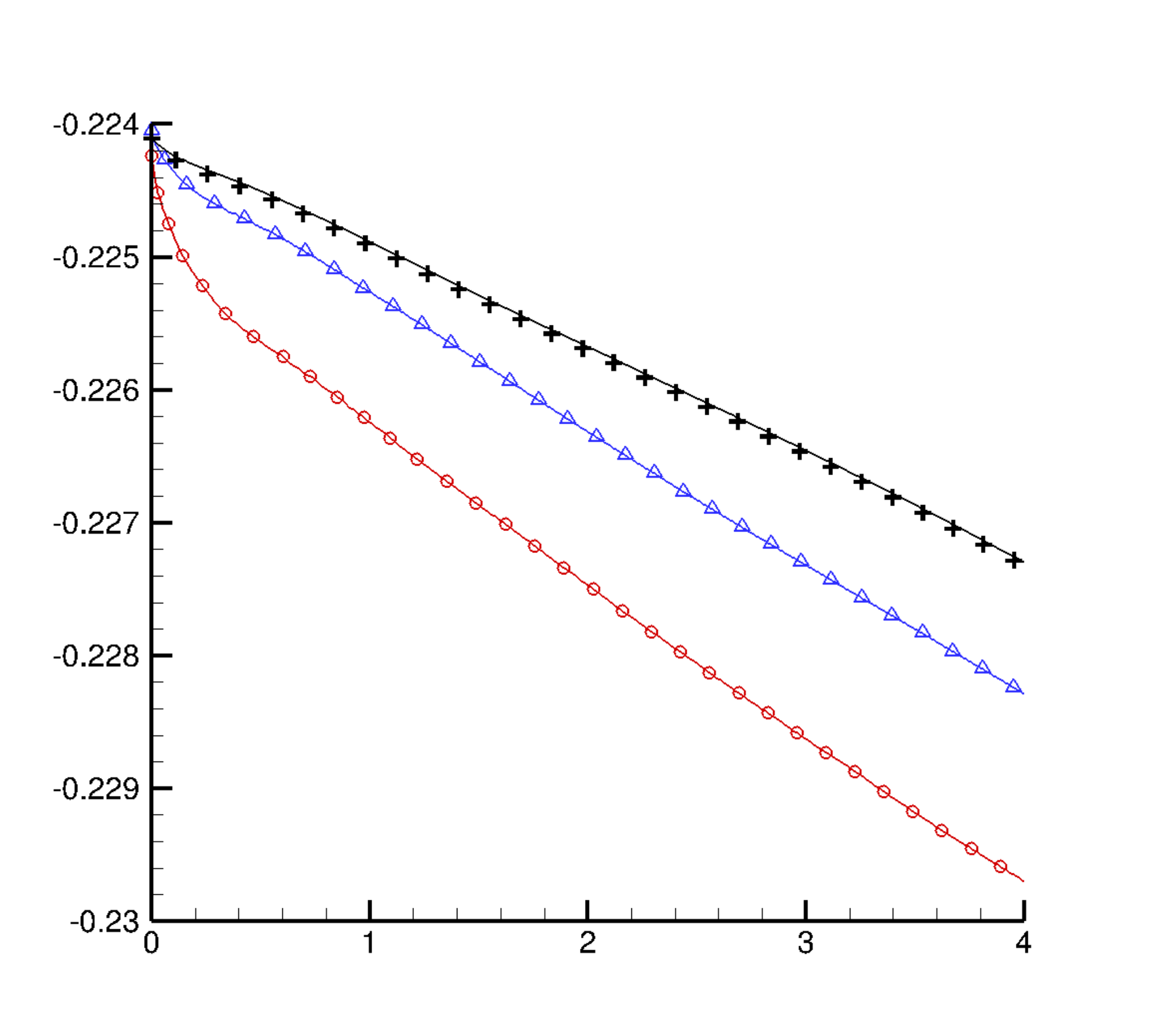}
    \caption{Example \ref{ex:2DBlast} for the initial $\Bx=0.1$.}
  \end{subfigure}
  \caption{The evolution of the total entropy obtained by using the entropy stable scheme.
    The symbols ``$\circ$'', ``$\Delta$'', and ``$+$'' with line denote the results obtained by using
  $100\times 100, 200\times 200, 400\times400$ cells, respectively.}
  \label{fig:2DTotalEntropy}
\end{figure}

\begin{example}[Shock-vortex interaction]\label{ex:SV}\rm
  This test is about the interaction between a shock wave and a vortex, which is
  constructed in \cite{Balsara2016}.
 Here we rotate the shock wave and   vortex clockwise by $\pi/4$, in order to eliminate the
  boundary effects in the computational domain.
  The present computational domain is taken as $[-9,9]^2$, {and} an  isentropic
  vortex initially centered at $(-3,0)$, similar to that in Example
  \ref{ex:vortex}, is {put},
  except for $w=-0.6\sqrt{2}$.
  A planar stationary shock wave placed at $x=2\sqrt{2}-3$ is
  initially far away from the vortex so that the pre-shock state is a constant state
  $$(\rho,\vx,\vy,\Bx,\By,p)=(6.73586072,~0.6\sqrt{2},~0,~0,~0,~24.02454458).$$
  Following \cite{Balsara2016}, the post-shock state is
  $$(\rho,\vx,\vy,\Bx,\By,p)=(10.47090373,~0.507707117\sqrt{2},~0,~0,~0,~50.44557978),$$
 when $x\geqslant 2\sqrt{2}-3$.
The problem is solved until $t=10$, but the results at $t=3.4$  will also be given
to show that the vortex is half-way through the shock wave.

Figure \ref{fig:SV} plots the contours of $\rho,W,|\bm{B}|$ at $t=3.4, 10$
with $40$ equally spaced contour lines  obtained by using our
entropy stable scheme with $N_x=N_y=600$ and the TVB limiter parameter $M=10$.
Those results show that the shock wave is still located at $x=2\sqrt{2}-3$ after
the interaction of the vortex and the shock wave, {and} our entropy stable scheme can capture
the complicated structures of the solutions. They are very similar to those
in \cite{Balsara2016}, even though our result in the center of the vortex is not as smooth as that in \cite{Balsara2016}.
\end{example}
\begin{figure}[ht!]
    \centering
  \begin{subfigure}[b]{0.32\textwidth}
    \centering
    \includegraphics[width=1.0\textwidth, trim=55 40 110 60, clip]{./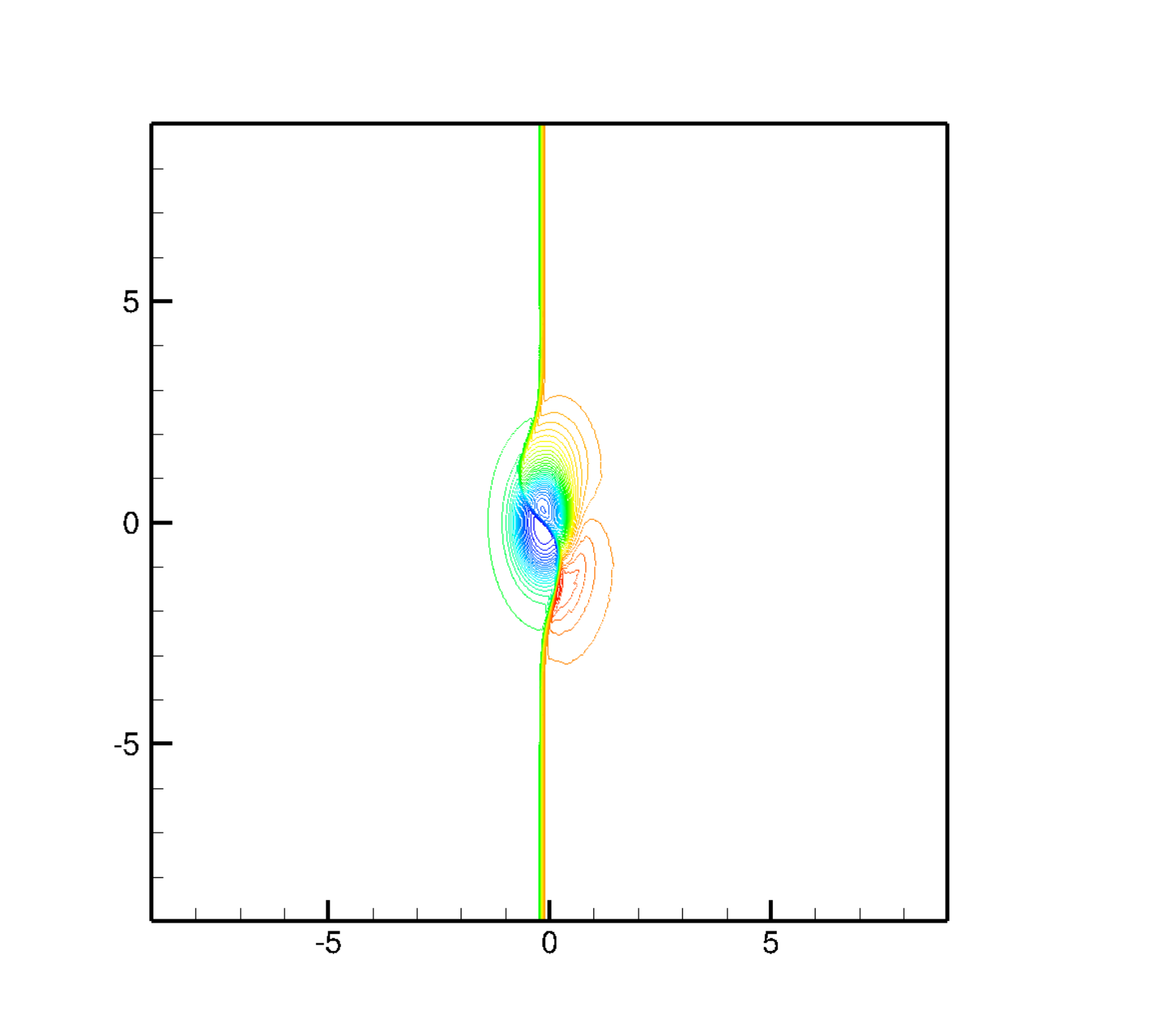}
    \caption{$\rho$}
  \end{subfigure}
  \begin{subfigure}[b]{0.32\textwidth}
    \centering
    \includegraphics[width=1.0\textwidth, trim=55 40 110 60, clip]{./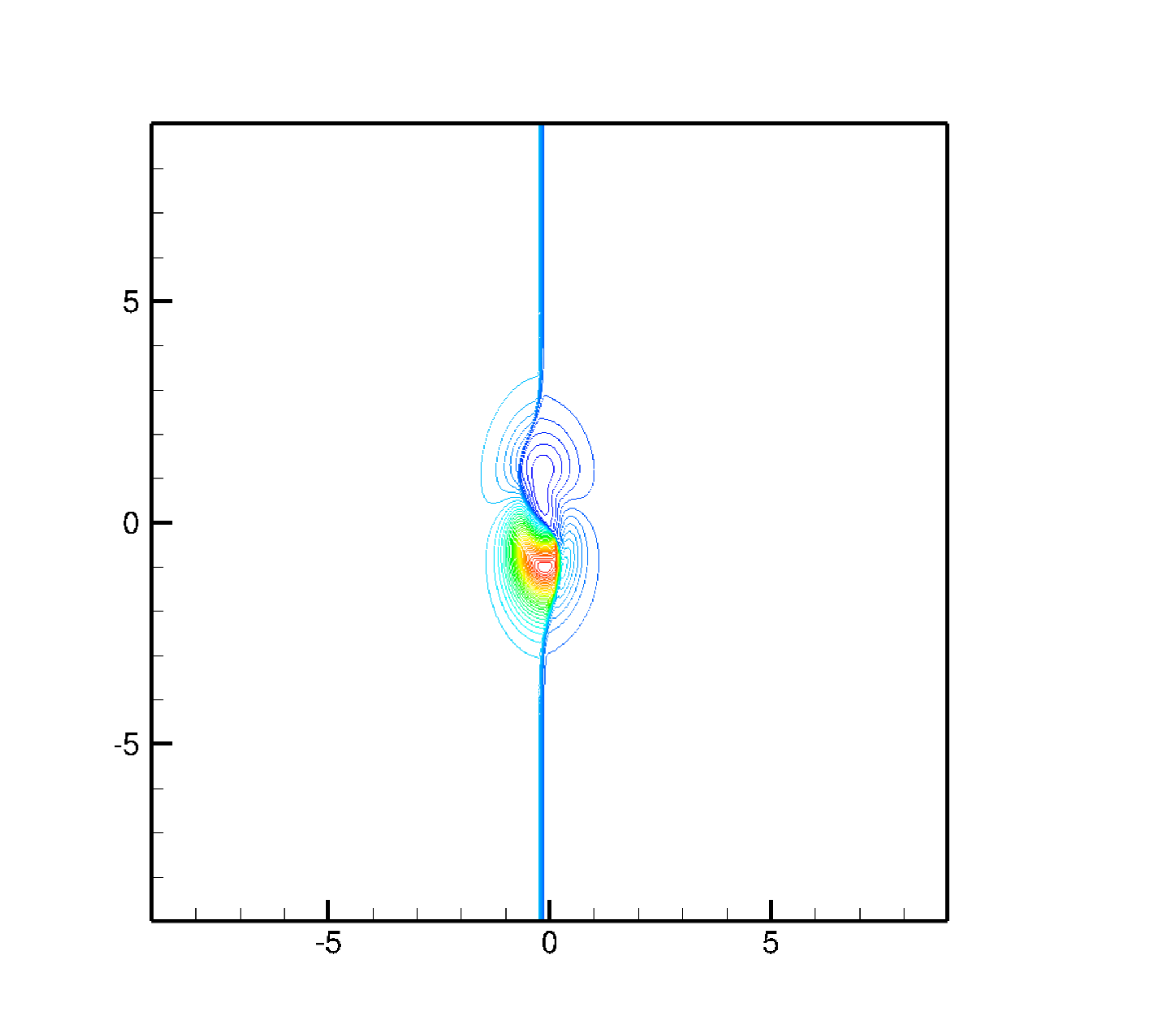}
    \caption{$W$}
  \end{subfigure}
  \begin{subfigure}[b]{0.32\textwidth}
    \centering
    \includegraphics[width=1.0\textwidth, trim=55 40 110 60, clip]{./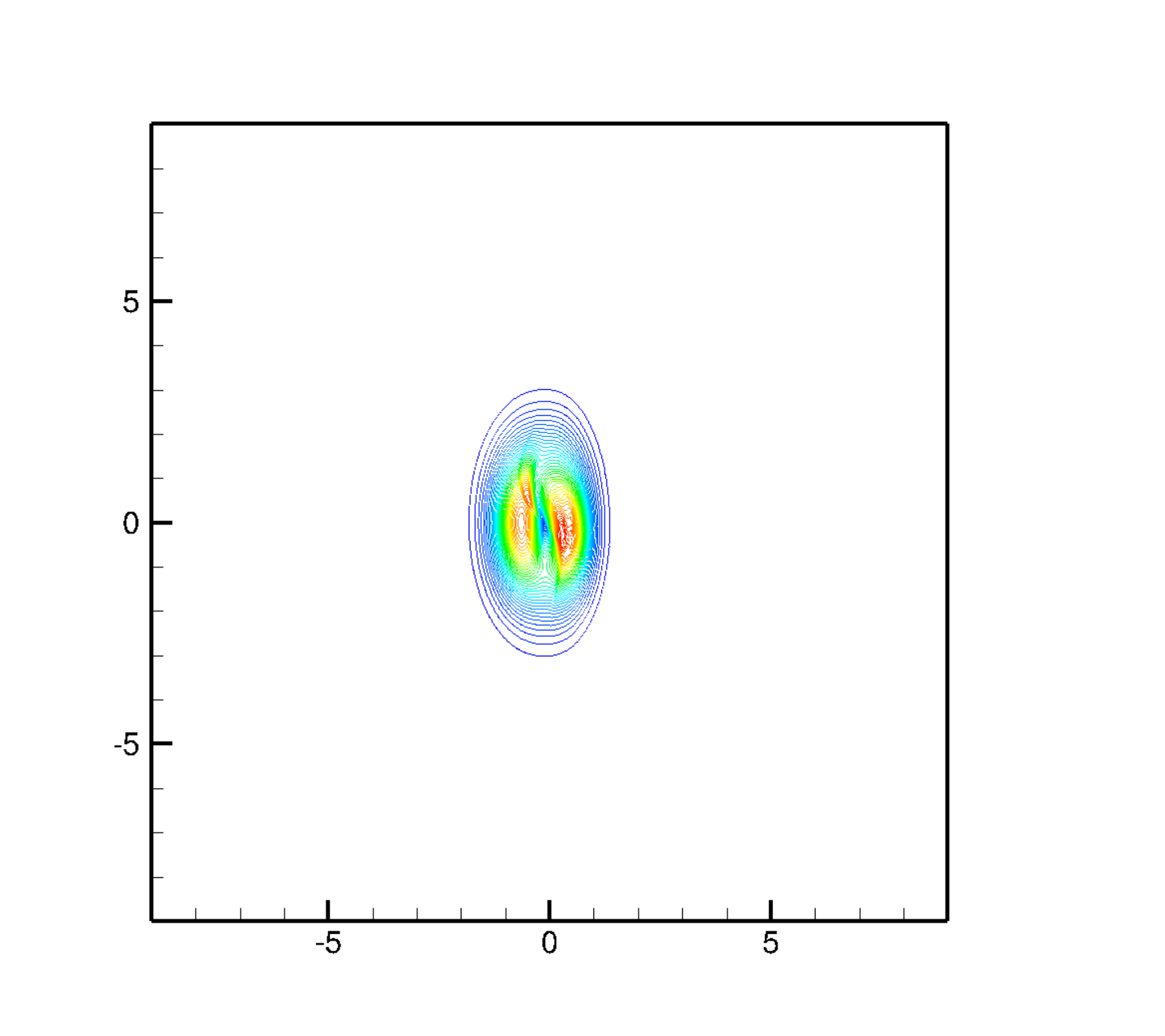}
    \caption{$|\bm{B}|$}
  \end{subfigure}
  \begin{subfigure}[b]{0.32\textwidth}
    \centering
    \includegraphics[width=1.0\textwidth, trim=55 40 110 60, clip]{./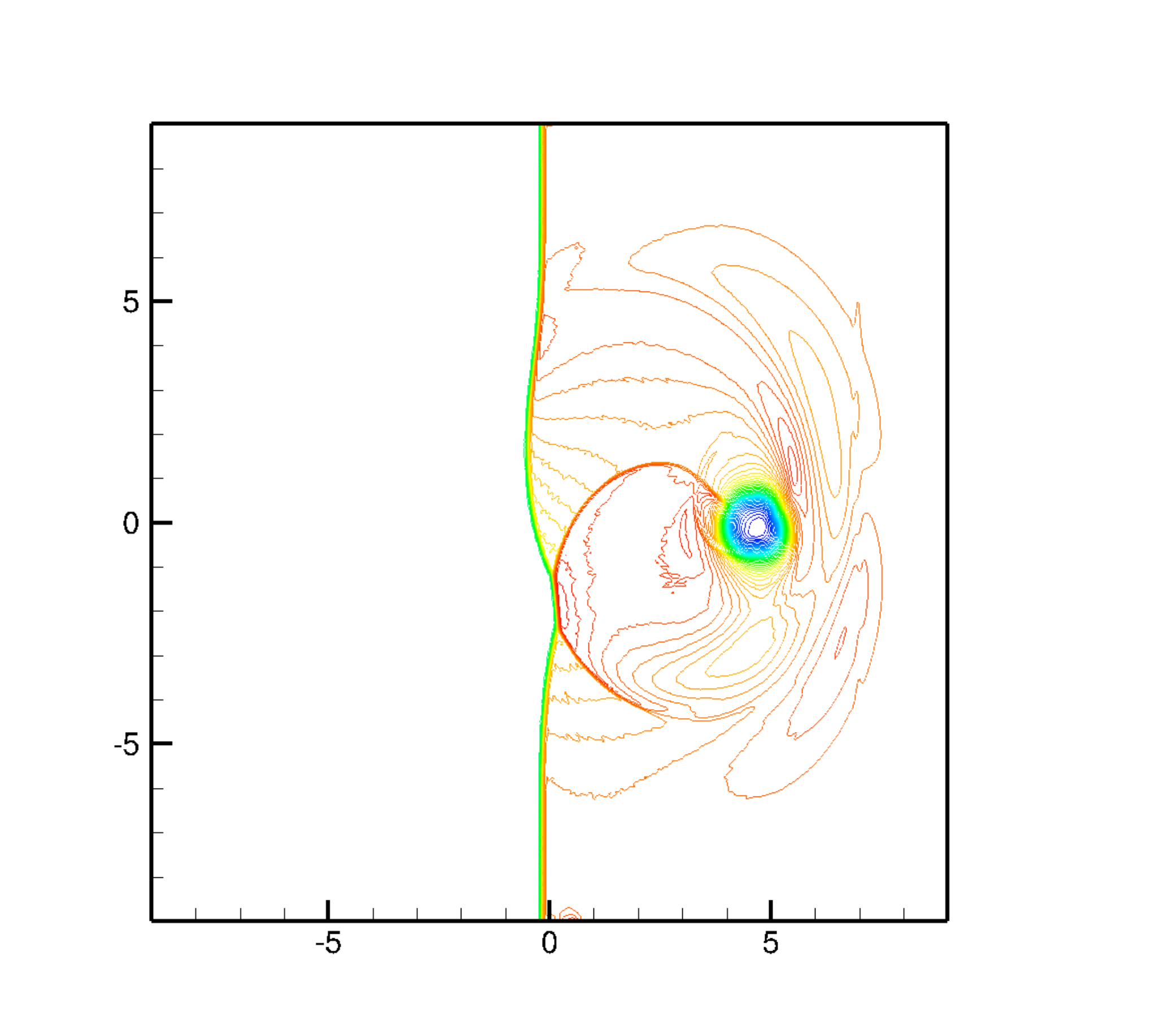}
    \caption{$\rho$}
  \end{subfigure}
  \begin{subfigure}[b]{0.32\textwidth}
    \centering
    \includegraphics[width=1.0\textwidth, trim=55 40 110 60, clip]{./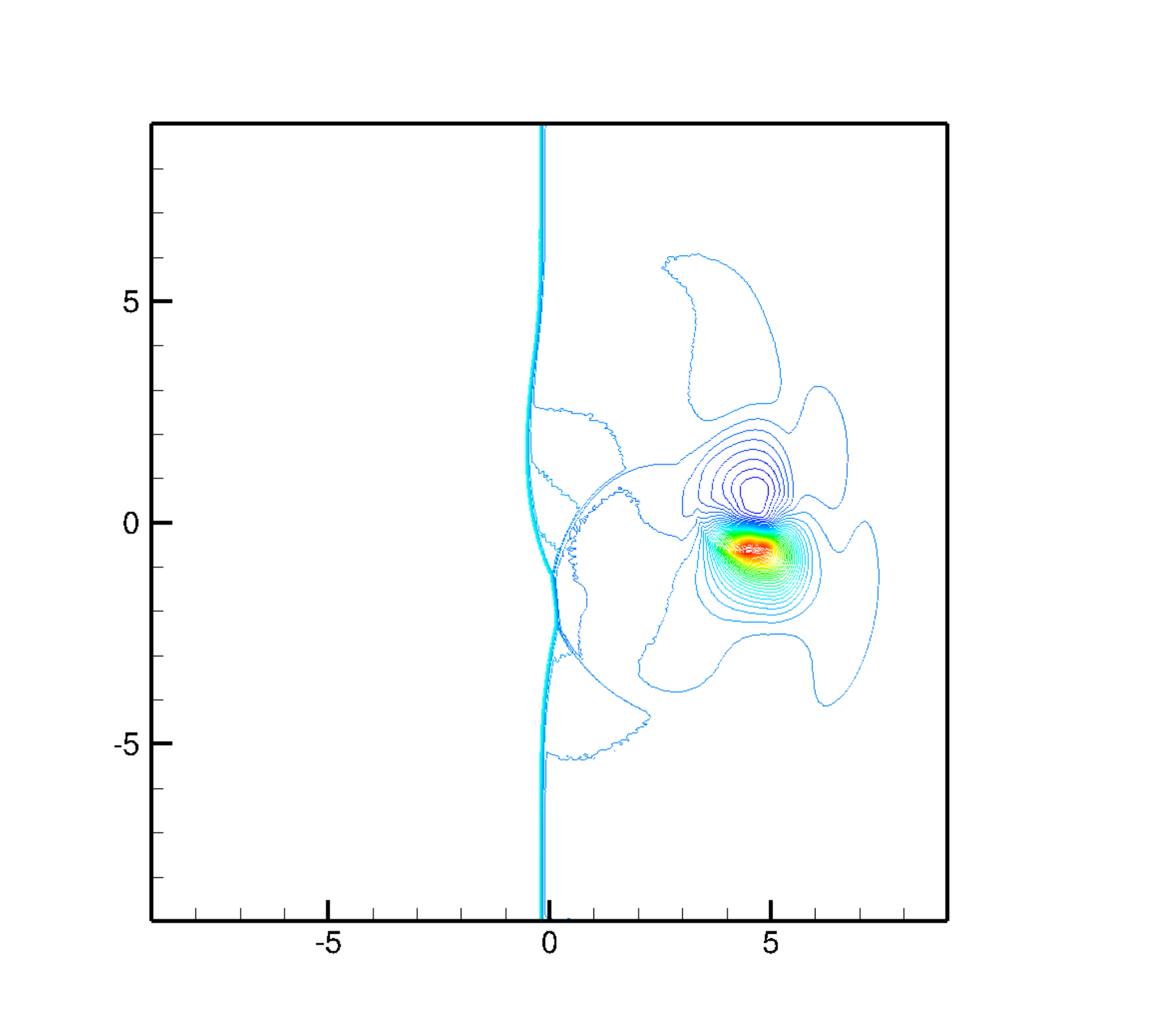}
    \caption{$W$}
  \end{subfigure}
  \begin{subfigure}[b]{0.32\textwidth}
    \centering
    \includegraphics[width=1.0\textwidth, trim=55 40 110 60, clip]{./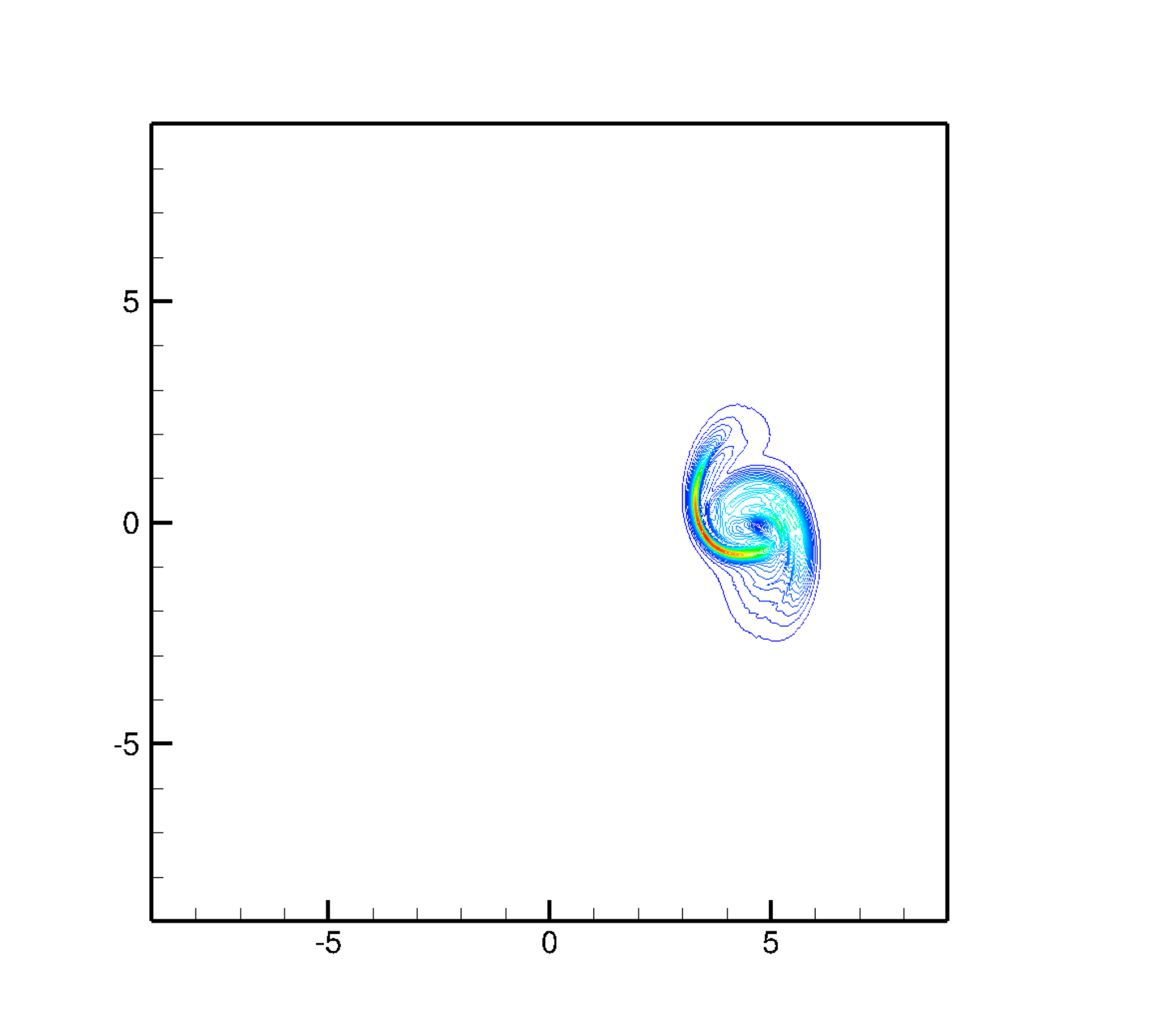}
    \caption{$|\bm{B}|$}
  \end{subfigure}
  \caption{Example \ref{ex:SV}: Numerical solutions at $t=3.4$ (first row)
  and $10$ (second row)
with $40$ equally spaced contour lines obtained
    by using the entropy stable scheme with $N_x=N_y=600$.}
  \label{fig:SV}
\end{figure}

\subsection{Comparison between two entropy conservative fluxes}
This section presents a numerical comparison of our two-point entropy conservative flux
and that in \cite{Wu2019}.
As stated in Section \ref{section:OneD}, the parallel magnetic component of our entropy
conservative flux is zero so that it is ``fully consistent'' with the physical flux.
However, the parallel magnetic component of the entropy conservation flux in \cite{Wu2019}
is not zero, thus one can expect that the entropy stable DG scheme with
our entropy conservative flux will behave better.

In the following, we take  three 1D Riemann problems in Examples \ref{ex:RP1}-\ref{ex:RP3} and a rotated
shock tube problem as examples.

Figure \ref{fig:Bx_cmp} shows the $\Bx$ components for three 1D Riemann problems
obtained by the entropy stable DG scheme using our two-point entropy
conservative flux \eqref{eq:ecFlux1D1}-\eqref{eq:ecFlux1D3} and the flux (3.6)
in \cite{Wu2019}, respectively.
The other components are nearly the same so that they are omitted  here
and the only difference between two schemes is just in the two-point entropy
conservative flux.
The solid lines and the symbol (``$\square$'')
denote the numerical solutions obtained
by using the flux \eqref{eq:ecFlux1D1}-\eqref{eq:ecFlux1D3} and the flux
(3.6) in \cite{Wu2019}, respectively.
We can see that, for those  Riemann problems,
the results with the present two-point entropy conservative flux can
   maintain $\Bx$ invariant exactly, while others do not have this property,
   and maintaining a zero parallel magnetic component
is useful to reduce the error in the parallel magnetic component.

\begin{figure}[ht!]
  \begin{subfigure}[b]{0.32\textwidth}
    \centering
    \includegraphics[width=1.0\textwidth, trim=10 0 50 30, clip]{./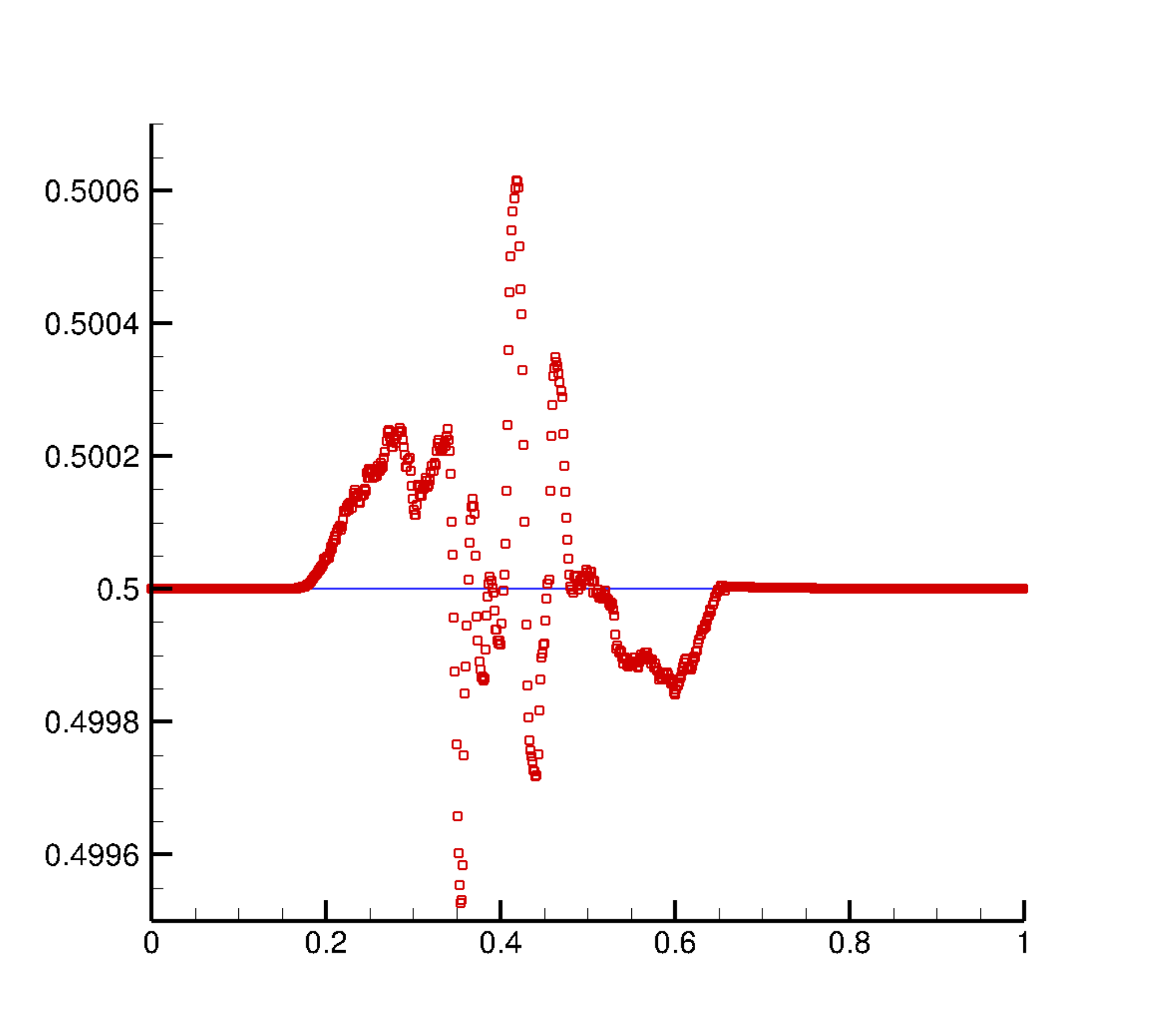}
    \caption{Example \ref{ex:RP1}}
  \end{subfigure}
  \begin{subfigure}[b]{0.32\textwidth}
    \centering
    \includegraphics[width=1.0\textwidth, trim=10 0 50 30, clip]{./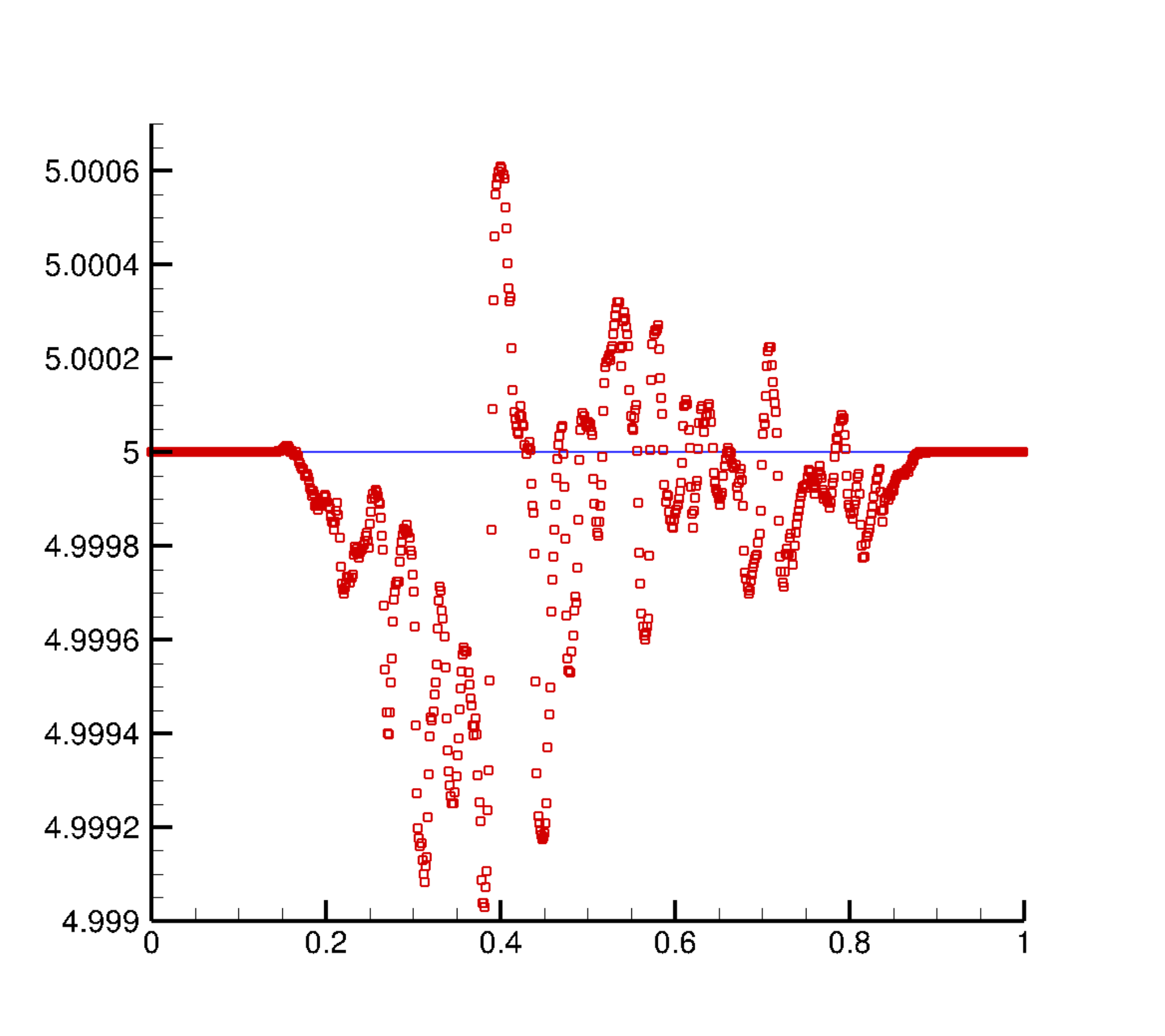}
    \caption{Example \ref{ex:RP2}}
  \end{subfigure}
  \begin{subfigure}[b]{0.32\textwidth}
    \centering
    \includegraphics[width=1.0\textwidth, trim=10 0 50 30, clip]{./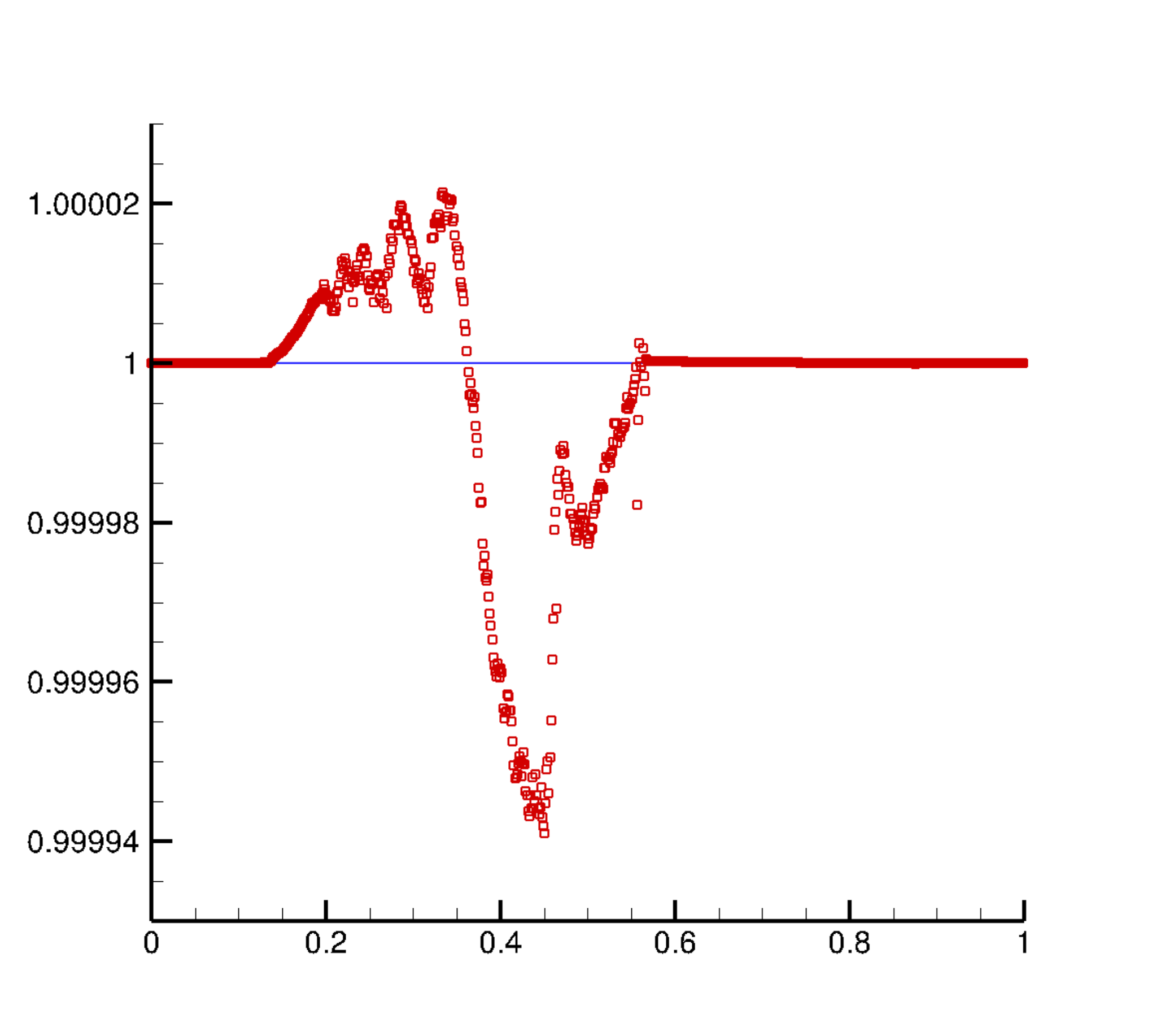}
    \caption{Example \ref{ex:RP3}}
  \end{subfigure}
  \caption{Examples \ref{ex:RP1}-\ref{ex:RP3}: The solid line and the symbol (``$\square$'') denote the numerical solutions obtained
    by using the flux  \eqref{eq:ecFlux1D1}-\eqref{eq:ecFlux1D3}  and the flux (3.6) in \cite{Wu2019},
  respectively.}
  \label{fig:Bx_cmp}
\end{figure}

\begin{example}[Rotated shock tube]\label{ex:RST} \rm
  Following \cite{Liu2018,Toth2000},  this rotated shock tube problem is designed to compare
  the results of the entropy stable DG schemes with different two-point entropy
  conservative fluxes. The initial left and right states are
  $(\rho,v_\parallel,v_\perp,B_\parallel,B_\perp,p)=(1,~0.5,~0,~0.5,~0.5,~1)$
  and  $(1,-0.5,~0,~0.5,~0.5,~0.1)$, respectively.
  A $800\times 2$  Cartesian mesh with $\Delta x=\Delta y=1/800$ is used.
  The top and bottom boundaries are translational
  symmetry in the $(-1,1)$ direction, while
  the left and right boundary conditions  are specified by the initial conditions,
  in view of the fact that  the waves do not reach those boundaries  at the output time $t=0.4$.

Figure \ref{fig:RST} plots the numerical solutions obtained by using the entropy stable DG schemes, where the symbols ``$\circ$'' and
``$+$'' denote the numerical solutions obtained by using the flux
\eqref{eq:ecFlux1D1}-\eqref{eq:ecFlux1D3}  and the flux (3.6) in \cite{Wu2019},
respectively, and the solid line  denotes the reference solution  obtained by using a 1D
first-order finite volume scheme on a mesh of $20000$ cells.
Our numerical solutions are in good agreement with the
reference solutions, except that the parallel component of the magnetic field
$B_\parallel$ is not a constant due to the non-conservative source term, which
can also be seen in \cite{Liu2018,Toth2000}.
In this example, the schemes with two entropy conservative fluxes give similar
results and the large error is shown in $B_\parallel$  when $\divB$ is not zero.
From this example, we can see that the error in $B_\parallel$ mainly results from the
error in $\divB$, which dominates the error in the
non-zero parallel component of the two-point entropy conservative flux, thus we
almost cannot distinguish the results in $B_\parallel$ in Figure \ref{fig:RST}.
In summary, the newly resulting two-point entropy conservative flux may serve as a
better base of the entropy conservative or   stable schemes for the RMHD
equations  since it gives better or at least comparable results.
\end{example}
\begin{figure}[ht!]
  \begin{subfigure}[b]{0.32\textwidth}
    \centering
    \includegraphics[width=1.0\textwidth, trim=40 0 50 30, clip]{./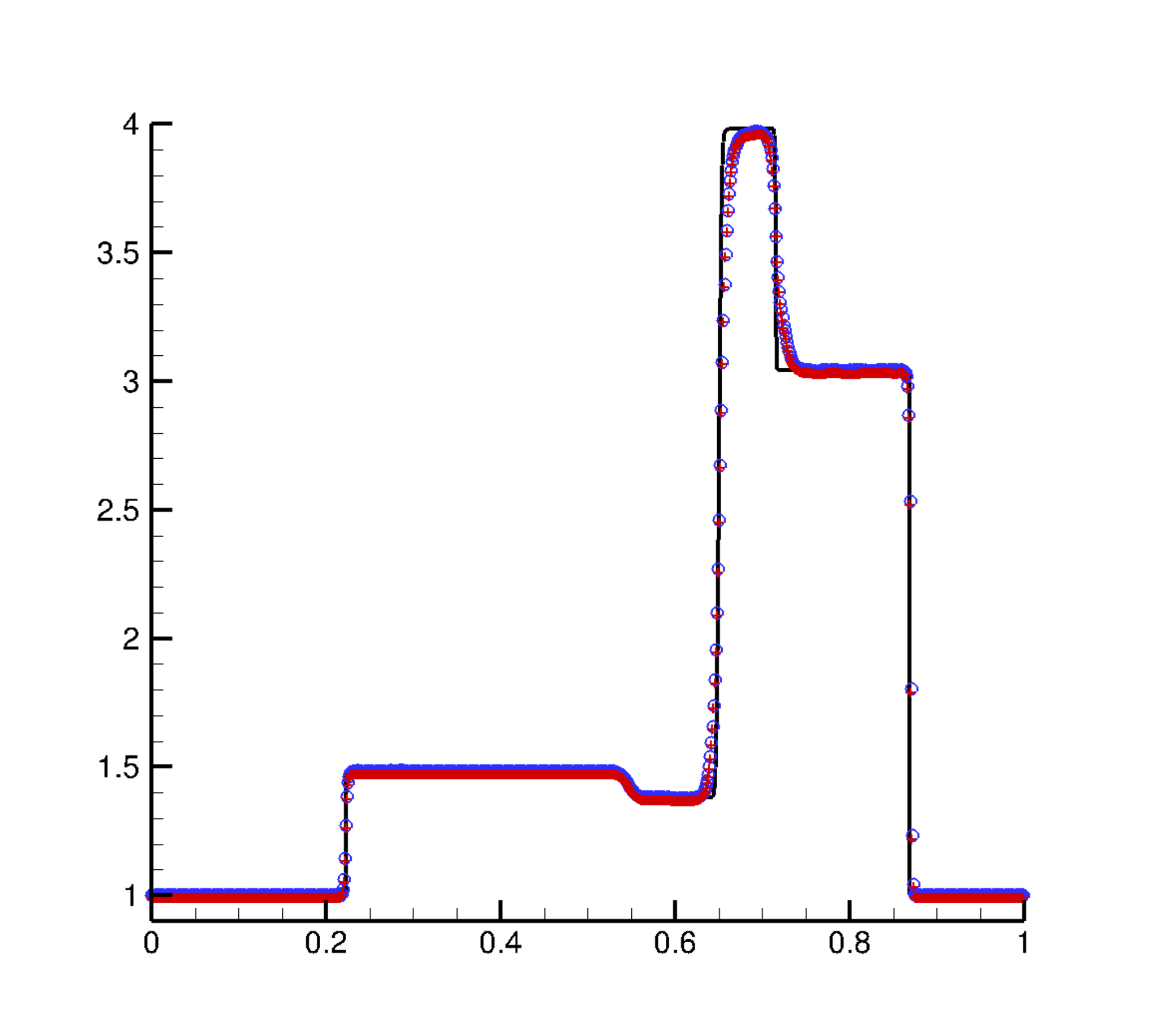}
    \caption{$\rho$}
  \end{subfigure}
  \begin{subfigure}[b]{0.32\textwidth}
    \centering
    \includegraphics[width=1.0\textwidth, trim=40 0 50 30, clip]{./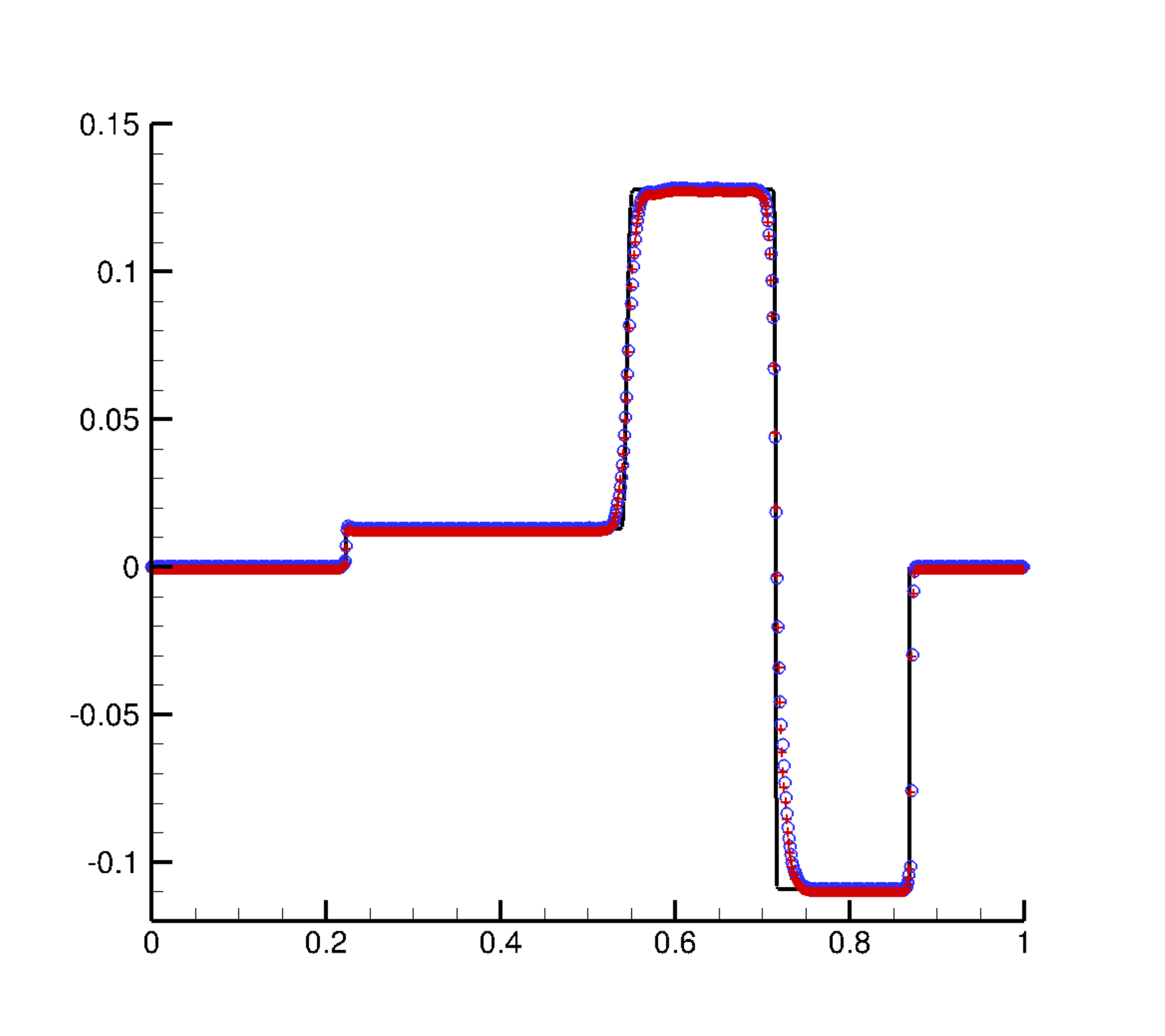}
    \caption{$v_\perp$}
  \end{subfigure}
    \begin{subfigure}[b]{0.32\textwidth}
    \centering
    \includegraphics[width=1.0\textwidth, trim=40 0 50 30, clip]{./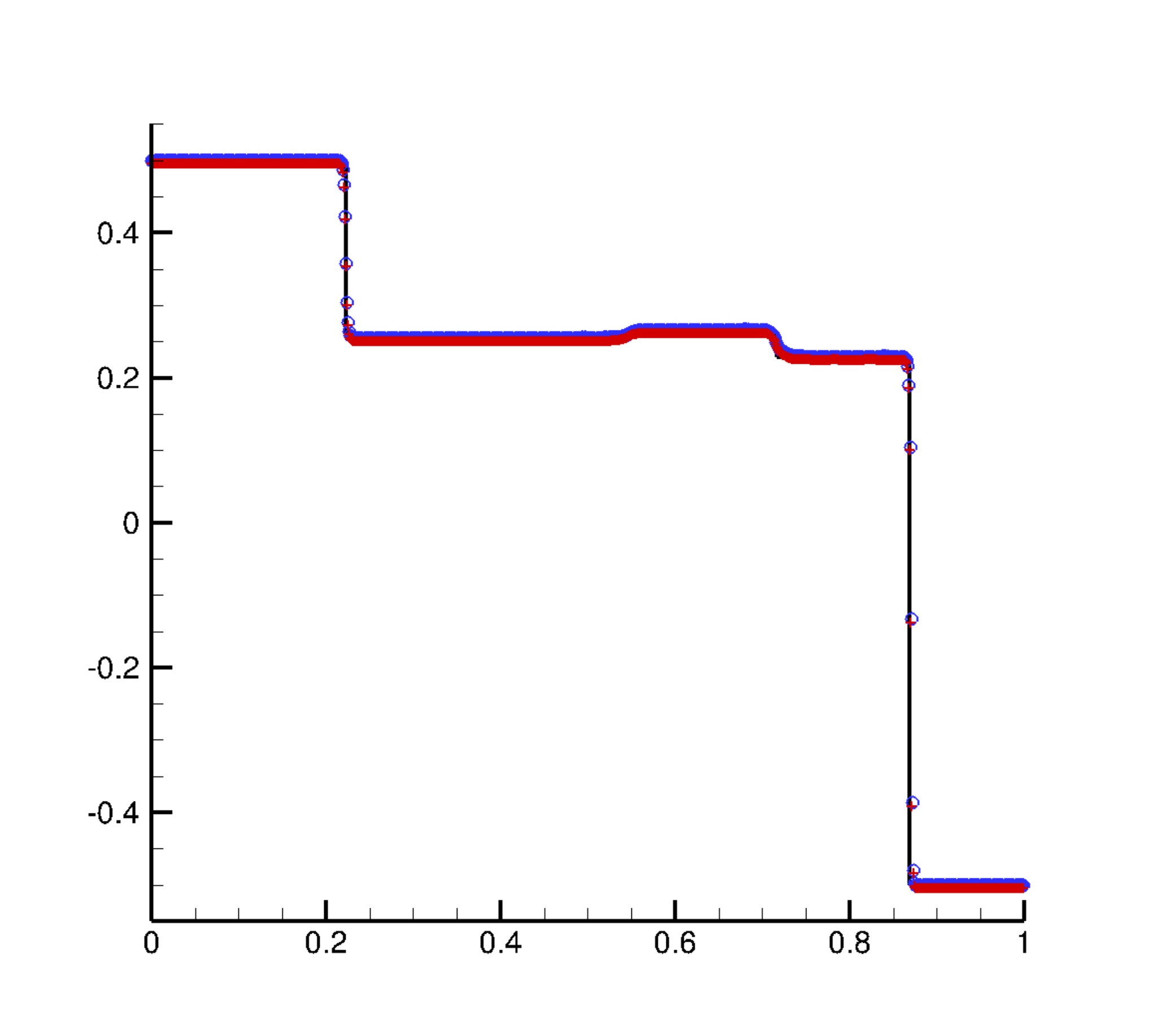}
    \caption{$v_\parallel$}
  \end{subfigure}
    \begin{subfigure}[b]{0.32\textwidth}
    \centering
    \includegraphics[width=1.0\textwidth, trim=40 0 50 30, clip]{./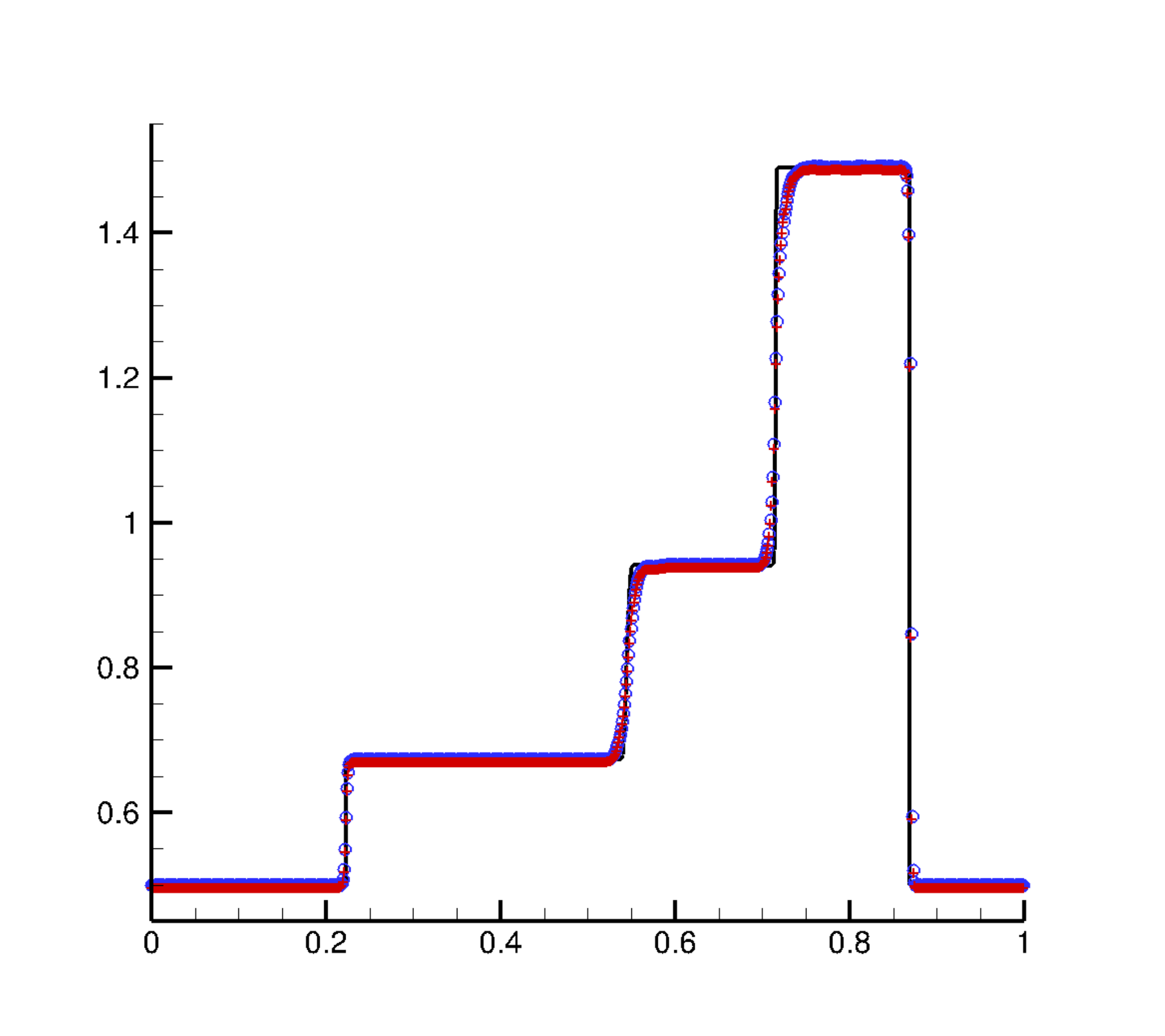}
    \caption{$B_\perp$}
  \end{subfigure}
  \begin{subfigure}[b]{0.32\textwidth}
    \centering
    \includegraphics[width=1.0\textwidth, trim=40 0 50 30, clip]{./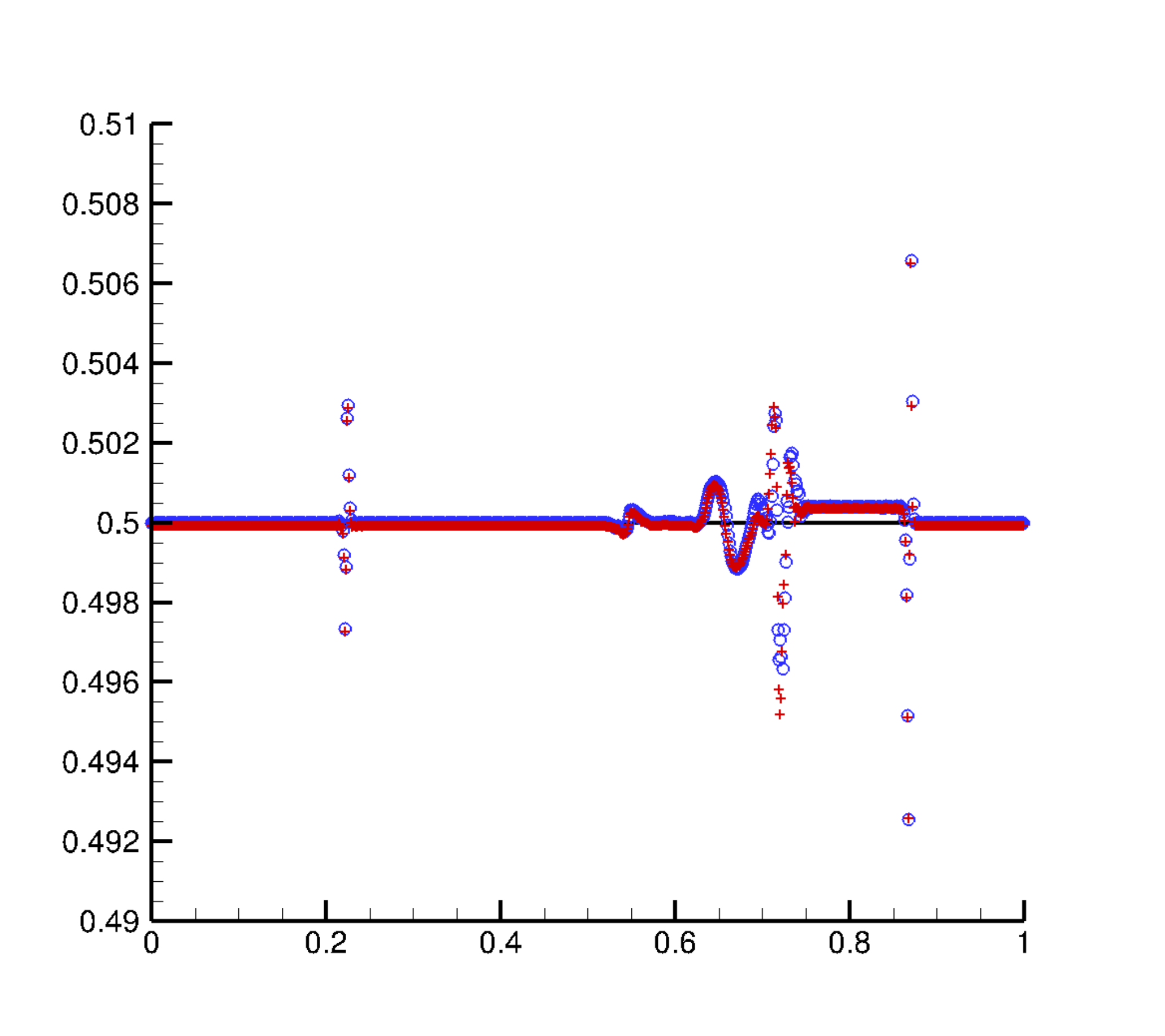}
    \caption{$B_\parallel$}
  \end{subfigure}
  \begin{subfigure}[b]{0.32\textwidth}
    \centering
    \includegraphics[width=1.0\textwidth, trim=40 0 50 30, clip]{./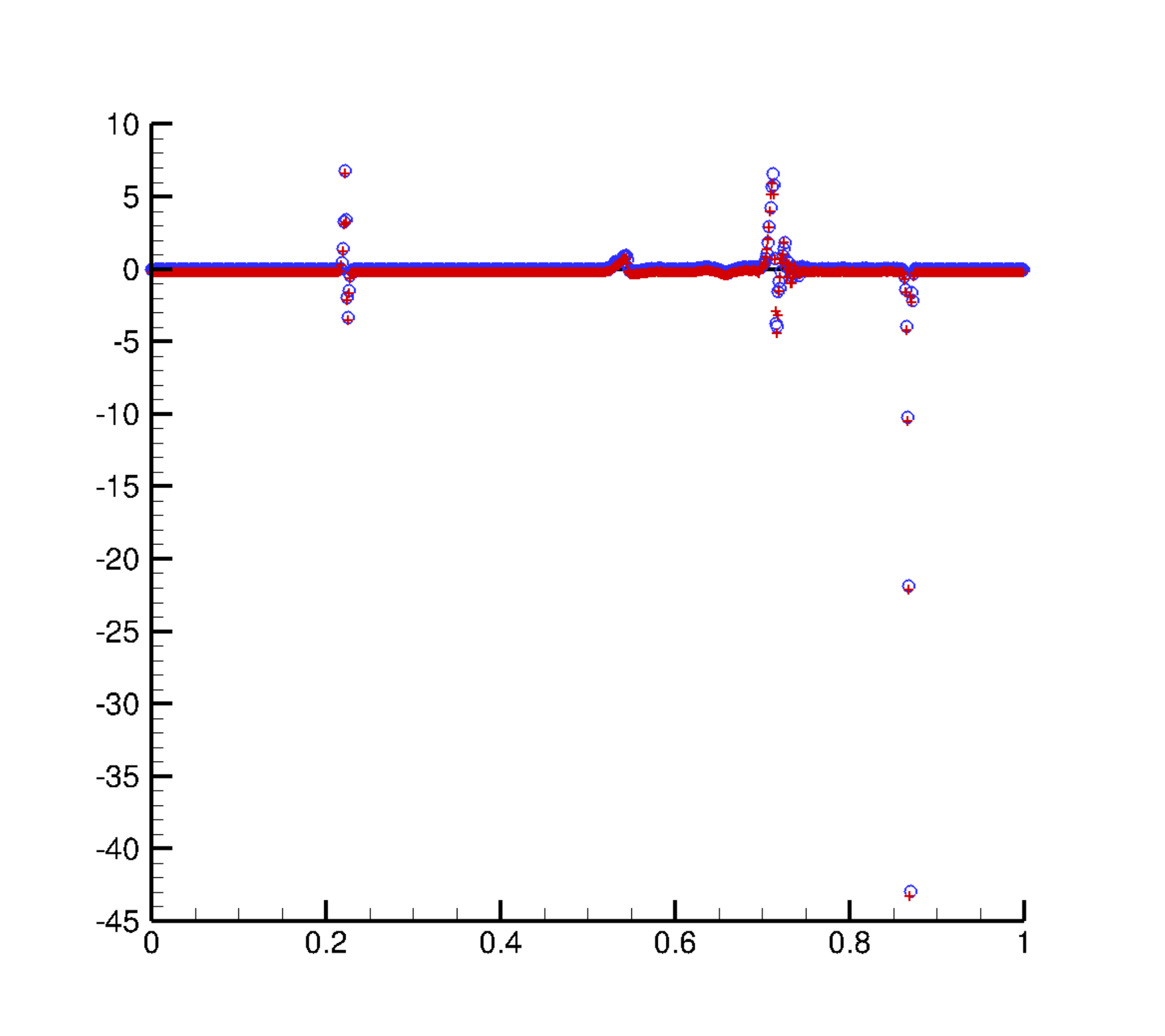}
    \caption{$\divB$}
  \end{subfigure}
  \caption{Example \ref{ex:RST}: The symbols ``$\circ$'' and
    ``$+$'' denote the numerical solutions obtained by using the flux
    \eqref{eq:ecFlux1D1}-\eqref{eq:ecFlux1D3} and the flux (3.6) in \cite{Wu2019},
    respectively. The solid line is the reference solution.}
  \label{fig:RST}
\end{figure}

\section{Conclusion}\label{section:Conclusion}
This paper has presented the high-order accurate entropy stable nodal
DG schemes for the ideal special RMHD equations.
The conservative RMHD equations we usually considered
cannot be symmetrized, thus a particular source term is added into the RMHD
equations to achieve the symmetrization of the RMHD equations,
and the corresponding convex entropy pair is found to symmetrize the modified
RMHD equations. For the modified RMHD equations, high-order entropy
stable DG schemes based on suitable quadrature rules are constructed to
satisfy the semi-discrete entropy inequality for the given entropy pair.
One key is to technically construct the affordable two-point entropy
conservative flux, which is used inside each cell.
Our two-point entropy conservative flux also maintains the zero parallel
magnetic component, which is shown to be useful to reduce the errors in the
parallel magnetic component in several one-dimensional Riemann problems,
while in two dimensions, the entropy stable DG schemes with our two-point
entropy conservative flux and with the existing two-point entropy conservative flux
give comparable results in the rotated shock tube test, thus
our newly derived entropy conservative flux may serve as a better
base of the entropy conservative or the entropy stable schemes for the RMHD equations.
At the cell interfaces, the entropy stable fluxes are used, resulting in an
entropy stable DG schemes satisfying the semi-discrete entropy inequality.
The semi-discrete schemes are integrated in time by using the high-order
explicit Runge-Kutta schemes.
Extensive numerical tests are conducted to validate the accuracy and the ability
to capture discontinuities of our entropy stable DG schemes.

\section*{Acknowledgments}
The authors were partially supported by the Special Project on High-performance Computing under the
National Key R\&D Program (No. 2016YFB0200603), Science Challenge Project (No. TZ2016002),
the National Natural Science Foundation of China (Nos. 91630310 \& 11421101),
and High-performance Computing Platform of Peking University.


\end{document}